\documentclass[11pt,reqno]{amsart}
\usepackage[T1]{fontenc}
\usepackage{lmodern,microtype}
\usepackage[a4paper,margin=27mm]{geometry}
\usepackage{amsmath,amssymb,amsthm,mathtools,mathrsfs}
\usepackage{enumitem}
\usepackage{xcolor}
\usepackage[bookmarksnumbered,colorlinks=true,
  linkcolor=blue,citecolor=blue,urlcolor=blue]{hyperref}
\hypersetup{
  pdftitle={Infinite cascades for the periodic defocusing cubic half-wave equation},
  pdfsubject={Infinite-time shadowing, exponential Sobolev growth, and concentration}
}
\numberwithin{equation}{section}
\newtheorem{theorem}{Theorem}[section]
\newtheorem{lemma}[theorem]{Lemma}
\newtheorem{proposition}[theorem]{Proposition}
\newtheorem{corollary}[theorem]{Corollary}

\theoremstyle{definition}

\theoremstyle{remark}

\newcommand{\T}{\mathbb T}
\newcommand{\N}{\mathbb N}
\newcommand{\W}{\mathcal W}
\newcommand{\PP}{\Pi_+}
\newcommand{\PM}{\Pi_-}
\DeclareMathOperator{\sech}{sech}
\setlist[enumerate]{label=\textup{(\roman*)},leftmargin=2em}
\title[Infinite cascades for the cubic half-wave equation]{Infinite cascades for the defocusing cubic half-wave equation on the torus}
\author{Xi Chen}
\date{}
\address{Department of Mathematics and Computer Science, University of Basel, Spiegelgasse 1, 4051 Basel, Switzerland}
\email{xi01.chen@unibas.ch}
\subjclass[2020]{35B40, 35Q55, 37K55}
\keywords{Half-wave equation, Sobolev norm growth, cubic Szeg\H{o} equation, concentration, infinite-time shadowing}

\begin{document}
\begin{abstract}
We construct global solutions of the periodic defocusing cubic half-wave
equation whose $H^s$ norms grow exponentially as $t\to+\infty$ for every
$1/2<s<3/2$. Such solutions exist at every positive mass. The construction
starts from explicit concentrating trajectories of a linearly perturbed
Szeg\H{o} equation and solves an infinite-time final-value problem for
the full half-wave flow, including its negative-frequency component.
We obtain the exact growth rate and conserved quantities. Finite
Blaschke products allow us to prescribe finitely many concentration
points and their relative widths, and we compute the limiting critical
kinetic-energy measures. We also identify the endpoint Fourier decay
of the constructed data and prove the sharp logarithmic regularity
threshold on the resonant mass slice and for generic scalar parameters. In particular, to the best of our knowledge, this provides the first example of a genuine infinite cascade for a defocusing fractional nonlinear Schrödinger equation on the torus.
\end{abstract}
\maketitle
\setcounter{tocdepth}{1}
\tableofcontents

\section{Introduction}\label{sec:introduction}
We study the defocusing cubic half-wave equation
\begin{equation}\label{eq:hw}
 i\partial_tu=|D|u+|u|^2u,
 \qquad (t,x)\in\mathbb R\times\T,
 \qquad D=-i\partial_x,
\end{equation}
on $\T=\mathbb R/(2\pi\mathbb Z)$. Here $|D|$ is the Fourier multiplier
with symbol $|n|$, $n\in\mathbb Z$. We normalize circle measure to have
total mass one. The conserved mass, momentum, and energy are
\begin{equation}\label{intro:invariants}
 \begin{split}
 M(u)&=\int_\T|u|^2\,\frac{dx}{2\pi},\qquad
 P(u)=\sum_{n\in\mathbb Z}n|\widehat u(n)|^2,\\
 E(u)&=\frac12\sum_{n\in\mathbb Z}|n||\widehat u(n)|^2
             +\frac14\int_\T|u|^4\,\frac{dx}{2\pi}.
 \end{split}
\end{equation}
Mass and energy bound the $H^{1/2}$ norm, but leave open the possibility
of growth in stronger Sobolev spaces. The solutions constructed here
exhibit this behavior for all sufficiently large positive times, not
just at a sequence of selected times. We call such a trajectory an
\emph{infinite cascade}.

\subsection{Main result and the structure of the solutions}
Our simplest conclusion can be stated without introducing the
parameters of the concentrating profile.

\begin{theorem}\label{intro:allmass}
For every $M_0>0$, equation~\eqref{eq:hw} has a global solution
\[
 u\in\bigcap_{r<3/2}C(\mathbb R;H^r(\T))
\]
with conserved quantities
\[
 M(u)=M_0,\qquad P(u)=2,\qquad E(u)=(1+M_0/2)^2.
\]
For this same solution and every $1/2<s<3/2$, there is a constant
$c_{M_0,s}>0$ such that
\[
 \|u(t)\|_{H^s}\sim c_{M_0,s}
       e^{\sqrt{2M_0}(2s-1)t}
       \qquad\text{as }t\to+\infty.
\]
\end{theorem}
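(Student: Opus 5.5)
The plan follows the outline in the abstract. First we build an explicit concentrating trajectory of a linearly perturbed cubic Szeg\H{o} equation on the positive-frequency Hardy space. Then we solve an infinite-time final-value problem for the full half-wave flow~\eqref{eq:hw} around it, and finally we read off the conserved quantities and the growth rate from the profile.

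\emph{Step 1 (profile).} Write $u=\PP u+\PM u$. On $L^2_+$ the positive part obeys $i\partial_t u_+= Du_+ + \PP(|u_+|^2u_+)+\dots$. We gauge out the phase and look for a solution of the effective equation $i\partial_t v=Dv+\PP(|v|^2v)$ in the rational family
\[
 v(t,x)=b(t)+\frac{c(t)e^{ix}}{1-p(t)e^{ix}},\qquad |p|<1 .
\]
This family is invariant under the Szeg\H{o} flow perturbed by $D$. The reduced ODE in $(b,c,p)$ has conserved quantities $M$, $P$ and the Hamiltonian. We restrict to the level set $M=M_0$, $P=2$, whose energy is forced to be $E=(1+M_0/2)^2$; this value should come out as the resonant slice. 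On this slice we find an explicit trajectory with $1-|p(t)|\sim C e^{-2\sqrt{2M_0}\,t}$. The rate should follow by linearizing the reduced system at the boundary point $|p|=1$, where the eigenvalue is $\sqrt{2M_0}$ (the factor $2$ comes from squaring). Since $\|(1-pe^{ix})^{-1}\|_{H^s}^2\sim c_s(1-|p|)^{-(2s-1)}$ for $s>1/2$, we get $\|v(t)\|_{H^s}\sim c\,e^{\sqrt{2M_0}(2s-1)t}$. The bound $s<3/2$ is needed later for error control.

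\emph{Step 2 (final-value problem).} We set $u=v+w$, where $w$ contains the negative-frequency component and the nonresonant corrections. We solve the Duhamel equation backward from $t=+\infty$:
\[
 w(t)=i\int_t^\infty e^{-i(t-\tau)|D|}\bigl[\mathcal N(v+w)-\mathcal N_{\rm eff}(v)\bigr](\tau)\,d\tau,
\]
and close a contraction in a space with weighted norm $\sup_{t\ge T}e^{\delta t}\|w(t)\|_{H^{1/2}}$ together with a weaker norm in $H^r$, $r<3/2$. For the negative part, $\PM(|v|^2v)$ is nonzero but involves only frequencies of $v$. Because $\PM$ sees oscillating phases $e^{-i|n|t}$ against $e^{int}$, one normal-form integration by parts in time gains a factor decaying like $(1-|p|)^{\text{power}}$ relative to its size. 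The \textbf{main obstacle} is exactly this point: the profile grows exponentially in $H^s$, so the error terms must be shown to decay faster than the growth they multiply. I would use the $L^\infty$ bound $\|v\|_{L^\infty}\lesssim (1-|p|)^{-1}$ only in combinations where the $H^{1/2}$-level conserved quantities absorb it. If direct estimates fail, I would work in a gauge and modulation frame, adjusting $(b,c,p)$ by small corrections that absorb the nondecaying resonant error.

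\emph{Step 3 (conclusion).} Since $w(t)\to0$ in $H^{1/2}$ as $t\to\infty$, conservation gives $M(u)=M(v)=M_0$, $P(u)=2$ and $E(u)=(1+M_0/2)^2$. Global existence for all $t\in\mathbb R$ follows from the $H^{1/2}$ global theory for the defocusing equation. Persistence of $H^r$ regularity for $r<3/2$ comes from the known growth bounds, which are finite on compact time intervals. Finally, $\|w\|_{H^s}=o(\|v\|_{H^s})$ for $s<3/2$ (by interpolation between the decaying $H^{1/2}$ bound and a controlled $H^r$ bound with $r$ near $3/2$) gives the asymptotic $\|u(t)\|_{H^s}\sim c_{M_0,s}e^{\sqrt{2M_0}(2s-1)t}$.
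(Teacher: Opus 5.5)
There is a genuine gap already in Step~1: your effective model cannot produce a growing profile. On nonnegative frequencies $|D|=D$, and $e^{-itD}$ is a translation that commutes with $\Pi_+$ and with $|v|^2v$. So $i\partial_tv=Dv+\Pi_+(|v|^2v)$ is just the unperturbed cubic Szeg\H{o} equation in a moving frame. Its rational trajectories are quasiperiodic and bounded in every $H^s$ (G\'erard--Grellier). For $b+ce^{ix}/(1-pe^{ix})$ with $P=|c|^2(1-|p|^2)^{-2}=2$ one has $\|v\|_{\dot H^1}^2=2(1+|p|^2)/(1-|p|^2)$, so $|p(t)|$ stays away from $1$.

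The invariants you prescribe also rule out any concentrating positive-frequency profile. If $P=2$ and $|p|\to1$, then $\|v-b\|_2\to0$ while $\|v-b\|_\infty\lesssim1$. Hence $\|v\|_4^4\to M_0^2$ and $E=P/2+\frac14\|v\|_4^4\to1+M_0^2/4\ne(1+M_0/2)^2$. The missing $M_0$ is kinetic energy of mass sitting at a \emph{negative} frequency.

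That is the missing idea: the constant mode of the rational profile must ride on a carrier $e^{-ix}$. Write $u=e^{-ix-i(M_0+1)t}v(t,x-t)$. On co-moving modes $n\ge0$ the symbol becomes $|n-1|=(n-1)+2\cdot\mathbf 1_{n=0}$. The kink at $n=0$ produces the linear term $2(v\mid1)$, giving Xu's perturbed Szeg\H{o} equation $iw_t=\Pi_+(|w|^2w)+2(w\mid1)$. Up to a phase, its one-pole trajectories are $\sqrt Q\,(C-\bar p+dz/(1-pz))$ with $d\sim4b^2e^{-\Lambda t}$. With $Q=M_0+2$ this gives $\Lambda=\sqrt{8M_0}$, $P=Q-M_0=2$ and $E=(Q+M_0)/2+M_0^2/4$. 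In the paper, Theorem~\ref{intro:allmass} is exactly the case $K=N=J=1$, $\Psi(z)=z$, $M=M_0$, $Q=M_0+2$ of Theorem~\ref{core:main}. Note that this profile is uniformly bounded in $L^\infty$ and in $\W$; your bound $\|v\|_\infty\lesssim(1-|p|)^{-1}$ is off.

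Step~2 is also not a proof as it stands. The linearization $2|V|^2e+V^2\bar e$ has non-decaying $O(1)$ coefficients. It also couples the concentrating positive modes to the negative modes at order one. These terms are not integrable in time, so they cannot be treated as perturbations of $e^{-it|D|}$ in a weighted $H^{1/2}$ norm, which is not an algebra anyway. You have to understand the linearized flow around the profile itself.

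The paper does this in the norm $\|h\|_{\W}+d^{s-1/2}\|h\|_{H^s}$. It decomposes $e=\tau+\phi f+J_0k$, with a finite block (complex dimension $3$ here) whose limiting spectrum is $\{0,\Lambda\}$, so backward propagation grows only polynomially. It removes the finite source by a shear and the concentrated Hardy terms by an exact normal form solving $[H_0,\mathsf K]=-\mathsf E$. What remains is a constant-coefficient Hardy system with a polynomially bounded group. The only exponential backward loss is $e^{\Lambda(s-1/2)(\tau-t)}$, coming from the weight. The residual decay $\eta=d\ell$ beats it exactly when $s<3/2$. This, not an interpolation step, is the source of the threshold $3/2$ and of the bound $\|u-U^{(0)}\|_{H^s}\lesssim d^{3/2-s}\ell$ that yields the asymptotic.
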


Theorem~\ref{core:main} gives the full construction. Besides the mass,
one may choose two integer frequency parameters and a finite Blaschke
product. These choices determine the growth rate and the geometry of
concentration. In coordinates moving with speed one, the solution
approaches a plane wave below $H^{1/2}$, while a nonzero amount of
kinetic energy concentrates at finitely many points. The points and
their relative widths can be prescribed, subject to one normalization.
Theorem~\ref{intro:allmass} is a single-pole consequence of this more
general result.

The geometry is not tied to changing the conserved quantities. On the
fixed surface
\[
 (M,P,E)=(2,0,3),
\]
we construct solutions with any finite number $J$ of prescribed distinct
concentration points. Each point carries critical kinetic defect $2/J$.
Consequently, the defect measures realized by this family are weak-$*$
dense among positive measures of total mass $2$; see
Corollary~\ref{cor:defectdensity}. This is a statement about different
solutions in the family, not a construction with infinitely many poles.

The regularity threshold $3/2$ comes from the correction that turns the
explicit profile into an exact half-wave solution. At each finite time
the selected data satisfy
\[
 \sup_{n\in\mathbb Z}\langle n\rangle^2|\widehat u(t,n)|<\infty.
\]
They therefore lie in $B^{3/2}_{2,\infty}$. For $\sigma\in\mathbb R$,
define the logarithmic Sobolev space
\begin{equation}\label{intro:logspace}
 \mathcal H_\sigma
 =\left\{f\in L^2(\T):
   \sum_{n\in\mathbb Z}
   \frac{\langle n\rangle^3}{\log^{2\sigma}(e+|n|)}
   |\widehat f(n)|^2<\infty\right\}.
\end{equation}
The constructed data belong to $\mathcal H_\sigma$ for every
$\sigma>1/2$.
Theorem~\ref{tailgen:theorem} proves that this logarithmic threshold is
sharp on the slice $M=2K$, where $-K$ is the carrier frequency in
Theorem~\ref{core:main}, and on an open dense, full-measure set of
admissible scalar parameters. Theorem~\ref{intro:allmass} is a
finite-regularity result; smoothness is not asserted at the exceptional
parameters where the leading endpoint coefficient vanishes.

\subsection{Relation to earlier work}
Growth of high Sobolev norms records a transfer to shorter spatial
scales that need not be visible in the conserved mass and energy.
The question has a long history in Hamiltonian PDE, including
Bourgain's work on long-time Sobolev growth \cite{BourgainGrowth}.
For the defocusing cubic nonlinear Schr\"odinger equation (NLS) on
$\T^2$, Colliander, Keel, Staffilani, Takaoka, and Tao proved that,
for every $s>1$, an arbitrarily small initial $H^s$ norm can become
arbitrarily large at a suitable finite time \cite{CKSTT}.
Guardia and Kaloshin addressed the time needed for a prescribed
amplification: they obtained an upper bound polynomial in the
amplification factor \cite{GuardiaKaloshin}. This bound does not also
prescribe an independently small initial high Sobolev norm.
More recently, Maspero and Murgante constructed small-to-large growth
at sufficiently high regularity for a fractional quasilinear NLS on
$\T$, with sublinear dispersion and a cubic transport term
\cite{MasperoMurgante}. These are finite-time growth results: the
initial datum may depend on the amount of growth one asks to observe.

An unbounded orbit for one fixed datum is a different matter.
Hani proved the existence of such orbits in $H^s(\T^2)$, $s>1$, for
a family of Hamiltonian cubic equations in which interactions beyond
a fixed resonance threshold are removed \cite{Hani}. This family
includes the resonant cubic NLS, but not the full polynomial cubic
equation. On $\mathbb R\times\T^d$, where $\T^d$ is the $d$-fold
product of the circle, Hani, Pausader, Tzvetkov, and Visciglia developed
modified scattering and modified wave operators for small, sufficiently
regular and localized solutions of the defocusing cubic NLS,
$1\le d\le4$ \cite{HPTV}. For $2\le d\le4$, they used the resulting
correspondence with the resonant dynamics to obtain solutions whose
high Sobolev norms are unbounded along a sequence of times. The extra
dispersive direction is an essential feature of that setting.

For the half-wave equation, the most closely related effective model
is the cubic Szeg\H{o} equation
\begin{equation}\label{intro:szego}
 i\partial_tv=\Pi_+(|v|^2v),\qquad v=\Pi_+v,
\end{equation}
$\Pi_+$ keeps only the nonnegative Fourier modes. G\'erard and Grellier
discovered its integrable structure \cite{GG10}. Their explicit formula
shows, in particular, that rational Hardy data on the circle generate
quasiperiodic trajectories bounded in every $H^s$, $s>1/2$
\cite{GG15}. The infinite-dimensional dynamics behave differently:
they later found a dense $G_\delta$ set of smooth Hardy data whose
Sobolev norms grow faster than any fixed power of time along suitable
sequences, for every $s>1/2$ \cite{GG17}. Here
$C^\infty_+(\T)=\Pi_+C^\infty(\T)$ carries its usual smooth topology,
and a $G_\delta$ set is a countable intersection of open sets.
These same trajectories return to their initial data along another
sequence of times. Thus even in this integrable model, unbounded
growth need not mean growth at every sufficiently large time.

The real-line Szeg\H{o} equation has a different rational dynamics.
Pocovnicu obtained an explicit formula and soliton resolution for
rational data satisfying suitable spectral nondegeneracy conditions.
She also exhibited exceptional rational trajectories with
$\|v(t)\|_{H^s(\mathbb R)}\gtrsim_s |t|^{2s-1}$ for $s>1/2$ and
large $|t|$ \cite{PocovnicuExplicit}.
G\'erard and Pushnitski subsequently proved that a dense $G_\delta$
subset of the Hardy--Sobolev space $H_+^1(\mathbb R)$ generates
unbounded $H^1$ orbits \cite{GerardPushnitskiInverse}.
Here $H_+^1(\mathbb R)$ consists of $H^1(\mathbb R)$ functions whose
Fourier transform vanishes at negative frequencies. This genericity
statement concerns that space and the projected real-line flow; it
does not give genericity for the full periodic half-wave equation.

Szeg\H{o} dynamics also govern growth for the anisotropic equation
\begin{equation}\label{intro:waveguide}
 (i\partial_t+\partial_x^2-|D_y|)U=|U|^2U
 \quad\text{on }\mathbb R_x\times\mathcal Y_y,
 \qquad \mathcal Y\in\{\T,\mathbb R\}.
\end{equation}
The multiplier $|D_y|$ has symbol equal to the absolute value of the
Fourier frequency in $y$. We use the mixed norm
\[
 \|U\|_{L_x^2H_y^s}^2
   =\int_{\mathbb R}\|U(x,\cdot)\|_{H^s(\mathcal Y)}^2\,dx.
\]
On $\mathbb R\times\T$, Xu constructed global solutions with
arbitrarily small initial data in a weighted high-regularity space and
unbounded $L_x^2H_y^s$ limit superior for every $s>1/2$
\cite{XuWaveguide}. The construction uses data satisfying
$U_0(x,y+\pi)=-U_0(x,y)$, where $U_0=U(0,\cdot,\cdot)$, and
modified scattering to positive- and
negative-frequency Szeg\H{o} flows. On $\mathbb R^2$, the author constructed
global solutions, again with arbitrarily small data in a weighted
high-regularity space, for which $\|U(t)\|_{L_x^2H_y^1}$ is comparable
to $1+\log t$ for all sufficiently large times \cite{ChenPlane}.
Both results use dispersion in $x$ to transfer resonant dynamics to
the full anisotropic equation.

For the 1D cubic half-wave equation, G\'erard and
Grellier justified the transport--Szeg\H{o} approximation for
nonnegative-frequency initial data of $H^s$ size $\varepsilon$,
$s>1$, over times of order $\varepsilon^{-2}\log(1/\varepsilon)$
\cite{GG12}. It yields arbitrarily large relative amplification of
higher Sobolev norms as the initial data vary. Pocovnicu obtained a
first-order approximation on the real line and a second-order
approximation on the circle, the latter containing a quintic effective
correction \cite{Poc13}. These approximation results hold on long but
finite intervals; they do not by themselves produce one half-wave
trajectory with an unbounded Sobolev norm. On the real line,
G\'erard, Lenzmann, Pocovnicu, and Rapha\"el
\cite{GLPR18} constructed a two-soliton solution of the focusing
half-wave equation with a transient turbulent regime followed by
saturation. This provides a full-equation growth mechanism, but its
sign, domain, and long-time behavior differ from those considered here.  More recently, the author proved infinite cascades for the cubic half-wave equation on the line \cite{Che26}. Finite-time blow-up for
the focusing cubic half-wave equation on the line. has also been studied in
\cite{KLR,CaoSuZhang,KimKwonParkBW,KimKwonParkLogLog,ParkNegativeEnergy}.

Short-time instability at low regularity is another distinct question.
For the cubic half-wave equation on the line, Choffrut and Pocovnicu
proved failure of uniform continuity on bounded subsets of $H^s$ for
$0<s<1/2$, as well as norm inflation for $s<0$ \cite{CP18}.
These conclusions do not describe the long-time behavior above the
energy regularity. In the periodic defocusing problem, Thirouin proved
upper bounds $\|u(t)\|_{H^k}\le C\exp(B|t|^2)$ for smooth data and
integer $k\ge2$, with positive constants $B,C$ depending on the datum
and the order \cite{Thirouin17}. Those bounds apply to a smoother
class than the solutions constructed here.

Our profiles instead come from the linearly perturbed Szeg\H{o} equation
\begin{equation}\label{intro:alphaszego}
 i\partial_tv=\Pi_+(|v|^2v)+\alpha(v\mid1),\qquad \alpha>0,
\end{equation}
where $(v\mid1)$ is the constant Fourier coefficient, regarded as a
constant function. Unlike the unperturbed periodic equation, this
model has single-pole rational trajectories with exponential Sobolev
growth \cite{Xu14}. Composition with finite Blaschke products produces
corresponding multipole trajectories \cite[Section~6]{Xu15}.
We take these explicit dynamics as our starting point. The new step
is to realize their concentration for the full half-wave equation on
an infinite time interval, while retaining the negative frequencies
created by the nonlinearity. The resulting growth holds along one
exact solution at every sufficiently large time, although the
correction limits its Sobolev regularity to the range stated in
Theorem~\ref{intro:allmass}.

\subsection{Outline of the argument}
The profile solves the projected equation exactly, but its cubic
nonlinearity has a negative-frequency component. We first cancel this
defect to leading order by applying an inverse first-order multiplier.
This gains one derivative and makes the residual small. The remaining
problem cannot be treated by discarding negative modes: their linear
coupling to the concentrating positive modes must also be retained.

We separate the linearized equation into a finite-dimensional block
associated with the poles and an infinite-dimensional remainder. An
explicit change of variables reduces the latter to a constant
Hamiltonian system plus integrable terms. The resulting backward
propagation estimate is compatible with the decaying residual exactly
in the range $s<3/2$. A final-value contraction then constructs the
half-wave solution on a late time interval, and global well-posedness
extends it to all real times. The asymptotic profile gives the growth
law and conserved quantities.

Section~\ref{core:construction} carries out this construction.
Section~\ref{sec:endpoint} examines the Fourier tail at the limiting
regularity. Section~\ref{sec:concentration-instability} identifies the
concentration profiles and defect measures. Appendix~\ref{sec:invariants}
records the sharp restrictions and optimal rates within the explicit
family. The paper makes no claim of generic unbounded growth in the
full Sobolev space.

\subsection{Notation and the global flow}
All function spaces are complex-valued and, unless another domain is
specified, are defined on $\T$. All $L^p$ norms and inner products on
$\T$ use $dx/(2\pi)$; we abbreviate the $L^p$ norm by $\|\cdot\|_p$.
Our Fourier and Sobolev conventions are
\[
 \widehat f(n)=\int_\T f(x)e^{-inx}\,\frac{dx}{2\pi},\qquad
 \langle n\rangle=(1+n^2)^{1/2},\qquad
 \|f\|_{H^s}^2=\sum_{n\in\mathbb Z}\langle n\rangle^{2s}|\widehat f(n)|^2.
\]
$H^s(\T)$ consists of the periodic distributions for which this norm
is finite. We write $f_n=\widehat f(n)$ when no ambiguity is possible,
and $\widehat u(t,n)=\widehat{u(t,\cdot)}(n)$ for time-dependent
functions. The homogeneous seminorm is
$\|f\|_{\dot H^s}^2=\sum_{n\in\mathbb Z\setminus\{0\}}|n|^{2s}|f_n|^2$;
it ignores the constant mode.
We identify a Hardy function with its analytic extension
$f(z)=\sum_{n\ge0}f_nz^n$ to $\mathbb D=\{z\in\mathbb C:|z|<1\}$.
For analytic functions, $[z^n]F$ denotes the coefficient of $z^n$.
Primes on time-dependent scalar or operator coefficients denote time
derivatives. For an analytic function of $z$, a prime instead denotes
differentiation with respect to $z$.
The Euler gamma function is
$\Gamma(a)=\int_0^\infty t^{a-1}e^{-t}\,dt$ for $\operatorname{Re}a>0$,
where $t^{a-1}=e^{(a-1)\log t}$ for $t>0$.

The projections $\Pi_+$ and $\Pi_-$ retain frequencies $n\ge0$ and
$n<0$, respectively; $\Pi_0f=f_0$ is the constant mode.
We set $H_+^s=\Pi_+H^s$ and
\[
 \W=\left\{f:\sum_{n\in\mathbb Z}|f_n|<\infty\right\},
 \qquad \|f\|_\W=\sum_n|f_n|.
\]
On the Hardy space, $T_af=\Pi_+(af)$ is the Toeplitz operator with
symbol $a$, while $M_af=af$ denotes multiplication by an analytic
symbol. An inner function is a bounded analytic function on the unit
disk with boundary modulus one almost everywhere. Bars denote complex
conjugation of boundary values. Operators involving conjugation are
understood as real-linear operators. We use $I$ for the identity and
$[A,B]=AB-BA$ for the commutator. The zero-mean multiplier $D^{-1}$ has
symbol $1/n$ for $n\ne0$ and symbol zero at $n=0$.
For operators acting on Hardy functions, a space in an operator-norm
subscript means its nonnegative-frequency subspace, with the inherited
norm. Matrix norms are induced by the Euclidean norm. The determinant
and spectrum of a real-linear map on $\mathbb C^m$ refer to its
representation on $\mathbb R^{2m}$.

Choose a fixed smooth inhomogeneous dyadic partition of the Fourier
lattice, and let $P_j$, $j\ge0$, be the corresponding multipliers, with
$P_0$ supported near zero and $P_j$ supported where $|n|\asymp2^j$
for $j\ge1$. We use the norm
\[
 \|f\|_{B^{3/2}_{2,\infty}}
   =\sup_{j\ge0}2^{3j/2}\|P_jf\|_2.
\]
For any normed space $Y$, its product $Y^2$ carries the sum norm. The notation
$\mathbf1_A$ denotes the indicator of $A$, and
$\N_{>0}=\{1,2,\ldots\}$. We write $a\lesssim b$ if $a\le Cb$
with a constant independent of time; dependence on fixed parameters
or Sobolev indices is allowed unless stated otherwise. We write
$a\gtrsim b$ when $b\lesssim a$. The notation
$a\asymp b$ means both inequalities, and $a\sim b$ means $a/b\to1$
in the indicated limit. Unqualified asymptotic statements about the
concentrating profiles refer to $t\to+\infty$.
The notation $a=O(b)$ means $|a|\lesssim b$, and $a=o(b)$ means
$a/b\to0$ in the stated limit, with norms in place of absolute values
for normed-space quantities. Subscripts on $O$ or $\lesssim$ record
additional allowed dependence. The letter $C$ denotes a generic
positive constant in estimates; in the explicit profile identities
it denotes the fixed parameter $C=\sqrt{\alpha/Q}$ introduced in
\eqref{core:parameters}.

We write $\Phi_t f$ for the half-wave solution at time $t$ with datum
$f$ at time zero. Global well-posedness in
$H^{1/2}$ and persistence of higher Sobolev regularity follow from
\cite[Proposition~1]{GG12}. In particular, a solution constructed on
a late time interval in any $H^s$, $s>1/2$, extends uniquely to all
real times. On each such space the flow is continuous on bounded time
intervals. It preserves phase rotations, spatial translations,
reflection, and each Fourier sector $-K+N\mathbb Z$, because that
sector is closed under $n_1-n_2+n_3$. The integer covering identity is
\begin{equation}\label{intro:cover}
 \Phi_t\bigl(\sqrt N f(N\cdot)\bigr)
       =\sqrt N\,(\Phi_{Nt}f)(N\cdot),\qquad N\in\N_{>0}.
\end{equation}
This is a discrete symmetry; there is no continuous spatial dilation
symmetry on the fixed torus.

\section{Exponentially concentrating solutions}\label{core:construction}

We begin with the infinite-time construction. In addition to the mass,
its parameters specify a negative carrier frequency, an integer covering
of the circle, and the geometry of the concentrating component. After
stating the result, we develop the coordinates and the final-value
argument needed for its proof.

Fix integers $1\leq K\leq N$, set $\alpha=2K$, and choose $M,Q>0$ such that
\begin{equation}\label{core:admissible}
 \Lambda^2=4\alpha Q-(\alpha+Q-M)^2>0.
\end{equation}
We take $\Lambda$ to be the positive square root in
\eqref{core:admissible} and use the notation
\begin{equation}\label{core:parameters}
 \begin{gathered}
 g=\sqrt{\alpha Q},\qquad C=\sqrt{\alpha/Q},\qquad
 x_*=\frac{\alpha+Q-M}{2g},\qquad b=\sqrt{1-x_*^2},\\
 p(t)=x_*-ib\tanh(\Lambda t/2),\qquad
 d(t)=b^2\sech^2(\Lambda t/2),\qquad
 r(t)=C-\overline{p(t)},\\
 p_\infty=x_*-ib,\qquad \ell(t)=1+|\log d(t)|.
 \end{gathered}
\end{equation}
In particular, $d=1-|p|^2$, $\Lambda=2gb$, and
$d(t)\sim4b^2e^{-\Lambda t}$ as $t\to+\infty$.

Let $\Psi$ be a finite Blaschke product of degree $J\geq1$; that is,
\[
 \Psi(z)=e^{i\vartheta}\prod_{m=1}^J
             \frac{z-a_m}{1-\bar a_mz},\qquad
 \vartheta\in\mathbb R,\quad a_m\in\mathbb D,
\]
where zeros are counted with multiplicity. We impose the normalization
$\Psi(0)=0$. The $J$ solutions of
$\Psi(\zeta_j)=\overline{p_\infty}$ on the unit circle are distinct. Put
$v_j=|\Psi'(\zeta_j)|$; each $v_j$ is positive.

\begin{theorem}[Concentrating trajectories]\label{core:main}
For every choice of the preceding parameters there is a global solution
$u\in\bigcap_{s<3/2}C(\mathbb R;H^s(\T))$ of the defocusing cubic
half-wave equation, with Fourier support in $-K+N\mathbb Z$, such that
\begin{equation}\label{core:profile}
 U^{(0)}(t,x)=\sqrt Q\,e^{-iKx-i(M+K)t}
 \left(r(t)+\frac{d(t)\Psi(e^{iN(x-t)})}{1-p(t)\Psi(e^{iN(x-t)})}\right)
\end{equation}
approximates $u$ for all sufficiently large positive times:
\begin{equation}\label{core:error}
 \|u(t)-U^{(0)}(t)\|_{\W}\lesssim d(t)\ell(t),\qquad
 \|u(t)-U^{(0)}(t)\|_{H^s}
       \lesssim_s d(t)^{3/2-s}\ell(t),\quad \tfrac12<s<\tfrac32.
\end{equation}
For every $1/2<s<3/2$, its norm has the asymptotic behavior
\begin{equation}\label{core:growth}
 \|u(t)\|_{H^s}\sim
 N^s\left(Q\Gamma(2s+1)\sum_{j=1}^Jv_j^{2s-1}\right)^{1/2}
 (4b^2)^{1/2-s}e^{\Lambda(s-1/2)t}
 \qquad(t\to+\infty).
\end{equation}
The conserved quantities of this solution are
\begin{equation}\label{core:invariants}
 M(u)=M,\qquad P(u)=NJQ-KM,\qquad
 E(u)=\frac{NJQ+KM}{2}+\frac{M^2}{4}.
\end{equation}
The implicit constants are allowed to depend on the fixed parameters and
on $\Psi$. In particular, no uniformity is claimed when concentration
points coalesce.
\end{theorem}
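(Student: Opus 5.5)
The plan is to reduce to the perturbed Szeg\H{o} profile, build a corrected approximate solution, and close a final-value contraction at $t=+\infty$, using the outline in the introduction.

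\emph{Step 1 (profile).} Factor out the carrier: write $u=e^{-iKx-i(M+K)t}w(t,x-t)$. Since $\alpha=2K$, on the sector $-K+N\mathbb Z$ the half-wave symbol $|n|$ differs from the transport symbol $n$ only on negative frequencies $n<0$, where $|n|-n=-2n$. The equation for $w$ then becomes transport plus $|w|^2w$, with the linear term $\alpha$ acting on the Fourier mode $0$ of $w$ (frequency $-K$ of $u$) and a first-order multiplier $2|D|$-type term on $\PM w$. First I check that $v(t,z)=\sqrt Q\bigl(r+d\Psi(z^N)/(1-p\Psi(z^N))\bigr)$ solves the $\alpha$-perturbed Szeg\H{o} equation \eqref{intro:alphaszego}, up to the phase $e^{-iMt}$, via the ODEs for $p,d$. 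For $J=1$ this is the single-pole computation of \cite{Xu14}. The general case follows by composing with $\Psi$, which is inner with $\Psi(0)=0$, so $T_{\Psi}$ commutes with the relevant Toeplitz structure \cite[Section~6]{Xu15}; the covering $z^N$ then follows from \eqref{intro:cover}. Then $U^{(0)}$ solves the half-wave equation up to the residual $e\cdot\PM(|v|^2v)$.

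\emph{Step 2 (correction).} Add $U^{(1)}=-(2|D|)^{-1}\PM(|v|^2v)$, shifted appropriately. This cancels the negative-frequency defect to leading order, and the new residual $R$ involves only $\partial_tU^{(1)}$ and the cubic terms in $U^{(1)}$. Using explicit Fourier coefficients of $v$, which are geometric in $p$ with width $d$, one gets $\|\PM(|v|^2v)\|$ of size $d^{1/2}$-type in $L^2$ with decay $\langle n\rangle^{-1}$. The resulting bounds are $\|U^{(1)}\|_{\W}\lesssim d\ell$, $\|U^{(1)}\|_{H^s}\lesssim d^{3/2-s}\ell$, and $\|R\|_{H^s}\lesssim d^{3/2-s}\ell$, with an extra factor $d^{\epsilon}$ gained in a weaker norm. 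The logarithm $\ell$ comes from summing $1/n$ up to $n\sim d^{-1}$.

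\emph{Step 3 (linearized flow; main obstacle).} Write $u=U^{(0)}+U^{(1)}+\varepsilon$. The linearization at $v$, with negative modes coupled, must be propagated backward from $+\infty$ with at most polynomial-in-$\ell$ loss in $H^s$, $s<3/2$. I would split the space into the finite-dimensional tangent block spanned by $\partial_p,\partial_d$, phase and Blaschke directions, which carries the exponentially unstable and stable modes, and its complement. On the complement I would conjugate by the explicit map $f\mapsto f\circ(\text{Möbius in }p)\cdot$weight, which turns the Toeplitz and Hankel operators of $v$ into a constant-coefficient Hamiltonian system plus integrable-in-time terms. An energy estimate then gives $\|S(t,\tau)\|_{H^s\to H^s}\lesssim (d(\tau)/d(t))^{s-1/2}$-compatible growth. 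The finite block is handled by choosing final conditions killing the unstable directions, in the usual Lyapunov--Perron way. I expect this weighted backward estimate to be the hardest step: it is the point where $s<3/2$ is sharp, since the integral $\int_t^\infty d(\tau)^{3/2-s}(d(t)/d(\tau))^{\cdots}\,d\tau$ must converge.

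\emph{Step 4 (closing).} Run a contraction for $\varepsilon$ in the norm $\sup_{t\ge T}\bigl(d^{-1}\ell^{-1}\|\varepsilon\|_{\W}+d^{s-3/2}\ell^{-1}\|\varepsilon\|_{H^s}\bigr)$ via Duhamel from $+\infty$. The cubic terms close through the Wiener algebra $\W$ and the smallness $d(T)$. Extend the solution globally by \cite[Proposition~1]{GG12}; the Fourier support is preserved because the sector is invariant. This gives \eqref{core:error}.

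\emph{Step 5 (consequences).} The growth \eqref{core:growth} follows from \eqref{core:error}, since the error is $o(d^{1/2-s})$, together with an explicit computation. Near each $\zeta_j$ one has $1-p\Psi(z^N)\approx N v_j\cdot(\text{local})+d/2$, so the Fourier coefficients of $U^{(0)}$ behave like $\sqrt Q\,d\,(p\,\cdot)^n$-sums. The quantity $\sum n^{2s}|\widehat v(n)|^2\sim Q\Gamma(2s+1)N^{2s}\sum_jv_j^{2s-1}d^{1-2s}$ arises as the Laplace-type sum $\sum n^{2s}e^{-nd/v_j}d^2/v_j^2$, and substituting $d\sim4b^2e^{-\Lambda t}$ gives the stated rate. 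The invariants \eqref{core:invariants} are obtained by evaluating at $t\to\infty$, where the error vanishes in $H^{1/2}$ and $L^4$ is controlled by $\W$. Mass, momentum and energy are computed from $v$ using the Szeg\H{o} conservation laws and $\deg(\Psi\circ z^N)=NJ$.
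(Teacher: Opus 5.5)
Your overall architecture matches the paper's: the co-moving perturbed Szeg\H{o} profile, a first correction by inverting the negative-frequency dispersion, a finite block split off, and a final-value contraction in exactly the norm $\eta^{-1}(\|\cdot\|_{\W}+d^{s-1/2}\|\cdot\|_{H^s})$, followed by asymptotics and invariants obtained by passing to the limit. The gap is Step~3: you rightly call it the crux, but you assert it rather than prove it, and the mechanism you propose does not work. Composition with a M\"obius map in $p$ does not commute with $D$. It turns the constant dispersion $2[N(D+1)+K]$ on the negative modes into a variable-coefficient operator with coefficients of size $d^{-1}$ at the peak. Its $H^s$ norms also blow up as $|p|\to1$, and by Beurling--Helson it is not even bounded on $\W$. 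The paper instead \emph{multiplies} by the inner function $\phi=z\beta^2$, writing $e=\tau+\phi f+J_0k$ with $\tau$ in the model space $K_\phi=\operatorname{span}\{1,G_j,G_j^2\}$. This makes the positive transverse part decouple exactly from the finite block, because $B^2\overline{\phi f}$ is purely negative-frequency and $i\phi_t$ cancels $2g(F+\bar F)\phi f$.

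More importantly, even in these exact coordinates the remainder is \emph{not} integrable in time. The concentrated couplings $QT_{\bar V_1}$, $QM_{V_1}$, $2gT_{W_1}$ in \eqref{core:hardy-blocks}, and the finite-to-negative source $\mathcal S_ta$, have operator norms of order one uniformly in $t$ (already $\|F\|_{\W}=1+|p|\to2$), so Gronwall is unavailable. An energy estimate does not rescue you either: the quadratic form preserved by the Hardy-pair part of \eqref{core:full-system} is $\|f\|_2^2-\|k\|_2^2$, which is indefinite. Even for the constant group $e^{-itH_\infty}$, boundedness comes only from the spectral gap $2[N(n+1)+K]$ between the two components.

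The missing idea is to use that gap quantitatively. The shear $k=\kappa+\tfrac12\mathcal Q\mathcal S_ta$ in \eqref{core:shear} removes the finite source. The normal form $Y=(I+\mathsf K)w$, with $\mathsf K$ from \eqref{core:normal-form-map} solving $[H_0,\mathsf K]=-\mathsf E$ exactly, removes the concentrated couplings. Both rest on Lemma~\ref{core:smoothing}: $\mathcal QM_a$, $T_{\bar a}\mathcal Q$ and $T_{D^{-1}(a+\bar a)}$ are $O(\eta)$ on $\mathcal B_t^s$, i.e.\ one derivative of gain beats the $d^{-1}$ concentration scale up to a logarithm. Only after this is the system triangular with an $O(\eta)$, hence integrable, remainder as in \eqref{core:triangular}, so the finite block can be treated on its own.

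For that block you still need its limiting spectrum, $\det(XI-\mathsf M_\infty)=X^{2J+2}(X-\Lambda)^{2J}$. Contrary to your description, there are no forward-stable modes, so no Lyapunov--Perron selection is needed and the backward propagator grows only polynomially. Combined with \eqref{core:hardy-group} (real spectrum, with a Jordan block at $n=0$ when $N=K$), this gives \eqref{core:backward}. Multiplying by $\eta(\tau)/\eta(t)$ then yields the kernel $e^{-\Lambda(3/2-s)(\tau-t)}$, which is precisely where $s<3/2$ enters.

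A minor point: the covering identity \eqref{intro:cover} cannot produce the sector $-K+N\mathbb Z$ when $K<N$, since it sends carrier $-K'$ to $-NK'$. You do not need it, because $L=\alpha\Pi_0$ on nonnegative co-moving modes whenever $N\ge K$, so the profile can be checked directly.
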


We also write $U_\Psi^{(0)}=U^{(0)}$ when the dependence on the geometry
is relevant. For fixed $K,N,J$, all estimates below are uniform on compact
admissible scalar-parameter sets and compact families of separated poles
with widths bounded away from zero.

The main analytical difficulty is to pass from the explicit projected
trajectory to a solution of the full equation on an infinite time
interval. Since the cubic nonlinearity couples positive and negative
frequencies, the negative modes must be retained. We incorporate their
full linear coupling and solve a final-value problem for the correction
in a norm adapted to the shrinking concentration scale; in this norm,
the remaining terms are integrable backward in time.

A simple single-point example is obtained with $K=N=J=1$,
$M=Q=2$, and $\Psi(z)=z$. In that case
\[
 p(t)=\frac{1-i\sqrt3\tanh(\sqrt3t)}{2},\qquad
 d(t)=\frac{3}{4}\sech^2(\sqrt3t),\qquad r(t)=p(t),
\]
and the theorem gives a solution on $(M,P,E)=(2,0,3)$ satisfying
$\|u(t)\|_{H^1}\sim(2/\sqrt3)e^{\sqrt3t}$.

\subsection{The projected profile and its first correction}
\label{core:profile-section}

Write
\begin{equation}\label{core:transform}
 u(t,x)=e^{-iKx-i(M+K)t}v(t,y),\qquad
 y=N(x-t),\qquad z=e^{iy}.
\end{equation}
The equation for $v$ is exactly
\begin{equation}\label{core:transformed}
 iv_t=(L-M-\alpha)v+|v|^2v,\qquad L=2(K-ND)_+.
\end{equation}
Here and throughout the co-moving calculation, $D=-i\partial_y$, and
$(K-ND)_+$ is the Fourier multiplier with symbol
$\max\{K-Nn,0\}$. Thus the assumption $N\ge K$ implies that
$L=\alpha\Pi_0$ on nonnegative Fourier modes.
Define
\begin{equation}\label{core:Fbeta}
 F=\frac{d\Psi(z)}{1-p\Psi(z)},\qquad
 \beta=\frac{\Psi(z)-\bar p}{1-p\Psi(z)},\qquad
 B=C+\beta=r+F,\qquad A=\sqrt Q\,B,\qquad \phi=z\beta^2.
\end{equation}
Here $\beta$ and $\phi$ are inner functions. Direct differentiation gives
\begin{equation}\label{core:identities}
 \begin{gathered}
 p'=r'=-igd,\qquad \mu=-d'/d=\Lambda\tanh(\Lambda t/2),\\
 F\bar F=d+pF+\bar p\bar F,\qquad
 iF_t=-i\mu F+gF^2,\qquad
 i\phi_t=2g(F+\bar F)\phi,\\
 |A|^2=M+g(F+\bar F).
 \end{gathered}
\end{equation}
For the last identity, use $Q|r|^2+Qd=M$ and $r+\bar p=C$.
Since $\Psi(0)=0$, composition with $\Psi$ preserves both analytic
functions and strictly negative Fourier series. In particular,
\begin{equation}\label{core:defect}
 \PM(|A|^2A)=\alpha\sqrt Q\,\bar F,\qquad
 iA_t=(L-M-\alpha)A+|A|^2A-\alpha\sqrt Q\,\bar F.
\end{equation}
These formulas verify the projected trajectory without using a
long-time approximation theorem.
Equivalently, $e^{-i(M+\alpha)t}A(t)$ solves the perturbed Szeg\H{o}
equation $iw_t=\PP(|w|^2w)+\alpha\Pi_0w$. The single-pole concentrating
trajectories belong to the theory developed in \cite{Xu14}; composition
with a finite Blaschke product vanishing at zero is established in
\cite[Section~6]{Xu15}. Our task here is to lift these projected
trajectories to the full half-wave equation.

The profile has a bounded Wiener norm but a growing $H^s$ norm.
To keep both features visible, equip $\mathcal B_t^s=H^s(\T)$,
for $1/2<s<3/2$ and sufficiently large $t$, with the norm
\begin{equation}\label{core:norm}
 \|h\|_{\mathcal B_t^s}=\|h\|_{\W}
                   +d(t)^{s-1/2}\|h\|_{H^s},\qquad
 \eta(t)=d(t)\ell(t).
\end{equation}
The convolution inequality
$\langle n\rangle^s\lesssim_s\langle n-k\rangle^s+\langle k\rangle^s$
gives the tame product bound
\begin{equation}\label{core:tame}
 \|fh\|_{H^s}\lesssim_s
       \|f\|_{\W}\|h\|_{H^s}+\|h\|_{\W}\|f\|_{H^s}.
\end{equation}
Thus $\mathcal B_t^s$ is an algebra with a constant independent of $t$.
Both Hardy projections are uniformly bounded in this norm.

We will repeatedly use the following elementary estimate. It is stated
for the entire Hardy space, rather than for a Fourier truncation.

\begin{lemma}[Concentrated symbols]\label{core:smoothing}
Suppose an analytic symbol $a=\sum_{n\geq0}a_nz^n$ satisfies
\begin{equation}\label{core:envelope}
 |a_n(t)|+|\partial_ta_n(t)|
       \leq C d(t)(1+d(t)n)^{m_0} e^{-c d(t)n}
\end{equation}
for fixed $C,c>0$ and an integer $m_0\geq0$. Let
$\mathcal Q=[N(D+1)+K]^{-1}$ on the nonnegative Hardy space.
For each fixed $1/2<s<3/2$,
\begin{equation}\label{core:small-operators}
 \|\mathcal Q M_a\|_{\mathcal B_t^s\to\mathcal B_t^s}
 +\|T_{\bar a}\mathcal Q\|_{\mathcal B_t^s\to\mathcal B_t^s}
 \lesssim_s\eta(t).
\end{equation}
If $a_0=0$, the zero-mean primitive
$q=D^{-1}(a+\bar a)$ satisfies
\begin{equation}\label{core:primitive}
 \|T_q\|_{\mathcal B_t^s\to\mathcal B_t^s}\lesssim_s\eta(t).
\end{equation}
The same bounds hold for the time derivatives of these operators.
Furthermore, $\|a\|_{\mathcal B_t^s}\lesssim_s1$.
\end{lemma}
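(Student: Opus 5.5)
The plan is to reduce everything to elementary sums over the envelope
$w_n=d(1+dn)^{m_0}e^{-cdn}$ and to Schur/Young-type bounds on the
Fourier side. The two basic facts are
\[
 \sum_{n\ge0}w_n\lesssim 1
 \qquad\text{and}\qquad
 \Bigl(\sum_{n}\langle n\rangle^{2s}w_n^2\Bigr)^{1/2}\lesssim d^{1/2-s},
\]
both obtained by comparison with the integral $\int_0^\infty(1+x)^{m_0}e^{-cx}\,dx$ after substituting $x=dn$. Together they give $\|a\|_{\W}\lesssim1$ and $d^{s-1/2}\|a\|_{H^s}\lesssim1$, hence $\|a\|_{\mathcal B^s_t}\lesssim1$. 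Time derivatives obey the same envelope hypothesis, so every bound transfers to them verbatim. Throughout, $\mathcal Q$ has symbol $(N(n+1)+K)^{-1}\asymp\langle n\rangle^{-1}$ on $n\ge0$.

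\emph{The operator $\mathcal QM_a$.} For $h$ in the Hardy space we have
$(\mathcal Q(ah))_n=(N(n+1)+K)^{-1}\sum_{k\le n}a_{n-k}h_k$.

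For the Wiener part, sum over $n$ for fixed $k$ to get the column bound
$\sum_{n\ge k}\langle n\rangle^{-1}w_{n-k}\lesssim\sum_{j\ge0}\langle j\rangle^{-1}w_j$.
Splitting at $j\sim1/d$ gives $d\sum_{j\le1/d}j^{-1}\lesssim d\ell$, and the tail is $\lesssim d$. So $\|\mathcal QM_ah\|_{\W}\lesssim\eta\|h\|_{\W}$.

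For the weighted $H^s$ part I use $\langle n\rangle^{s-1}\lesssim\langle n-k\rangle^{s-1}+\langle k\rangle^{s-1}$. This is valid because $s-1\in(-1/2,1/2)$: the case $s\ge1$ is the usual subadditivity, and for $s<1$ it holds since $n\ge\max(k,n-k)$. This splits $\langle n\rangle^{s}(\mathcal Q(ah))_n$ into two pieces.
\begin{enumerate}
\item The piece with $\langle k\rangle^{s-1}|h_k|\le\langle k\rangle^{s}|h_k|$, convolved against $w$. By Young's inequality it is at most $\|w\langle\cdot\rangle^{-1}\|_{\ell^1}\|h\|_{H^s}\lesssim\eta\|h\|_{H^s}$, after rewriting the factor so that $\langle n\rangle^{-1}$ sits on the short variable when $n-k\ge k$. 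When $n-k<k$ we have $\langle n\rangle^{-1}\le\langle k\rangle^{-1}$ and the bound is direct.
\item The piece $\langle n-k\rangle^{s-1}w_{n-k}$ convolved against $|h_k|$. It is at most $\|\langle\cdot\rangle^{s-1}w\|_{\ell^2}\|h\|_{\W}$. Now $\|\langle\cdot\rangle^{s-1}w\|_{\ell^2}\lesssim d^{3/2-s}$ when $s>1/2$; the borderline $s=1/2$ would produce the logarithm. Multiplying by $d^{s-1/2}$ gives $d\le\eta$ times $\|h\|_{\W}$.
\end{enumerate}
Adding the two pieces gives $\|\mathcal QM_a\|_{\mathcal B^s_t\to\mathcal B^s_t}\lesssim\eta$. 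The term $T_{\bar a}\mathcal Q$ is handled the same way. Its coefficients are $\sum_{k\ge n}\bar a_{k-n}(N(k+1)+K)^{-1}h_k$, so the smoothing factor sits at the larger index $k\ge n$ and $\langle n\rangle^{s}\le\langle k\rangle^{s}$. The two pieces then become $\ell^1$ or $\ell^2$ bounds on $w$ times $\langle k\rangle^{-1}$-weighted $h$, and they are controlled as before.

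\emph{The primitive $T_q$.} This is the step I expect to be the main obstacle, because $q$ is not literally of the form $\mathcal QM_a$. Since $a_0=0$, the coefficients are $q_n=a_n/n$ for $n>0$ and $q_{-n}=-\bar a_n/n$, so $|q_n|\le w_{|n|}/|n|$. Then $T_q=T_{q_+}+T_{q_-}$, where $T_{q_+}$ is multiplication by an analytic symbol with coefficients $a_n/n$, and $T_{q_-}$ is an anti-analytic Toeplitz operator.

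The point is that the factor $1/n$ now sits on the symbol rather than on the output frequency. That is enough: the $\W\to\W$ bound is $\|q\|_{\W}\le\sum_n w_n/n\lesssim\eta$, by the same logarithmic count as above. For $H^s$, the tame estimate \eqref{core:tame} gives
\[
 \|T_qh\|_{H^s}\lesssim\|q\|_{\W}\|h\|_{H^s}+\|q\|_{H^s}\|h\|_{\W},
\]
and $d^{s-1/2}\|q\|_{H^s}\lesssim d^{s-1/2}\|\langle\cdot\rangle^{s-1}w\|_{\ell^2}\lesssim d$. This is where $s<3/2$ enters, through the convergence of the small-$n$ part of the sum. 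The projection $\Pi_+$ is bounded on both norms, so the estimate passes to the Toeplitz operator. Combining these gives \eqref{core:primitive}, and the statements for time derivatives follow from the identical envelope bound on $\partial_ta_n$.
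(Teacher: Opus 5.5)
Your proposal is correct and follows the paper's proof essentially step for step. It uses the same envelope sums, the same column-sum bound for the Wiener part, the same Young/Minkowski split for $\mathcal QM_a$ (which produces $\sum_j|a_j|/(j+1)\lesssim\eta$ and $\|\langle\cdot\rangle^{s-1}a\|_{\ell^2}\lesssim d^{3/2-s}$), the same shift-operator bound for $T_{\bar a}\mathcal Q$, and the same tame estimate for $T_q$.

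Two small things need tidying. In piece (i), and likewise for $T_{\bar a}\mathcal Q$, split $\langle n\rangle^s\lesssim\langle k\rangle^s+\langle n-k\rangle^s$ rather than $\langle n\rangle^{s-1}$, and always use $\langle n\rangle^{-1}\le\langle n-k\rangle^{-1}$ (resp.\ $(N(k+1)+K)^{-1}\lesssim\langle k-n\rangle^{-1}$), so that the derivative gain lands on the symbol index, as in the paper. As written, discarding $\langle k\rangle^{-1}$, or weighting $h$ by $\langle k\rangle^{-1}$, loses the smallness. Also, this lemma uses only $s>1/2$, not $s<3/2$.
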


\begin{proof}
Summing \eqref{core:envelope} gives
\begin{equation}\label{core:sum-estimates}
 \|a\|_{\W}\lesssim1,\quad
 \|a\|_{H^s}\lesssim_s d^{1/2-s},\quad
 \sum_{n\geq0}\frac{|a_n|}{n+1}\lesssim\eta,\quad
 \left(\sum_{n\geq0}\langle n\rangle^{2s-2}|a_n|^2\right)^{1/2}
       \lesssim_s d^{3/2-s}.
\end{equation}
The last estimate uses $s>1/2$; the logarithm in the third estimate comes
from the range $1\leq n\leq d^{-1}$.

The matrix of $\mathcal Q M_a$ has uniformly bounded column sums
$\lesssim\eta$. To estimate its Sobolev norm, write its output index as
$n=k+j$. The tame weight inequality, followed by convolution in the first
term and Minkowski's inequality in the second, gives
\[
 \|\mathcal Q M_af\|_{H^s}
 \lesssim_s
 \left(\sum_j\frac{|a_j|}{j+1}\right)\|f\|_{H^s}
 +\left(\sum_j\langle j\rangle^{2s-2}|a_j|^2\right)^{1/2}
       \|f\|_{\W}.
\]
Multiplication by $d^{s-1/2}$ proves the required bound. This argument
also covers $s<1$, for which an estimate in $H^1$ alone would not suffice.

For $T_{\bar a}\mathcal Q$, the contribution of the coefficient $a_j$
maps $f_{n+j}$ to a multiple of $f_{n+j}/[N(n+j+1)+K]$.
The ratio of output to input Sobolev weights is at most one. Summing these
shift operators gives an $H^s$ operator norm bounded by
$C\sum_j|a_j|/(j+1)$. The same argument applies to the Wiener norm.
Finally, \eqref{core:sum-estimates} gives
$\|q\|_{\W}\lesssim\eta$ and
$\|q\|_{H^s}\lesssim_s d^{3/2-s}$; apply \eqref{core:tame} and the
boundedness of $\PP$. All the arguments apply unchanged to $a_t$.
\end{proof}

For intuition, first take $\Psi(z)=z$. Then
\[
 [z^n]F^j=d^j\binom{n-1}{j-1}p^{n-j}\qquad(n\geq j).
\]
Thus every fixed power of $F$, as well as its time derivative, satisfies
the envelope in \eqref{core:envelope}. The general finite-Blaschke case
follows from the partial-fraction expansion
\eqref{core:partial-fractions} below.

The defect in \eqref{core:defect} has bounded, but not small, Wiener norm.
Its first negative-frequency correction is
\begin{equation}\label{core:first-correction}
 H=-\alpha\sqrt Q\,(L|_{\PM})^{-1}\bar F,\qquad V=A+H.
\end{equation}
Lemma~\ref{core:smoothing}, or direct summation of the coefficients,
shows that
\begin{equation}\label{core:correction-bounds}
 \|H\|_{\mathcal B_t^s}+\|H_t\|_{\mathcal B_t^s}\lesssim_s\eta,
 \qquad \|A\|_{\mathcal B_t^s}\lesssim_s1.
\end{equation}
Indeed, division by $L$ divides the $(-n)$th coefficient by $2(K+Nn)$.
Since $LH=-\alpha\sqrt Q\,\bar F$, the residual of $V$ is exactly
\begin{equation}\label{core:residual}
 R=iV_t-(L-M-\alpha)V-|V|^2V
  =iH_t+(M+\alpha)H-\bigl(|A+H|^2(A+H)-|A|^2A\bigr).
\end{equation}
In particular, $\|R\|_{\mathcal B_t^s}\lesssim_s\eta$.

\subsection{Exact coordinates for one concentration point}
\label{core:single-coordinates}

We give the coordinate computation first for $\Psi(z)=z$. Besides making
the spectral calculation explicit, this identifies the local blocks that
reappear at every concentration point. The linearization of
\eqref{core:transformed} at $A$ is
\begin{equation}\label{core:linearization}
 ie_t=Le+\delta e+2g(F+\bar F)e+QB^2\bar e,
 \qquad \delta=M-\alpha.
\end{equation}
The anti-linear $L^2$-isometry $J_0k=z^{-1}\bar k$ identifies nonnegative
Hardy modes with strictly negative modes. We decompose the correction as
\begin{equation}\label{core:coordinates}
 e=\tau+\phi f+J_0k,\qquad
 \tau=a_0+a_1F+a_2F^2,\qquad f,k\in H^s_+.
\end{equation}
Let $\mathscr H^2=\Pi_+L^2(\T)$ be the classical Hardy space. The span of
$1,F,F^2$ is the model space
$K_\phi=\mathscr H^2\ominus\phi\mathscr H^2$, where $\ominus$ denotes
the orthogonal complement in $L^2$. For $v=e_+:=\Pi_+e$ the inverse
formulas are
\begin{equation}\label{core:inverse-coordinates}
 \begin{split}
 a_0&=v(0),\\
 a_1&=\frac{2(v(\bar p)-v(0))-\bar p d\,v'(\bar p)}{\bar p},\\
 a_2&=\frac{\bar p d\,v'(\bar p)-(v(\bar p)-v(0))}{\bar p^2},
 \qquad f=T_{\bar\phi}v.
 \end{split}
\end{equation}
For large $t$, $|p|$ is bounded away from zero. Moreover,
$|v(\bar p)|+d|v'(\bar p)|\lesssim\|v\|_{\W}$.
Together with \eqref{core:tame}, this proves the uniform equivalence
\begin{equation}\label{core:coordinate-norm}
 \|e\|_{\mathcal B_t^s}\asymp
 |a|+\|f\|_{\mathcal B_t^s}+\|k\|_{\mathcal B_t^s}
 =:\|(a,f,k)\|_{\mathcal X_t^s}.
\end{equation}
Here $a=(a_0,a_1,a_2)\in\mathbb C^3$ carries its Euclidean norm,
and $\mathcal X_t^s=\mathbb C^3\times(H_+^s)^2$ carries the displayed
time-dependent norm.

Put $\mathcal C=1+C\beta$. In these coordinates the full linearization,
including its negative-frequency coupling, is
\begin{equation}\label{core:full-system}
 \begin{cases}
 ia_t=\mathcal A_ta+\gamma_t(k),\\
 if_t=\delta f+Q T_{\overline{\mathcal C}^{\,2}}k,\\
 ik_t=-2[N(D+1)+K]k-\delta k-2gT_{F+\bar F}k
              -Q\mathcal C^2f+\mathcal S_ta.
 \end{cases}
\end{equation}
Here $\mathcal A_t$ is real-linear. Its entries are
\begin{align}
 (\mathcal A_ta)_0={}&Ma_0+2gd(a_1+\bar p a_2)
       +Qr^2\bar a_0+Qd(2r+\bar p)\bar a_1
       +Qd(2rp+2-d)\bar a_2,\label{core:A0}\\
 (\mathcal A_ta)_1={}&2ga_0+Qa_1+2ga_2+2Qr\bar a_0
       +Q(2rp+1)\bar a_1+2Qp(rp+1)\bar a_2,\label{core:A1}\\
 (\mathcal A_ta)_2={}&ga_1+(Q+i\mu)a_2
       +Q\bar a_0+Qp\bar a_1+Qp^2\bar a_2.\label{core:A2}
\end{align}
Writing $k_*=k(\bar p)$ and $k_*'=k'(\bar p)$, the remaining finite maps
are
\begin{equation}\label{core:gamma}
 \gamma_t(k)=
 \begin{pmatrix}
 2gd\bar k_*\\
 2gp\bar k_*+(2\alpha-\alpha d-2g\bar p)k_*
                      -\alpha d\bar p k_*'\\
 (2g-\alpha p)k_*+\alpha d k_*'
 \end{pmatrix},
\end{equation}
and
\begin{equation}\label{core:S}
 \begin{gathered}
 \mathcal S_ta=-\overline{c_t(a)}\,\frac{d}{1-pz}
                         -\alpha a_2\,\frac{d^2z}{(1-pz)^2},\\
 c_t(a)=2g(a_0+\bar p a_1+\bar p^2a_2)
                       +\alpha\bar a_1+2g\bar a_2.
 \end{gathered}
\end{equation}

We explain the projection identities behind these formulas. For positive
input $\phi f$, one has
\[
 B^2\overline{\phi f}=z^{-1}(1+C\bar\beta)^2\bar f.
\]
This has only negative modes. The complex-linear concentrated term is
cancelled by $i\phi_t$, and $\phi f$ vanishes at $0$ and at $\bar p$.
Thus its positive evolution is exactly $if_t=\delta f$; it does not force
the finite coordinates. On input $\tau$, use
$F\bar F=d+pF+\bar p\bar F$ and subtract
$a_1iF_t+a_2i(F^2)_t$. The coefficients of $1,F,F^2$ are
\eqref{core:A0}--\eqref{core:A2}.

For negative input $J_0k$, the positive output is
$2g\PP(FJ_0k)+QB^2zk$, and
\[
 \PP(FJ_0k)=(d+pF)\overline{k(\bar p)}.
\]
Applying \eqref{core:inverse-coordinates} gives \eqref{core:gamma}; applying
$T_{\bar\phi}$ gives $QT_{\overline{\mathcal C}^{\,2}}k$.
To compute the reverse coupling, note that
$B(\bar p)=C$ and $B'(\bar p)=d^{-1}$, so
\[
 \PM(B^2\bar F)=C^2\bar F,\qquad
 \PM(B^2\bar F^2)=C^2\bar F^2+2C\bar F.
\]
These identities give \eqref{core:S}. The minus signs in the $k$ equation
come from the anti-linearity of $J_0$. In particular,
$J_0LJ_0=2[N(D+1)+K]$. This proves \eqref{core:full-system}.
Every bounded map in that system is uniformly bounded in
$\mathcal X_t^s$; the bound for $\gamma_t$ uses only the input Wiener norm.

Let $\mathsf M_\infty$ be the real generator of the limiting finite
system $a_t=-i\mathcal A_\infty a$. Its $a_0$ block is
\[
 a_0'=-i(Ma_0+Qr_\infty^2\bar a_0),\qquad
 r_\infty=C-\overline{p_\infty}.
\]
This block is nilpotent because $Q|r_\infty|^2=M$. On the remaining four
coordinates, ordered as
$(\operatorname{Re}a_1,\operatorname{Re}a_2,
  \operatorname{Im}a_1,\operatorname{Im}a_2)$, the generator divided by
$Q$ is
\begin{equation}\label{core:local-matrix}
 \begin{pmatrix}
 -2Cb&-4Cx_*b&2-2Cx_*&4C(1-x_*^2)\\
 -b&2b(C-x_*)&C-x_*&2(1-x_*^2)\\
 -2Cx_*&-4Cx_*^2&2Cb&4Cx_*b\\
 -(C+x_*)&-2x_*^2&b&2b(C+x_*)
 \end{pmatrix}.
\end{equation}
Expanding its characteristic polynomial and using $b^2=1-x_*^2$ gives
$X^2(X-2Cb)^2$. Hence
\begin{equation}\label{core:single-spectrum}
 \det(XI-\mathsf M_\infty)=X^4(X-\Lambda)^2,
 \qquad \|-i\mathcal A_t-\mathsf M_\infty\|\lesssim d(t).
\end{equation}
All its eigenvalues are zero or positive. Therefore its backward
propagator has only polynomial growth.

\subsection{Several concentration points}\label{core:multipole}

The same coordinate decomposition extends to an arbitrary finite
Blaschke product. Let $\lambda_j(t)$ be the solution of
$\Psi(\lambda_j)=\bar p(t)$ converging to $\zeta_j$, and set
\[
 q_j=\overline{\lambda_j},\qquad d_j=1-|q_j|^2,\qquad
 q_{j,\infty}=\bar\zeta_j,\qquad
 b_{j,\infty}=q_{j,\infty}/p_\infty.
\]
The boundary identity
$\zeta_j\Psi'(\zeta_j)/\Psi(\zeta_j)=v_j$ gives
\begin{equation}\label{core:pole-scales}
 d_j=\frac{d}{v_j}+O(d^2),\qquad
 q_j=\bar\zeta_j+O(d),\qquad
 -\frac{d_j'}{d_j}=\Lambda+O(d).
\end{equation}
All these roots are simple for sufficiently large $t$.
The partial fraction decomposition is
\begin{equation}\label{core:partial-fractions}
 F=\sum_{j=1}^JG_j,\qquad
 G_j=b_j\frac{d_jz}{1-q_jz},\qquad
 b_j=\frac{d}{d_j\overline{\Psi'(\lambda_j)}}
       =\frac{q_{j,\infty}}{p_\infty}+O(d).
\end{equation}
For example, the residue at the exterior pole $1/q_j$ gives the equivalent
formula $b_j=dq_j^2/[d_jp^2\Psi'(1/q_j)]$. Reflection across the unit
circle gives the displayed expression. The difference between the two
sides is a constant rational function, and evaluation at zero makes
that constant zero.

Differentiating $\Psi(\lambda_j)=\bar p$ gives
$q_j'=-igb_jd_j$, while $b_j'=O(d)$. In particular,
\begin{equation}\label{core:local-identities}
 \begin{split}
 i(G_j)_t&=\left(i\frac{b_j'}{b_j}-i\mu_j\right)G_j+gG_j^2,
                   \qquad \mu_j=-d_j'/d_j,\\
 G_j\bar G_j&=|b_j|^2d_j+\bar b_jq_jG_j+b_j\bar q_j\bar G_j.
 \end{split}
\end{equation}
Here $\bar b_jq_j\to p_\infty$ and $|b_j|\to1$.
The formulas also prove \eqref{core:envelope} for each fixed same-pole
power and its time derivative.

Products from different poles are smaller. For instance, partial
fractions express $G_jG_k$ and $G_j\bar G_k$, $j\ne k$, as linear
combinations of the associated single-pole profiles and a constant,
with coefficients $O(d)$. The denominators stay bounded away from zero
because the limiting poles are distinct. Repeating this calculation
proves the same assertion for fixed powers and their time derivatives.
Their $\mathcal B_t^s$ norms are therefore $O(d)$.

The model space of $\phi=z\beta^2$ has dimension $2J+1$ and is
\[
 K_\phi=\operatorname{span}\{1,G_1,G_1^2,\ldots,G_J,G_J^2\}.
\]
We use coordinates
\begin{equation}\label{core:multi-coordinates}
 e=\tau+\phi f+J_0k,\qquad
 \tau=a_0+\sum_{j=1}^J(a_{j1}G_j+a_{j2}G_j^2).
\end{equation}
To invert this decomposition, first set $a_0=e_+(0)$. Evaluate
$e_+-a_0$ and its derivative at each $\lambda_j$, multiplying the
derivative by $d_j$. The resulting interpolation matrix has invertible
diagonal $2\times2$ blocks in the limit and off-diagonal blocks $O(d)$.
Indeed, $G_j(\lambda_j)\to\bar p_\infty\ne0$ and
$d_jG_j'(\lambda_j)\to b_{j,\infty}\ne0$, whereas both corresponding
quantities for a different pole are $O(d)$. Finally,
$f=T_{\bar\phi}e_+$. Since
\[
 |e_+(\lambda_j)|+d_j|e_+'(\lambda_j)|\lesssim\|e_+\|_{\W},
\]
this proves the uniform coordinate equivalence
\eqref{core:coordinate-norm}, now with
$a=(a_0,a_{11},a_{12},\ldots,a_{J1},a_{J2})\in\mathbb C^{2J+1}$
and $\mathcal X_t^s=\mathbb C^{2J+1}\times(H_+^s)^2$.

The positive projected linearization preserves this moving finite space.
One can see this directly from its poles. The only third-order exterior
poles on input $\tau$ arise from $2gF\tau$; they cancel the third-order
terms in $i\tau_t$ by \eqref{core:local-identities}. The projected
anti-linear term $\PP(QB^2\bar\tau)$ has exterior poles of order at most
two. After the cancellation, the remaining rational function belongs to
$K_\phi$. Thus there is no finite-to-$f$ coupling. Conversely, the
identities used for $\phi f$ above are still exact, so there is no
$f$-to-finite coupling either.

It follows that \eqref{core:full-system} remains valid, with finite maps
$\mathcal A_t,\gamma_t,\mathcal S_t$ of the appropriate dimensions.
For clarity, these maps are specified without an implicit truncation:
$\mathcal A_t$ is obtained by expressing the positive output on $\tau$,
minus $i\tau_t$ with its coefficients held fixed, in the basis of
\eqref{core:multi-coordinates}; $\gamma_t(k)$ is the finite coordinate
of $2g\PP(FJ_0k)+QB^2zk$; and
\[
 \mathcal S_ta=-J_0\PM\bigl(2g\bar F\tau+QB^2\bar\tau\bigr).
\]
The finite extraction maps are bounded by the Wiener norm. The last
expression is a sum of transforms of $\bar G_j,\bar G_j^2$ with bounded
coefficients; it and its time derivative satisfy the envelope
\eqref{core:envelope}.

The limiting finite matrix can also be read locally. Interactions
between different poles vanish, and its global block is the nilpotent
$a_0$ block above. At each pole the remaining equations are
\begin{align*}
 i\dot a_{j1}={}&Qa_{j1}+2ga_{j2}
 +(2gp_\infty-Q)\bar a_{j1}+2gp_\infty^2\bar a_{j2}
                 +2ga_0+2Qr_\infty\bar a_0,\\
 i\dot a_{j2}={}&ga_{j1}+(Q+i\Lambda)a_{j2}
                 +Qp_\infty\bar a_{j1}+Qp_\infty^2\bar a_{j2}
                 +Q\bar a_0.
\end{align*}
These are the limiting equations \eqref{core:A1}--\eqref{core:A2}:
use $r_\infty p_\infty+1=Cp_\infty$. The rate of convergence is $O(d)$ by
\eqref{core:pole-scales}--\eqref{core:local-identities} and the uniform
interpolation bounds. Ordering $a_0$ first gives a lower triangular
matrix with $J$ copies of \eqref{core:local-matrix}. Thus
\begin{equation}\label{core:multi-spectrum}
 \det(XI-\mathsf M_\infty)=X^{2J+2}(X-\Lambda)^{2J},\qquad
 \|e^{-\tau\mathsf M_\infty}\|\lesssim(1+\tau)^m
 \quad(\tau\geq0)
\end{equation}
for some integer $m$. This is the finite-dimensional estimate needed
for the final-value problem.

\subsection{An exact small normal form}\label{core:normal-form}

The finite source in the negative equation is not small, but gains a
derivative when inverted against its principal operator. Set
\begin{equation}\label{core:shear}
 k=\kappa+K_s(t)a,\qquad
 K_s(t)=\tfrac12\mathcal Q\mathcal S_t.
\end{equation}
Lemma~\ref{core:smoothing} gives
$\|K_s\|+\|K_s'\|\lesssim_s\eta$ as maps from the finite space to
$\mathcal B_t^s$. The two terms
$-2[N(D+1)+K]K_sa+\mathcal S_ta$ cancel exactly. Every other new term
contains $K_s$ or $K_s'$ and a bounded factor, hence is $O(\eta)$.
This calculation is made over the real vector space, so it also applies
to the real-linear map $\mathcal S_t$.

We equip the Hardy pair $Y=(f,\kappa)$ with the sum of its two
$\mathcal B_t^s$ norms. Unlabelled operator norms in this subsection
refer to this pair norm, or to the corresponding maps to and from
the finite Euclidean space. Up to the small terms above, $Y$ solves
$iY_t=(H_0+\mathsf C(t)+\mathsf E(t))Y$, where
\begin{equation}\label{core:hardy-blocks}
 \begin{gathered}
 H_0=\begin{pmatrix}0&0\\0&-2[N(D+1)+K]\end{pmatrix},\qquad
 \mathsf C(t)=\begin{pmatrix}
 \delta&Q\overline{\mathcal C_0}^{\,2}\\
 -Q\mathcal C_0^2&-\delta
 \end{pmatrix},\qquad \mathcal C_0=1-C\bar p,\\
 V_1=\mathcal C^2-\mathcal C_0^2,\qquad W_1=F+\bar F,\qquad
 \mathsf E(t)=\begin{pmatrix}
 0&QT_{\bar V_1}\\-QM_{V_1}&-2gT_{W_1}
 \end{pmatrix}.
 \end{gathered}
\end{equation}
Since $\mathcal C=\mathcal C_0+CF$, the symbol $V_1$ is a fixed linear
combination of $F,F^2$. Define
\begin{equation}\label{core:normal-form-map}
 \mathsf K(t)=\begin{pmatrix}
 0&-\frac Q2 T_{\bar V_1}\mathcal Q\\
 -\frac Q2\mathcal Q M_{V_1}&-\frac gN T_{D^{-1}W_1}
 \end{pmatrix}.
\end{equation}
The mean of $W_1$ is zero. Lemma~\ref{core:smoothing} gives
\begin{equation}\label{core:K-small}
 \|\mathsf K\|+\|\mathsf K_t\|\lesssim_s\eta.
\end{equation}
More importantly, this choice solves the commutator equation exactly:
\begin{equation}\label{core:commutator}
 [H_0,\mathsf K]=-\mathsf E.
\end{equation}
The off-diagonal identities follow by cancellation of $\mathcal Q$
against its inverse, and the diagonal identity is
$[D,T_q]=T_{Dq}$.

For large $t$, $I+\mathsf K$ is invertible. Substitute
$Y=(I+\mathsf K)w$. After \eqref{core:commutator}, the remainder beyond
$H_0+\mathsf C(t)$ is the bounded operator
\begin{equation}\label{core:NF-remainder}
 (I+\mathsf K)^{-1}
 \bigl([\mathsf C(t),\mathsf K]+\mathsf E\mathsf K-i\mathsf K_t\bigr),
\end{equation}
whose norm is $O(\eta)$. There is no uncancelled differentiation of an
unknown in this formula. To justify the calculation on generator domains,
the lower-left entry of $\mathsf K$ gains one derivative, the commutator
of its lower-right entry with $D$ is bounded, and its upper-right entry
composed with $N(D+1)+K$ is bounded. Thus $I+\mathsf K$ and its inverse
preserve the domain $H^s_+\times H^{s+1}_+$; the identities follow first
on that domain and then for mild solutions. This also follows from the
Neumann series and the bounded commutator \eqref{core:commutator}:
if $q=\|\mathsf K\|<1$, then
$\|[H_0,\mathsf K^j]\|\le j q^{j-1}\|\mathsf E\|$, which is
summable in $j$. Hence the inverse series converges in the graph norm
of $H_0$ as well.

Since $\mathsf C(t)-\mathsf C(\infty)=O(d)$, write $Z=(a,w)$ for the
finite coordinate and the transformed Hardy pair. The complete
linearization takes the form
\begin{equation}\label{core:triangular}
 Z_t=\mathcal G_\triangle(t)Z+\mathcal R(t)Z,\qquad
 \mathcal G_\triangle(t)=
 \begin{pmatrix}\mathsf M_\infty&\mathfrak b_t\\
                    0&-iH_\infty\end{pmatrix},\qquad
 H_\infty=H_0+\mathsf C(\infty),
\end{equation}
where $\|\mathcal R(t)\|_{\mathcal X_t^s\to\mathcal X_t^s}
\lesssim_s\eta(t)$.
The upper-right map $\mathfrak b_t:(H_+^s)^2\to\mathbb C^{2J+1}$
couples the Hardy pair to the finite coordinates and is bounded by the Wiener norm of the
Hardy pair. It need not converge in operator norm. All coordinate changes
and their inverses are uniformly bounded. Replacing the linearization at
$A$ by the one at $V=A+H$ contributes only another $O(\eta)$ operator,
by \eqref{core:correction-bounds} and \eqref{core:tame}.

\subsection{Backward propagation and the nonlinear final-value problem}
\label{core:final-value}

We now estimate how the principal system propagates data backward
from a late terminal time. At Fourier index $n\geq0$, the matrix of
$H_\infty$ is
\begin{equation}\label{core:fourier-matrix}
 B_n=\begin{pmatrix}
 \delta&Q\overline{\mathcal C_{0,\infty}}^{\,2}\\
 -Q\mathcal C_{0,\infty}^2&-2[N(n+1)+K]-\delta
 \end{pmatrix}.
\end{equation}
Here $\mathcal C_{0,\infty}=1-C\bar p_\infty$.
Because $Q|\mathcal C_{0,\infty}|^2=M$, its eigenvalues are
\begin{equation}\label{core:hardy-spectrum}
 -N(n+1)-K\ \pm\omega_n,\qquad
 \omega_n=\sqrt{[N(n+1)-K][N(n+1)-K+2M]}.
\end{equation}
They are real. Write $h_n=N(n+1)+K$. The identity
$(B_n+h_nI)^2=\omega_n^2I$ gives, when $\omega_n>0$,
\[
 e^{-itB_n}=e^{ith_n}
 \left(\cos(\omega_nt)I
       -i\frac{\sin(\omega_nt)}{\omega_n}(B_n+h_nI)\right).
\]
The matrix quotient is uniformly bounded over these indices. If $N=K$
and $n=0$, then $\omega_0=0$ and $(B_0+h_0I)^2=0$; the corresponding
formula is $e^{ith_0}(I-it(B_0+h_0I))$. Thus for every fixed $s$,
\begin{equation}\label{core:hardy-group}
 \|e^{-itH_\infty}\|_{\W^2\to\W^2}
 +\|e^{-itH_\infty}\|_{(H^s)^2\to(H^s)^2}
 \lesssim_s1+|t|.
\end{equation}
In particular, the possible Jordan mode gives polynomial growth, not an
exponential instability.

Let $U_\triangle(t,\tau)$ denote the propagator of
$\mathcal G_\triangle(t)$. Combining \eqref{core:multi-spectrum} and
\eqref{core:hardy-group}, and integrating the bounded upper-right
coupling, gives for $\tau\geq t\geq T$
\begin{equation}\label{core:backward}
 \|U_\triangle(t,\tau)Y\|_{\mathcal X_t^s}
 \lesssim_s(1+\tau-t)^A
 e^{\Lambda(s-1/2)(\tau-t)}\|Y\|_{\mathcal X_\tau^s}.
\end{equation}
Here $A\ge0$ is a fixed integer determined by the finite-dimensional
Jordan structure.
Indeed, the finite and Wiener components lose only a polynomial; the
additional factor comes from
$[d(t)/d(\tau)]^{s-1/2}\leq e^{\Lambda(s-1/2)(\tau-t)}$.
On the other hand,
\begin{equation}\label{core:eta-ratio}
 \frac{\eta(\tau)}{\eta(t)}
 \lesssim(1+\tau-t)e^{-\Lambda(\tau-t)}.
\end{equation}
Consequently the final-value Green operator
\begin{equation}\label{core:green}
 (\mathcal JF)(t)=-\int_t^\infty U_\triangle(t,\tau)F(\tau)\,d\tau
\end{equation}
is bounded on the space of continuous trajectories
\[
 \mathfrak X_T^s=\left\{Z\in
 C([T,\infty);\mathbb C^{2J+1}\times(H_+^s)^2):
 \|Z\|_{\mathfrak X_T^s}:=
 \sup_{t\ge T}\eta(t)^{-1}\|Z(t)\|_{\mathcal X_t^s}<\infty\right\}.
\]
The letter $F$ in \eqref{core:green} denotes an arbitrary forcing in
this trajectory space, not the rational profile in \eqref{core:Fbeta}.
The normalized integrable kernel is a polynomial times
$e^{-\Lambda(3/2-s)(\tau-t)}$. This is where the restriction $s<3/2$
enters the construction. At each fixed $t$, the integral is a convergent
Bochner integral in $H^s$. The finite-interval variation-of-constants
identity and its locally uniformly convergent tail show that the output
is continuous and solves the inhomogeneous equation in the mild sense.

Now write $v=V+e$. Its exact equation is
\begin{equation}\label{core:error-equation}
 ie_t=(L-M-\alpha)e+2|V|^2e+V^2\bar e-R
                   +2V|e|^2+\bar V e^2+|e|^2e.
\end{equation}
Apply the preceding coordinate changes. The result is
\begin{equation}\label{core:reduced-nonlinear}
 Z_t=\mathcal G_\triangle(t)Z+\mathcal R(t)Z+\mathcal F(t)
                                        +\mathcal N(t,Z),
\end{equation}
Here $\mathcal F$ is the transformed residual forcing, and
$\mathcal N$ contains the terms of degree at least two in $Z$. They obey
\begin{equation}\label{core:nonlinear-estimates}
 \begin{gathered}
 \|\mathcal R(t)\|\lesssim_s\eta(t),\qquad
 \|\mathcal F(t)\|_{\mathcal X_t^s}\lesssim_s\eta(t),\\
 \|\mathcal N(t,Z_1)-\mathcal N(t,Z_2)\|_{\mathcal X_t^s}
 \lesssim_s(\|Z_1\|_{\mathcal X_t^s}+\|Z_2\|_{\mathcal X_t^s})
                 \|Z_1-Z_2\|_{\mathcal X_t^s}
 \end{gathered}
\end{equation}
when the two norms are at most one. These estimates follow directly from
the uniform algebra estimate and the bounded coordinate changes.
The fixed-point equation is
\begin{equation}\label{core:fixed-point}
 Z=\mathcal J\bigl(\mathcal RZ+\mathcal F+\mathcal N(\cdot,Z)\bigr).
\end{equation}
On a ball of radius $R_0$ in $\mathfrak X_T^s$, its right-hand
side is bounded by $C_0+C_1\eta(T)(R_0+R_0^2)$, and its Lipschitz constant
is at most $C_2\eta(T)(1+R_0)$. First choose $R_0>2C_0$, then $T$ large.
This proves existence and uniqueness in that ball and gives
$\|e(t)\|_{\mathcal B_t^s}\lesssim_s\eta(t)$.

It remains to verify that the contractions at different Sobolev exponents
select a single solution. Start with $s=1$. For any fixed
$1/2<s<3/2$, repeat the contraction using the sum of the
$\mathcal X_t^1$ and $\mathcal X_t^s$ norms. The preceding estimates hold
in this sum, and its Green kernel has exponential decay rate
$\Lambda(3/2-\max\{1,s\})>0$. This produces a solution satisfying both
bounds. On a sufficiently late common interval, uniqueness identifies it
with the $H^1$ solution: both satisfy \eqref{core:fixed-point} there, and a ball
containing both is a contraction ball after increasing its starting time.
To see that no final-value condition is lost, their boundary terms
$U_\triangle(t,\tau)Z(\tau)$ tend to zero as $\tau\to\infty$ by
\eqref{core:backward} with $s=1$ and \eqref{core:eta-ratio}.
Finite-time uniqueness then identifies the solutions on every common
interval. Taking a countable sequence of exponents increasing to $3/2$
and using Sobolev inclusion proves simultaneous membership in all the
stated spaces. The direct sum construction also supplies the precise
bound \eqref{core:error} for each $s<1$.

At every finite time the transformations preserve Sobolev regularity, and
the constructed trajectory satisfies the mild equation by the
finite-interval variation-of-constants formula. Undoing
\eqref{core:transform} gives an $H^1$ datum for the original equation.
Global well-posedness and persistence of regularity extend it to all
real times; see \cite[Proposition~1]{GG12}. Uniqueness identifies this
extension with the future solution constructed above. The transformation
also proves the asserted lattice support. Equations
\eqref{core:correction-bounds} and \eqref{core:coordinate-norm} now give
\eqref{core:error}.

\subsection{Asymptotics, invariants, and prescribed geometry}
\label{core:consequences}

For one pole, summation at the scale $n\sim d_j^{-1}$ gives
\begin{equation}\label{core:one-pole-asymptotic}
 \left\|\frac{d_jz}{1-q_jz}\right\|_{H^s}^2
       \sim\Gamma(2s+1)d_j^{1-2s}.
\end{equation}
The cross terms from different poles are lower order. For example,
after removing their coefficients they are sums of
$n^{2s}(q_j\bar q_k)^{n-1}$; the arguments of $q_j\bar q_k$ stay away
from zero modulo $2\pi$. Abel summation bounds these sums by
$O(d^{-2s})$, whereas the diagonal sums have order $d^{-2s-1}$.
Use $|b_j|\to1$ and $d_j\sim d/v_j$ in
\eqref{core:partial-fractions}. The covering multiplies the leading
Sobolev factor by $N^s$, while the fixed carrier changes only lower-order
weights. This proves the profile asymptotic in \eqref{core:growth}.
The error \eqref{core:error} is smaller by a factor $O(d\ell)$, so the
same asymptotic holds for $u$.

Composition with $\Psi$ preserves boundary integrals: the mean of every
nonzero power of $\Psi$ or $\bar\Psi$ is zero because $\Psi(0)=0$.
It multiplies the analytic Dirichlet integral by $J$, by the
change-of-variables formula for the proper map $\Psi$ of the disk.
Consequently,
\begin{equation}\label{core:profile-invariants}
 \|A\|_2^2=M,\qquad
 \sum_{n\geq0}n|A_n|^2=JQ,\qquad
 \|A\|_4^4=M^2+2\alpha Qd.
\end{equation}
The carrier coefficient has squared modulus $M-Qd$. Thus the kinetic
quantity of the physical profile is
$NJQ+KM-2KQd$, and its $d$ term cancels the $d$ term of the quartic
energy. The profile stays bounded in $H^{1/2}$ and in $L^\infty$.
Since the shadowing error tends to zero in $H^1$, it also tends to zero
in $H^{1/2}$. Mass, momentum, and energy are continuous on bounded
$H^{1/2}$ sets, using $H^{1/2}(\T)\subset L^4(\T)$ for the quartic
term. Their differences therefore tend to zero. Conservation for the exact
solution proves \eqref{core:invariants} and completes the proof of
Theorem~\ref{core:main}.

\begin{proof}[Proof of Theorem~\ref{intro:allmass}]
Take $K=N=J=1$, $\Psi(z)=z$, $M=M_0$, and $Q=M_0+2$ in
Theorem~\ref{core:main}. Then $\Lambda^2=8M_0$ and
$b^2=M_0/(M_0+2)$. Equations \eqref{core:growth} and
\eqref{core:invariants} give all the assertions, with
\[
 c_{M_0,s}=\sqrt{(M_0+2)\Gamma(2s+1)}
             \left(\frac{4M_0}{M_0+2}\right)^{1/2-s}.
\]
\end{proof}

The choice $Q=M+\alpha$ maximizes $\Lambda$ at fixed $M,K$, since
$\Lambda^2=4\alpha M-(Q-M-\alpha)^2$.
The small-mass statement does not imply small energy: the energy in
Theorem~\ref{intro:allmass} tends to one as $M_0\to0$.

\begin{corollary}[Prescribed centers and relative widths]
\label{core:geometry}
Let $\zeta_1,\ldots,\zeta_J$ be arbitrary distinct points of the unit
circle, and let $w_j>0$ with $\sum_jw_j=1$. The Blaschke product in
Theorem~\ref{core:main} can be chosen so that its concentration points
are these $\zeta_j$ and $v_j=1/w_j$. For $N=1$ the physical centers are
$x=t+\arg\zeta_j+O(d)$, and their relative width scales are $w_jd$.
\end{corollary}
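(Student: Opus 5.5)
The plan is to build $\Psi$ from the Clark measure of the prescribed data, using a Herglotz integral. The normalization $\sum_jw_j=1$ is forced: for a finite Blaschke product with $\Psi(0)=0$ and unimodular $c$, the Clark measure $\sum_{\Psi(\zeta)=c}|\Psi'(\zeta)|^{-1}\delta_\zeta$ has total mass $(1-|\Psi(0)|^2)/|c-\Psi(0)|^2=1$. This suggests reversing the correspondence.

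\emph{Construction.} Put $c=\overline{p_\infty}$, which is unimodular since $x_*^2+b^2=1$. Let $\mu=\sum_jw_j\delta_{\zeta_j}$, a probability measure, and define
\[
 H(z)=\sum_{j=1}^Jw_j\frac{\zeta_j+z}{\zeta_j-z},\qquad
 \Psi(z)=c\,\frac{H(z)-1}{H(z)+1}.
\]
Then $H$ is rational with simple poles at the $\zeta_j$, satisfies $\operatorname{Re}H>0$ on $\mathbb D$, has $H(0)=1$, and $\operatorname{Re}H=0$ on the circle away from the $\zeta_j$.

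\emph{$\Psi$ is an admissible Blaschke product.}
\begin{enumerate}
 \item Since $\operatorname{Re}H>0$ on $\mathbb D$, we have $H+1\neq0$ there, so $\Psi$ is analytic on a neighbourhood of $\overline{\mathbb D}$ minus finitely many points. It also extends continuously to the $\zeta_j$ with value $c$.
 \item On the circle, $|H-1|=|H+1|$ because $H$ is purely imaginary there. Hence $|\Psi|=1$ on $\partial\mathbb D$.
 \item A rational inner function is a finite Blaschke product. Its degree is the number of solutions of $\Psi=c$, which are exactly the poles of $H$, i.e.\ the $J$ points $\zeta_j$. So the degree is $J$, and $\Psi(\zeta)=\overline{p_\infty}$ has precisely the prescribed roots $\zeta_j$.
 \item $\Psi(0)=0$ because $H(0)=1$.
\end{enumerate}

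\emph{Widths.} Write $\Psi=c\bigl(1-2/(H+1)\bigr)$, so $\Psi'=2cH'/(H+1)^2$. Near $\zeta_j$ we have
\[
 H\approx\frac{2w_j\zeta_j}{\zeta_j-z},\qquad H'\approx\frac{2w_j\zeta_j}{(\zeta_j-z)^2}.
\]
Substituting, the singular factors $(\zeta_j-z)^{-2}$ cancel and $\Psi'(\zeta_j)=c/(w_j\zeta_j)$. Thus $v_j=|\Psi'(\zeta_j)|=1/w_j$, as required. This limit computation is the only step needing care; alternatively one can cite the Poisson-kernel identity $\zeta\Psi'(\zeta)/\Psi(\zeta)=|\Psi'(\zeta)|$ together with the Clark-measure formula.

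\emph{Geometric interpretation.} Take $N=1$ in \eqref{core:transform}, so $z=e^{i(x-t)}$. The profile \eqref{core:profile} concentrates through the single-pole pieces $G_j$ of \eqref{core:partial-fractions}, whose interior poles sit at $\lambda_j=\overline{q_j}$. By \eqref{core:pole-scales}, $\lambda_j=\zeta_j+O(d)$, so the physical center is
\[
 x=t+\arg\zeta_j+O(d).
\]
Also by \eqref{core:pole-scales}, $d_j=d/v_j+O(d^2)=w_jd+O(d^2)$. Since $d_jz/(1-q_jz)$ is a Poisson-type bump of width $d_j$, the relative width scales are $w_jd$, as claimed.
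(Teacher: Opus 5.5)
Your proposal is correct and follows the paper's proof essentially step for step. It uses the same Herglotz function $H$, the same Cayley transform with $c=\overline{p_\infty}$, the same pole expansion giving $\zeta_j\Psi'(\zeta_j)/\overline{p_\infty}=1/w_j$, an equivalent count of the preimages of $c$ to get degree $J$, and the same appeal to \eqref{core:pole-scales} for centers and widths. One small slip: the poles of $G_j$ lie at the exterior points $1/q_j$, and $\lambda_j=\overline{q_j}$ is their interior reflection rather than a pole; since both have the same argument, your conclusion is unaffected.
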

\begin{proof}
Set
\[
 h(z)=\sum_{j=1}^Jw_j\frac{\zeta_j+z}{\zeta_j-z},\qquad
 \Psi(z)=\bar p_\infty\,\frac{h(z)-1}{h(z)+1}.
\]
The real part of $h$ is positive in the disk and $h(0)=1$.
Its Cayley transform is a rational inner function of degree $J$,
vanishing at zero. Expansion at each simple pole of $h$ gives
\[
 \Psi(\zeta_j)=\bar p_\infty,\qquad
 \frac{\zeta_j\Psi'(\zeta_j)}{\bar p_\infty}=\frac{1}{w_j}.
\]
Apply \eqref{core:pole-scales}. The degree is exactly $J$, since the
$J$ distinct boundary preimages displayed here are all simple and the
defining rational function has degree at most $J$.
\end{proof}

\begin{corollary}[Arbitrarily many points on one invariant surface]
\label{core:same-shell}
For every $J\geq1$ and every choice of centers and weights in
Corollary~\ref{core:geometry}, there is a concentrating solution with
\[
 M(u)=2,\qquad P(u)=0,\qquad E(u)=3.
\]
Its $H^s$ growth exponent is $a_J(2s-1)$, where
$a_J=\sqrt{4J-1}/J$.
\end{corollary}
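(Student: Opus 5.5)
The plan is to specialize Theorem~\ref{core:main} by solving the three invariant equations \eqref{core:invariants} for the scalar parameters. The geometry will then be supplied by the Blaschke product of Corollary~\ref{core:geometry}. No new analysis should be needed: the whole statement reduces to an admissibility check and one arithmetic computation of $\Lambda$.

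First I fix the scalars. With $M=2$, the energy formula gives
\[
 E=\frac{NJQ+KM}{2}+\frac{M^2}{4}=\frac{NJQ+2K}{2}+1 .
\]
Requiring $P=NJQ-2K=0$ forces $NJQ=2K$, hence $E=2K+1$. So $E=3$ forces $K=1$. I take $N=1$, which satisfies $N\ge K$ and keeps the physical centers in the form stated in Corollary~\ref{core:geometry}. Then $Q=2/J$ and $\alpha=2$. The admissibility condition \eqref{core:admissible} reads
\[
 \Lambda^2=4\cdot2\cdot\frac2J-\Bigl(2+\frac2J-2\Bigr)^2
 =\frac{16}{J}-\frac{4}{J^2}=\frac{4(4J-1)}{J^2}>0
\]
for every $J\ge1$. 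Hence $\Lambda=2\sqrt{4J-1}/J$, and all derived quantities $x_*,b,p_\infty$ in \eqref{core:parameters} are well defined, with $|p_\infty|=1$ since $x_*^2+b^2=1$.

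Next I choose the geometry. Given distinct centers $\zeta_j$ and weights $w_j>0$ with $\sum_j w_j=1$, I apply Corollary~\ref{core:geometry} with the value $p_\infty$ just computed. This gives a degree-$J$ Blaschke product $\Psi$ with $\Psi(0)=0$, whose boundary preimages of $\overline{p_\infty}$ are exactly the $\zeta_j$, with $v_j=1/w_j$. The construction in that corollary works for any unimodular $p_\infty$, so nothing further needs checking. Theorem~\ref{core:main} then yields a global solution with $(M,P,E)=(2,0,3)$ that concentrates at the prescribed centers with the prescribed relative widths.

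Finally, the growth exponent. By \eqref{core:growth}, $\|u(t)\|_{H^s}$ grows like $e^{\Lambda(s-1/2)t}$, and
\[
 \Lambda\Bigl(s-\frac12\Bigr)=\frac{\sqrt{4J-1}}{J}(2s-1)=a_J(2s-1).
\]
The only step needing care is the bookkeeping that $K=1$ is forced and that admissibility holds for every $J$. I do not expect a genuine obstacle here.
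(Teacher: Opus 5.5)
Your proposal is correct and follows the paper's proof: you take $K=N=1$, $M=2$, $Q=2/J$, verify $\Lambda^2=4(4J-1)/J^2>0$, read off $(M,P,E)=(2,0,3)$ from \eqref{core:invariants}, and supply the geometry through Corollary~\ref{core:geometry} with the resulting $p_\infty$. Your additional observation that $M=2$, $P=0$, $E=3$ force $K=1$ is correct but not needed for the statement.
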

\begin{proof}
Choose $K=N=1$, $M=2$, and $Q=2/J$. Then
$b^2=1-1/(4J)$ and $\Lambda=2\sqrt{4J-1}/J$.
The admissibility condition holds for every $J$, and
\eqref{core:invariants} gives the stated conserved quantities.
\end{proof}

\section{Endpoint Fourier regularity}\label{sec:endpoint}

\subsection{Upper bounds}\label{sec:endpointupper}
The Sobolev estimates in Theorem~\ref{core:main} stop short of
$H^{3/2}$. To understand this endpoint, we estimate the Fourier
coefficients individually.
Write $\mathcal A_2=\{v:\|v\|_{\mathcal A_2}<\infty\}$ and recall
the logarithmic spaces from \eqref{intro:logspace}. Their norms are
\[
 \|v\|_{\mathcal A_2}=\sup_{n\in\mathbb Z}\langle n\rangle^2|v_n|,
 \qquad
 \|v\|_{\mathcal H_\sigma}^2
 =\sum_{n\in\mathbb Z}
 \frac{\langle n\rangle^3}{\log^{2\sigma}(e+|n|)}|v_n|^2.
\]
\begin{theorem}[Endpoint upper bounds]\label{endpoint}
The solutions selected by the final-value construction in
Theorem~\ref{core:main} belong to $\mathcal A_2$ at every finite time.
As $t\to\infty$, they satisfy
\begin{equation}\label{A2error}
 \|u(t)-U_\Psi^{(0)}(t)\|_{\mathcal A_2}\lesssim\ell(t).
\end{equation}
In particular, the data belong to $B^{3/2}_{2,\infty}$ and to every
$\mathcal H_\sigma$, $\sigma>1/2$. For each such $\sigma$,
\begin{gather}
 \|u(t)-U_\Psi^{(0)}(t)\|_{\mathcal H_\sigma}
       \lesssim_\sigma\ell(t)^{1-\sigma},\label{logerror}\\
 \|u(t)\|_{\mathcal H_\sigma}
 \sim N^{3/2}\left(6Q\sum_{j=1}^Jv_j^2\right)^{1/2}
              d(t)^{-1}\ell(t)^{-\sigma}.\label{loggrowth}
\end{gather}
The error in \eqref{logerror}, divided by the leading norm in
\eqref{loggrowth}, is $O(d(t)\ell(t))$.
\end{theorem}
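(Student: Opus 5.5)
The plan is to redo the final-value contraction of Section~\ref{core:final-value} in a weighted Fourier-coefficient norm, instead of the Sobolev norm, at the endpoint regularity. Concretely, replace $\mathcal B_t^s$ by
\[
 \|h\|_{\mathcal E_t}=\|h\|_{\W}+d(t)\,\|h\|_{\mathcal A_2},
\]
which plays the role of the $s=3/2$ member of the family. The residual $R$ from \eqref{core:residual} is essentially $iH_t+(M+\alpha)H$ plus products of $H$ with the profile. The coefficients of $H$ are the coefficients of $\bar F$ divided by $2(K+Nn)$. The envelope \eqref{core:envelope} therefore gives $|[H]_{-n}|\lesssim d(1+n)^{-1}e^{-cdn}$. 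Hence $\langle n\rangle^2|H_{-n}|\lesssim \sup_n d\,n\,e^{-cdn}\lesssim 1$, and $\|H\|_{\mathcal E_t}\lesssim d\ell$ up to the logarithm already present in the Wiener norm. The goal is to show that the correction $e$ constructed earlier satisfies
\[
 \sup_{t\ge T}\eta(t)^{-1}\|e(t)\|_{\mathcal E_t}<\infty.
\]
Combined with $\|H\|_{\mathcal A_2}\lesssim 1$, this gives $\|u-U^{(0)}\|_{\mathcal A_2}\lesssim d^{-1}\eta=\ell$, which is \eqref{A2error}. Persistence of the $\mathcal A_2$ property at all finite times then follows by running the half-wave flow on bounded intervals. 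A cubic product of $H^1$ functions gains decay in the $\mathcal A_2$ scale, and Duhamel against $e^{-it|D|}$, which is unitary on each coefficient, preserves $\mathcal A_2$ given an $\mathcal A_2$ datum.

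The key steps, in order, are as follows.
\begin{enumerate}
 \item Check the algebra and tame inequality in $\mathcal A_2$. Since $\langle n\rangle^2\lesssim\langle n-k\rangle^2+\langle k\rangle^2$, we have
 \[
  \|fh\|_{\mathcal A_2}\lesssim\|f\|_\W\|h\|_{\mathcal A_2}+\|h\|_\W\|f\|_{\mathcal A_2},
 \]
 so $\mathcal E_t$ is a uniform algebra. Both Hardy projections act diagonally on coefficients and are bounded.
 \item Redo Lemma~\ref{core:smoothing} with $\mathcal A_2$ replacing $H^s$. For $\mathcal Q M_a$, the output coefficient at $n=k+j$ has weight $\langle n\rangle^2/(n+1)\lesssim\langle k\rangle^2/(n+1)+\langle j\rangle$. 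This produces the bound
 \[
  \Bigl(\sum_j |a_j|/(j+1)\Bigr)\|f\|_{\mathcal A_2}+\Bigl(\sup_j\langle j\rangle|a_j|\Bigr)\|f\|_\W .
 \]
 The second factor is $\sup_j dj e^{-cdj}\lesssim 1=d^{-1}\cdot d$, so after the weight $d$ the operator norm is still $\lesssim\eta$. The other operators $T_{\bar a}\mathcal Q$ and $T_q$ are handled in the same way. Consequently the coordinate changes of Sections~\ref{core:single-coordinates}--\ref{core:normal-form}, and the bounds on $\mathcal R,\mathcal F,\mathcal N$ in \eqref{core:nonlinear-estimates}, carry over verbatim.
 \item Prove the backward propagation estimate \eqref{core:backward} in $\mathcal E_t$. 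The matrices $B_n$ act on each Fourier index separately with a bound $\lesssim 1+|t|$, so \eqref{core:hardy-group} holds in $\mathcal A_2$. The weight then contributes $d(t)/d(\tau)\le e^{\Lambda(\tau-t)}$. Against \eqref{core:eta-ratio}, the kernel is only a polynomial times $e^{0}$, which is not integrable.
\end{enumerate}

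Step~(iii) is the main obstacle: the endpoint is exactly where the Green operator \eqref{core:green} stops being bounded. To get around this, I would not close a full contraction in $\mathcal E_t$. Instead I would bootstrap from the already constructed solution. Take the known bounds $\|e\|_{\mathcal B^s_t}\lesssim\eta$ for $s<3/2$ and substitute them into the nonlinear and $\mathcal R$ terms. Only the linear forcing $\mathcal F$, together with products involving at most one factor at top regularity, needs the endpoint norm. For those terms I would estimate coefficients pointwise in $n$. At frequency $n$, the forcing coefficient is $\lesssim \eta(\tau)\,d(\tau)^{-1}\langle n\rangle^{-2}\min\{1,(d(\tau)n)^{-1}\}$-type, since there is exponential decay beyond $n\sim d^{-1}$. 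Integrating in $\tau$ the factor $e^{-cd(\tau)n}$ confines the contribution to times $\tau\lesssim\Lambda^{-1}\log n$. This yields a time integral of length $\approx\ell$ rather than a divergent one. The result should be the bound $\langle n\rangle^2|e_n(t)|\lesssim\ell(t)$, which is \eqref{A2error}. The finite-dimensional block and the Wiener part are unchanged, since they only see polynomial losses.

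The remaining claims follow from \eqref{A2error} and the explicit profile.
\begin{itemize}
 \item $\mathcal A_2\subset B^{3/2}_{2,\infty}\cap\mathcal H_\sigma$ for $\sigma>1/2$, because $\sum\langle n\rangle^{-1}\log^{-2\sigma}$ converges.
 \item For \eqref{logerror}, split the sum at $|n|\sim d^{-1}$. At high frequencies the bound is $\ell^2\sum_{n>d^{-1}}n^{-1}\log^{-2\sigma}n\lesssim\ell^{2-2\sigma}$. The low range is controlled by the $H^s$ error bounds with $s$ close to $3/2$; equivalently, the low-frequency coefficients decay faster.
 \item For \eqref{loggrowth}, compute the profile directly. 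Its coefficients near pole $j$ are $\approx d_j q_j^{n}$, so
 \[
  \sum_n n^3\log^{-2\sigma}(n)\,d_j^2e^{-2d_jn}\sim \Gamma(4)\,2^{-4}d_j^{-2}\,\ell^{-2\sigma}.
 \]
 Use $d_j\sim d/v_j$, the covering factor $N^{3/2}$, and the fact that cross terms are lower order, as in \eqref{core:one-pole-asymptotic}. The constant $6=\Gamma(4)$ then appears after the $4b^2$-free normalization through $d$.
 \item The ratio of the error in \eqref{logerror} to the leading norm in \eqref{loggrowth} is $\ell^{1-\sigma}\,d\,\ell^{\sigma}=d\ell$, as claimed.
\end{itemize}
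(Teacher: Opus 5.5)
\textbf{Gap in your step (iii) workaround.} You are right that the Green operator is unbounded in $\W+d\,\mathcal A_2$, and that Fourier modes must be integrated one at a time. But the envelope you posit, $\ell(\tau)\langle n\rangle^{-2}\min\{1,(d(\tau)n)^{-1}\}$, is not integrable at fixed $n$. The factor $e^{-cd(\tau)n}$ suppresses the \emph{early} times $\tau\lesssim\Lambda^{-1}\log n$, not the late ones. At late times integrability can only come from an explicit factor $d(\tau)$, as in the true envelope $\frac{d}{n+1}e^{-cdn}$.

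More seriously, with the single correction $H$ the residual is not of that form. Its negative part contains the analytic convolution $\Pi_+(Fh)$, where $h$ is the $k$-coordinate of $H$. For one pole, $h_k\propto dp^k/(k+\nu)$ and $F_m=dp^{m-1}$. The phases therefore align:
\[
 [z^n](Fh)\propto d^2p^{n-1}\sum_{k<n}(k+\nu)^{-1},
\]
which has size $d^2e^{-cdn}\log n$. Since $\int_t^\infty d(\tau)^2e^{-cd(\tau)n}\,d\tau\asymp n^{-2}$ for $n\gg d(t)^{-1}$, integrating envelopes gives $n^{-2}\log n$. That yields no $\mathcal A_2$ bound at all, let alone \eqref{A2error}.

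\textbf{The missing idea.} You need to use the fast rotation $-2[N(D+1)+K]$ of the negative equation a second time. The paper subtracts $j=\frac12\mathcal Q r_h$ as in \eqref{hj}, which divides this source by $n+1$. After that, the forcing obeys \eqref{Ff}--\eqref{Fk} and has mode integrals $O((n+1)^{-2})$ by \eqref{modeintegral}.

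The constant group is uniformly bounded on modes $n\ge1$. The linear and nonlinear remainders are bounded in $\mathcal A_2$ by $\eta\|w\|_{\mathcal A_2}+d\ell^2$. These bounds involve $\|w\|_{\mathcal A_2}$ itself, because the diagonal normal-form entry gains no derivative, so the $s<3/2$ bounds cannot simply be substituted. With these ingredients one closes a contraction for the \emph{unweighted} quantity $\sup_t\|P_{\ge1}w(t)\|_{\mathcal A_2}$. Hence the transverse part is $O(1)$ in $\mathcal A_2$. The $\ell$ in \eqref{A2error} comes only from reconstruction: finite coordinates of size $\eta$ multiplying rational symbols of $\mathcal A_2$ norm $d^{-1}$. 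It does not come from a time integral ``of length $\ell$''.

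\textbf{The derived estimates need more than \eqref{A2error}.}
\begin{itemize}
\item From $|e_n|\lesssim\ell n^{-2}$ alone one gets $\ell^2\sum_{n>d^{-1}}n^{-1}\log^{-2\sigma}n\asymp\ell^{3-2\sigma}$, not $\ell^{2-2\sigma}$. That gives only $\ell^{3/2-\sigma}$ in \eqref{logerror}, and relative error $d\ell^{3/2}$.
\item The sharp rate uses the decomposition above. The bound $|w_n|\lesssim\min(\eta,(n+1)^{-2})$, split at $n\sim\eta^{-1/2}$, gives $\ell^{1/2-\sigma}$. The $\ell$-sized part consists of explicit rational symbols, exponentially decaying beyond $n\sim d^{-1}$, with $\mathcal H_\sigma$ norm $d^{-1}\ell^{-\sigma}$ times coefficients $O(\eta)$. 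That contributes $\ell^{1-\sigma}$.
\item In \eqref{loggrowth}, $d_j=1-|q_j|^2$ gives $|[z^n]G_j|^2\approx d_j^2e^{-d_jn}$, not $d_j^2e^{-2d_jn}$. The Riemann sum is then exactly $\Gamma(4)d_j^{-2}\ell^{-2\sigma}$, with no factor $2^{-4}$ to explain away.
\item A cubic product of $H^1$ functions is not in $\mathcal A_2$. Persistence at finite times requires the tame bound \eqref{A2product}, the bounded Wiener norm of the global $H^1$ solution, and Gronwall, justified by truncating the datum.
\end{itemize}
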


\begin{proof}
We work in the coordinates and normal form developed in
Section~\ref{core:construction}, denoting the finite coordinate vector
$a$ by $\gamma$. The Fourier index $n$ initially refers to the
co-moving variable in \eqref{core:transform}; its physical frequency is
$m=-K+Nn$. The map $n\mapsto m$ is one-to-one, so the coefficient norms
in the two variables are equivalent for fixed $K,N$. At high positive
frequencies the logarithmic Sobolev weight has leading factor $N^3$.
The endpoint estimate is obtained by integrating the forcing mode by
mode before taking the weighted supremum.

\paragraph{Tame estimates in the coefficient norm.}
The inequality
$\langle n\rangle^2\lesssim\langle k\rangle^2+\langle n-k\rangle^2$
implies
\begin{equation}\label{A2product}
 \|uv\|_{\mathcal A_2}\lesssim
 \|u\|_\W\|v\|_{\mathcal A_2}+\|u\|_{\mathcal A_2}\|v\|_\W.
\end{equation}
For a concentrated analytic symbol with
$|a_n|\lesssim d(1+dn)^{m_0}e^{-c_0dn}$, where $c_0>0$ and the
integer $m_0\ge0$ are fixed,
$\|a\|_{\mathcal A_2}\lesssim d^{-1}$ and $\|a\|_\W\lesssim1$.
With $\mathcal Q=[N(D+1)+K]^{-1}$,
\begin{align}
 \|\mathcal Q M_a f\|_{\mathcal A_2}
   &\lesssim d\|f\|_{\mathcal A_2}+\|f\|_\W,\label{A2smooth1}\\
 \|T_{\bar a}\mathcal Q f\|_{\mathcal A_2}
   &\lesssim\eta\|f\|_{\mathcal A_2},\qquad\eta=d\ell.\label{A2smooth2}
\end{align}
For \eqref{A2smooth1}, split the output convolution into $k\le n/2$
and $k>n/2$. In the first part
$(n+1)|a_{n-k}|\lesssim1$, so use $\|f\|_\W$. In the second part
$|f_k|\lesssim(n+1)^{-2}\|f\|_{\mathcal A_2}$ and
$(n+1)^{-1}\sum_{j\le n/2}|a_j|\lesssim d$.
For \eqref{A2smooth2}, the bound is
$\|f\|_{\mathcal A_2}\sum_{j\ge0}|a_j|/(n+j+1)\lesssim
\eta\|f\|_{\mathcal A_2}$.
The primitive $D^{-1}(a+\bar a)$, when $a_0=0$, has Wiener norm
$O(\eta)$ and $\mathcal A_2$ norm $O(1)$. Consequently the normal form
and its derivative obey
\begin{equation}\label{A2K}
 \|\mathsf Kw\|_{\mathcal A_2}+\|\mathsf K_tw\|_{\mathcal A_2}
 \lesssim\eta\|w\|_{\mathcal A_2}+\|w\|_\W.
\end{equation}
To invert this map, one must use both norms. Increasing a fixed
constant $C_1$ if necessary and writing $q=C_1\eta<1$, the Wiener
estimate and \eqref{A2K} imply
\[
 \|\mathsf K^r w\|_{\mathcal A_2}
 \le q^r\|w\|_{\mathcal A_2}
       +C_1r q^{r-1}\|w\|_{\W},\qquad r\ge1.
\]
Indeed, this follows by induction together with
$\|\mathsf K^r w\|_\W\le q^r\|w\|_\W$.
Summing the series proves that $(I+\mathsf K)^{-1}-I$ obeys the same
bound as \eqref{A2K}. Smallness on $\mathcal A_2$ alone is not required.

\paragraph{Two negative corrections.}
For this calculation it is convenient to write the equation for the
error $e=v-A$ from the analytic profile, rather than the error from
$V=A+H$. Thus
\[
 e=\tau+\phi f+J_0k,
 \qquad J_0k=z^{-1}\bar k.
\]
Let $\Gamma(t)$ be the map denoted by $\gamma_t$ in
\eqref{core:full-system}, with its multipole interpretation from
Section~\ref{core:multipole}. It maps the negative Hardy coordinate
$k$ to the finite-coordinate equation. Its argument is always displayed;
$\Gamma(t)$ is not a time derivative.
The equation for $k$ contains the additional source
$-\alpha\sqrt Q\,z^{-1}F$, with the minus sign coming from the
anti-linearity of $J_0$. Define
\begin{equation}\label{hj}
 \begin{split}
 h&=-K\sqrt Q\,\mathcal Q(z^{-1}F),\\
 r_h&=-ih_t-\delta h-2gT_Wh,\qquad
 j=\tfrac12\mathcal Qr_h,\qquad W=F+\bar F,\quad\delta=M-\alpha.
 \end{split}
\end{equation}
These are analytic functions since $F(0)=0$. The first correction is
exactly the one used in Section~\ref{core:construction}: $J_0h=H$.
Writing $k=h+j+\kappa$ cancels the two fast sources exactly:
$-2\mathcal Q^{-1}h-\alpha\sqrt Q\,z^{-1}F=0$ and
$-2\mathcal Q^{-1}j+r_h=0$.
The remaining explicit coordinate forcing is
\begin{equation}\label{explicitforcing}
 F_\gamma=\Gamma(t)(h+j),\quad
 F_f=QT_{\overline{\mathcal C}^{\,2}}(h+j),\quad
 F_\kappa=-ij_t-\delta j-2gT_Wj.
\end{equation}

For one pole $h_n$ is a constant multiple of $dp^n/(n+\nu)$,
$\nu=1+K/N$. For finitely many poles it is a finite sum with uniformly
comparable scales and the same coefficient bounds. All needed time
derivatives satisfy
\[
 |\partial_t^rh_n|\lesssim\frac d{n+1}(1+dn)^{m_0} e^{-c_0dn}.
\]
Here $r$ may be any fixed nonnegative integer, with constants
depending on $r$. To see the effect of $T_Wh$, absorb the polynomial
factors into a slightly weaker exponential. The analytic and
coanalytic convolutions are bounded, respectively, by
\[
 d^2 e^{-c_0dn}\sum_{k=0}^{n-1}\frac1{k+1}
       \lesssim d^2\ell(1+dn)e^{-c_0dn},
 \qquad
 d^2 e^{-c_0dn}\sum_{k\ge0}
       \frac{e^{-c_0dk}}{n+k+1}
       \lesssim d^2\ell e^{-c_0dn}.
\]
In the first bound, use $\log(e+n)\lesssim\ell+dn$.
One more division by $n+1$ comes from $\mathcal Q$ in the definition
of $j$. The same calculation applies to the time derivatives, since
the differentiated symbols satisfy the same type of envelope.
For finitely many poles, sum their envelopes; the fixed scales
$d_j\asymp d$ give the same estimates for all cross terms.
After decreasing $c_0$ and increasing the fixed polynomial degree
if needed, for $r=0,1$,
\begin{equation}\label{jenvelope}
 |\partial_t^rj_n|\lesssim
 \left(\frac d{(n+1)^2}+\frac{d^2\ell}{n+1}\right)
 (1+dn)^{m_0} e^{-c_0dn}.
\end{equation}
This also implies exponentially weighted Wiener estimates
$\sum_n e^{c_1dn}(|j_n|+|(j_t)_n|)\lesssim d$ for $0<c_1<c_0$.
In particular,
$\|j\|_\W+\|j_t\|_\W+\|j\|_{H^1}+\|j_t\|_{H^1}\lesssim d$.

Coanalytic Toeplitz convolution lowers indices and preserves the envelope
of $h$. Convolution of a concentrated symbol with the exponentially
weighted Wiener bound for $j$ gives coefficients $O(d^2)$ at scale
$d^{-1}$. Thus
\begin{align}
 |(F_f)_n|&\lesssim\frac d{n+1}(1+dn)^{m_0} e^{-c_0dn},\label{Ff}\\
 |(F_\kappa)_n|&\lesssim
 \left(\frac d{(n+1)^2}+\frac{d^2\ell}{n+1}+d^2\right)
 (1+dn)^{m_0} e^{-c_0dn}.\label{Fk}
\end{align}
The finite forcing is $O(\eta)$. The infinite forcing has Wiener norm
$O(\eta)$, $H^1$ norm $O(\sqrt d)$, and $\mathcal A_2$ norm $O(1)$.
The second correction therefore preserves the bounds in the
$H^1$ trajectory space and improves the coefficientwise forcing.

\paragraph{Integrating the Fourier coefficients.}
Apply the finite-source shear and infinite normal form of
Section~\ref{core:construction} to
$(\gamma,f,\kappa)$, and call the final Hardy pair $w$. In these
endpoint coordinates we write $Z=(\gamma,w)$. The explicit
source differs from $(F_f,F_\kappa)$ only by a term integrable in
$\mathcal A_2$, of size $O(\eta)$. This follows from \eqref{A2K},
the Wiener bound on the source, and the finite shear.
Let $\mathcal R_\infty$ and $\mathcal N_\infty$ denote, respectively,
the Hardy-pair components of the linear and nonlinear remainders in
these endpoint coordinates.
For the operator remainder,
\begin{equation}\label{Rhigh}
 \|\mathcal R_\infty Z\|_{\mathcal A_2}
 \lesssim\eta\|w\|_{\mathcal A_2}
            +\ell(|\gamma|+\|w\|_\W).
\end{equation}
Indeed, a concentrated multiplier has tame bound
$\|\mathsf Ez\|_{\mathcal A_2}\lesssim\|z\|_{\mathcal A_2}
+d^{-1}\|z\|_\W$; combine it with \eqref{A2K} in the exact remainder
$(I+\mathsf K)^{-1}([\mathsf C(t),\mathsf K]+\mathsf E\mathsf K-i\mathsf K_t)$,
where $\mathsf C(t)$ and $\mathsf E(t)$ are the spatially constant and
concentrated parts in \eqref{core:hardy-blocks}. The difference between a scalar
coefficient and its limit is $O(d)$ and satisfies the same estimate.
The finite shear is bounded by $O(|\gamma|)$ in $\mathcal A_2$ and is
covered by \eqref{Rhigh}.

On a fixed ball of radius $R_0>0$ in the $H^1$ trajectory space,
$|\gamma|+\|w\|_\W\lesssim\eta$.
Reconstruction and \eqref{A2product} give
\[
 \|e\|_\W\lesssim\eta,\qquad
 \|e\|_{\mathcal A_2}\lesssim\|w\|_{\mathcal A_2}+\ell,
\]
and hence for the quadratic and cubic remainder,
\begin{equation}\label{Nhigh}
 \|\mathcal N_\infty(t,Z)\|_{\mathcal A_2}
 \lesssim_{R_0}\eta\|w\|_{\mathcal A_2}+d\ell^2.
\end{equation}
The corresponding difference estimate has the same integrable
coefficients.

For $n\ge1$ the constant Hardy group is bounded uniformly in both time
and $n$. The only possible Jordan mode, $n=0$ when $N=K$, is already
controlled by the Wiener norm. Since $-d'/d$ is bounded above and below
at late times, substituting $r=d(s)$ into \eqref{Ff}--\eqref{Fk} gives
\begin{equation}\label{modeintegral}
 \int_t^\infty\bigl(|(F_f)_n(s)|+|(F_\kappa)_n(s)|\bigr)\,ds
       \lesssim(n+1)^{-2},\qquad n\ge1.
\end{equation}
After absorbing the fixed polynomial into a weaker exponential, the
four integrals to bound are
\[
 \frac1{n+1}\int_0^{d(t)}e^{-c_0nr}\,dr,\quad
 \frac1{(n+1)^2}\int_0^{d(t)}e^{-c_0nr}\,dr,
\]
\[
 \frac1{n+1}\int_0^{d(t)}r(1+|\log r|)e^{-c_0nr}\,dr,\quad
 \int_0^{d(t)}r e^{-c_0nr}\,dr.
\]
The third is $O(\log(e+n)/(n+1)^3)$; all are $O((n+1)^{-2})$.
The group does not mix Fourier indices, so \eqref{modeintegral} proves
the uniform $\mathcal A_2$ bound for its explicit final-value integral.

Add $\sup_{t\ge T}\|P_{\ge1}w(t)\|_{\mathcal A_2}$ to the
$H^1$ trajectory norm, where $P_{\ge1}$ projects onto analytic
Fourier indices $n\ge1$ on both Hardy components, whose norms are
added. Under $J_0$, the coefficient $k_n$ becomes the mode $-(n+1)$
in the co-moving variable, and hence the physical mode
$-K-N(n+1)$; this fixed shift does not affect the coefficient estimate.
We use $H^1$-continuous trajectories with
this additional bound; strong continuity in the weighted
$\ell^\infty$ norm $\mathcal A_2$ is unnecessary.
The space is complete, and the integral is defined in $H^1$, with
the additional estimates justified coefficientwise.
The forcing has bounded combined norm. Equations \eqref{Rhigh},
\eqref{Nhigh} and their difference estimates give a contraction for
large $T$, because
\[
 \int_T^\infty d(s)\ell(s)\,ds\longrightarrow0,\qquad
 \int_T^\infty d(s)\ell(s)^2\,ds\longrightarrow0.
\]
This gives $\sup_{t\ge T}\|w(t)\|_{\mathcal A_2}<\infty$.
The resulting solution belongs to the decaying $H^1$ class of
Section~\ref{core:construction}. After reversing the coordinate
changes, its final boundary term tends to zero under the backward
propagator estimate. It therefore satisfies the same final-value
integral equation, whose uniqueness identifies it with the solution
of Theorem~\ref{core:main}.
Reconstruction proves \eqref{A2error}.

At earlier finite times, persistence in $\mathcal A_2$ follows from
\eqref{A2product} and the bounded Wiener norm on compact time intervals
of the global $H^1$ solution. One rigorous implementation is to truncate
the datum smoothly, use $H^1$ continuous dependence, apply the
$\mathcal A_2$ Gronwall estimate uniformly, and pass to the limit
coefficientwise. No strong continuity of the linear group on
$\mathcal A_2$ is needed.

\paragraph{Logarithmic endpoint estimates.}
If $\|w\|_{\mathcal A_2}\lesssim1$ and $\|w\|_\W\lesssim\eta$, then
$|w_n|\lesssim\min(\eta,(n+1)^{-2})$. Splitting at
$n\sim\eta^{-1/2}$ proves
\[
 \|w\|_{\mathcal H_\sigma}\lesssim_\sigma\ell^{1/2-\sigma},
 \qquad\sigma>1/2.
\]
Each rational coordinate symbol has $\mathcal H_\sigma$ norm
$O(d^{-1}\ell^{-\sigma})$ and bounded Wiener norm. Its product with
a low coordinate costs at most
$d^{-1}\ell^{-\sigma}\eta=\ell^{1-\sigma}$.
The tame logarithmic-weight product inequality follows from the
doubling weight $\langle n\rangle^{3/2}/\log^\sigma(e+|n|)$,
made monotone by harmless equivalence at finitely many indices.
This proves \eqref{logerror}.
Finally, the slowly varying Riemann sum
\[
 d^4\ell^{2\sigma}\sum_{n\ge1}
 \frac{n^3(1-d)^{n-1}}{\log^{2\sigma}(e+n)}
       \longrightarrow\Gamma(4)=6
\]
and the separated-pole argument prove \eqref{loggrowth}.
The relative error is $O(d\ell)$.
The inclusion $\mathcal A_2\subset B^{3/2}_{2,\infty}$ follows by
summing the squared bound $|u_n|\lesssim\langle n\rangle^{-2}$
over each dyadic block.

For the single-pole family there is additionally the sharp asymptotic
\[
 \|u(t)\|_{\mathcal A_2}\sim16e^{-2}N^2\sqrt Q\,d(t)^{-1}.
\]
Indeed the maximum of $n^2e^{-dn/2}$ occurs at $n\sim4/d$.
\end{proof}

\subsection{Sharpness of the logarithmic endpoint}\label{sec:endpoint-sharp}

Fix integers $1\le K\le N$, put $\alpha=2K$, and consider
\[
 \Omega_K=\{(M,Q)\in(0,\infty)^2:
  \Lambda^2=4\alpha Q-(\alpha+Q-M)^2>0\}.
\]
To shorten the formulas in this subsection, write $x=x_*$ and
$p_*=p_\infty$. The scalar $\kappa$ below is unrelated to the negative
Hardy coordinate used earlier. We retain the definitions
\[
 C=\sqrt{\alpha/Q},\quad x=\frac{\alpha+Q-M}{2\sqrt{\alpha Q}},\quad
 b=\sqrt{1-x^2},\quad p_*=x-ib,\quad \Lambda=2\sqrt{\alpha Q}\,b,
\]
and set
\[
 \delta=M-\alpha,\qquad \omega=\frac\delta\Lambda,\qquad
 \kappa=\frac12-\frac{ix}{2b},\qquad A_*=1-Cp_*,\qquad D_*=4b^2.
\]
Define
\begin{equation}\label{tailgen:coefficient}
 \mathfrak c=-\frac{iKQ\sqrt Q}{N\Lambda}\Gamma(2-i\omega)
 \int_0^1\frac{y^{-i\omega}(A_*+Cp_*y)^2}
 {[1-(1-\kappa)y]^{2-i\omega}}\,dy.
\end{equation}
Complex powers use the principal logarithm; the denominator has positive
real part. We use the spaces $\mathcal H_\sigma$ defined in
Section~\ref{sec:endpointupper}.

\begin{theorem}[Sharpness of the endpoint]\label{tailgen:theorem}
Let $\Psi$ be any finite Blaschke product with $\Psi(0)=0$.
If $\mathfrak c\ne0$, the solution of Theorem~\ref{core:main} associated
with $(K,N,M,Q,\Psi)$ does not belong to $\mathcal H_\sigma$ at any finite
time for $\sigma\le1/2$. In particular, it is not in $H^{3/2}$.
The coefficient $\mathfrak c$ is nonzero for every admissible parameter
with $M=2K$, that is, for $0<Q<8K$.
For each fixed $K,N$, its nonzero set is open and dense in $\Omega_K$,
and its complement has Lebesgue measure zero.
\end{theorem}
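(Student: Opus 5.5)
The plan is to show that at large times the Fourier tail of $u(t)$ is exactly of endpoint size: along the modes coming from the Hardy coordinate $f$ (and hence from $\phi f$), the coefficients behave like $\mathfrak c\,(\text{unimodular phase})\,n^{-2}$, up to an explicit constant depending on $N,\Psi$, plus $o(n^{-2})$. Since
\[
 \sum_n\frac{n^3}{\log^{2\sigma}(e+n)}\,n^{-4}=\infty\qquad(\sigma\le1/2),
\]
this excludes $\mathcal H_\sigma$ at that late time. Finite times then follow from persistence. The tame inequality for the doubling weight $\langle n\rangle^{3/2}/\log^\sigma(e+|n|)$, used in the proof of Theorem~\ref{endpoint}, together with the bounded Wiener norm of the global $H^1$ solution on compact intervals, gives propagation of $\mathcal H_\sigma$ regularity in both time directions by the same truncation and Gronwall argument used there for $\mathcal A_2$. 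So membership at any one finite time would force membership at all late times.

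For the tail computation I use the endpoint coordinates from the proof of Theorem~\ref{endpoint}, with the decomposition $k=h+j+\kappa$. The terms $h$ and $j$ are explicit. By \eqref{Fk}, the source $F_\kappa$ for $\kappa$ carries the fast phase $e^{2ih_n s}$, and its integral gains an extra factor $n^{-1}$, so it is $o(n^{-2})$. The decisive mode is therefore the $f$ equation, which has slow frequency $\delta$ and leading source $F_f=QT_{\overline{\mathcal C}^{\,2}}h$. The key steps are as follows.

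\begin{enumerate}
\item Fix $t$ and let $n\to\infty$. The final-value integral $\int_t^\infty e^{i\delta(s-\cdot)}(F_f)_n(s)\,ds$ is dominated by late times with $d(s)\asymp 1/n$. At those times $p(s)$, $b_j$ and $q_j$ may be replaced by their limits up to $O(d(s))=O(1/n)$ relative errors. Consequently the leading constant is independent of $t$ apart from a phase.
\item In the single-pole model, $h_n\propto d\,p^n/(n+\nu)$. Substituting $y$ through $d(s)n$ and using $d\sim 4b^2e^{-\Lambda s}$ turns the integral into a Gamma-type integral. The phase $e^{i\delta s}$ becomes $y^{-i\omega}$ with $\omega=\delta/\Lambda$. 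The factor $p(s)^n/p_*^n$ converges to an explicit exponential in $dn$, which produces the denominator $[1-(1-\kappa)y]^{2-i\omega}$, where $\kappa=\tfrac12-\tfrac{ix}{2b}$ comes from $\tanh$ near $-1$. The symbol $\overline{\mathcal C}^{\,2}$ evaluated along the pole contributes $(A_*+Cp_*y)^2$. The outcome is exactly $\mathfrak c$ in \eqref{tailgen:coefficient}, times $n^{-2}$ and a unimodular phase.
\item For a general $\Psi$, use the partial fractions \eqref{core:partial-fractions}. Each pole contributes the same local integral with width $d_j\sim d/v_j$, and cross-pole terms are $O(d)$ smaller. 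Distinct pole phases $\bar\zeta_j^{\,n}$ do not cancel in $\sum_n n^3\log^{-2\sigma}|u_n|^2$, since by separation the diagonal part dominates on average over dyadic blocks.
\end{enumerate}

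The main obstacle is showing that every other contribution is $o(n^{-2})$ at fixed $t$, and not merely $O(n^{-2})$ as in \eqref{modeintegral}. This covers the operator remainder \eqref{Rhigh}, the nonlinear terms \eqref{Nhigh}, the normal-form map $\mathsf K$, and the $j$ contributions. The bounds \eqref{Rhigh} and \eqref{Nhigh} only give $O(\ell)$ in $\mathcal A_2$. My first attempt would rerun the contraction of Theorem~\ref{endpoint} in a weighted space with norm $\sup_n (n+1)^2\log^{-1}(e+n)\,|\cdot|$ on the difference $w-w_{\rm explicit}$, exploiting the extra factors $d$ or $d\ell$ in all remainder envelopes, as in the third integral of \eqref{modeintegral}, which is $O(\log n/n^3)$. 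This should yield remainders $O(n^{-2}\,n^{-\epsilon})$, or at least $O(\log n/n^{3})$.

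Finally, the nonvanishing claims. On the slice $M=2K$ one has $\delta=0$, hence $\omega=0$. The integral then becomes elementary: the integrand is rational in $y$, with numerator $(A_*+Cp_*y)^2$ over $[1-(1-\kappa)y]^2$. I would evaluate it in closed form and check that it is nonzero for $0<Q<8K$. I expect this to reduce to the nonvanishing of an explicit expression in $x,b$, using $|p_*|=1$ and $Q|A_*|^2=M$. For genericity, $\mathfrak c$ is real-analytic in $(M,Q)$ on $\Omega_K$, since all ingredients and the principal powers depend analytically on the parameters. The set $\Omega_K$ is connected and meets the slice $M=2K$, where $\mathfrak c\ne0$. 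Hence $\mathfrak c$ is not identically zero, and its zero set is a proper real-analytic subset: closed, nowhere dense, and of Lebesgue measure zero.
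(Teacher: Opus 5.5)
Your treatment of the leading term matches the paper: the Mellin computation, the multipole mean-square argument, persistence, and real-analytic genericity. The gap is in the step you call the main obstacle. You try to show that every contribution other than the explicit $h$-forcing of the $f$ equation is $o(n^{-2})$ at a fixed time, and you should not expect this for the operator remainder $(I+\mathsf K)^{-1}([\mathsf C(t),\mathsf K]+\mathsf E\mathsf K-i\mathsf K_t)$ or for the nonlinear terms. They act on $w$ and $e$, which themselves carry the $n^{-2}$ tail you are computing. Multiplication by a concentrated symbol passes that tail on with an $O(\eta(s))$ coefficient rather than improving it. After integration over $[T,\infty)$ one is left with an $n^{-2}$ tail of size $O(d(T)\ell(T)^2)$: small, but in general not $o(n^{-2})$.

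Your proposed fix cannot repair this. The weight $\sup_n(n+1)^2\log^{-1}(e+n)|\cdot|$ is weaker than $\mathcal A_2$, so it does not even recover $O(n^{-2})$, let alone $O(n^{-2-\epsilon})$. You also undersell \eqref{Rhigh}--\eqref{Nhigh}. With $|\gamma|+\|w\|_\W\lesssim\eta$ and $\|w\|_{\mathcal A_2}\lesssim1$, they give remainder forcing of size $O(d\ell^2)$ in $\mathcal A_2$, which is integrable in time.

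The missing idea is to trade an $o(n^{-2})$ bound for smallness in the terminal time. Write $(w_f)_n(T)$ as in \eqref{tailgen:main}: the explicit integral, plus $o_T(n^{-2})$, plus $\varepsilon_n(T)$ with $\sup_n n^2|\varepsilon_n(T)|\le Cd(T)\ell(T)^2$ and $C$ independent of $T$. Only the explicit pieces need to be $o_T(n^{-2})$:
\begin{enumerate}
\item the $j$-forcing;
\item the negative forcing passed through $(U_n)_{12}=O(1/n)$;
\item the replacement of $(U_n)_{11}$ by $e^{i\delta(s-T)}$.
\end{enumerate}
Each of these is $O_T((1+\log n)n^{-3})$ directly from \eqref{jenvelope} and \eqref{tailgen:group}.

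Your step (i) already contains the other ingredient. The leading term $n^{-2+i\omega}e^{-i\delta T}D_*^{i\omega}\mathfrak c\,S_n$ has modulus independent of $T$, and $\frac1L\sum_{n=L}^{2L-1}|S_n|^2\to J$. Choose $T$ so late that $Cd(T)\ell(T)^2<|\mathfrak c|\sqrt J/8$. Then $|a+b|^2\ge|a|^2/2-|b|^2$ on dyadic blocks makes the $\mathcal H_\sigma$ sum diverge at that single time, and persistence transfers the conclusion to all finite times.

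Separately, nonvanishing on $M=2K$ is only announced in your proposal. A closed form of the integral involves complex logarithms and is awkward to sign. The paper avoids this: $z(y)=(A_*+Cp_*y)/(1-(1-\kappa)y)$ has $\operatorname{Re}z>0$ and $\operatorname{Im}z>0$ on $[0,1]$, so $\operatorname{Im}\int_0^1z^2\,dy>0$.
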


The theorem therefore makes the upper bound in Theorem~\ref{endpoint}
sharp for every finite geometry on the slice $M=2K$, including the
arbitrary-$J$ family on $(M,P,E)=(2,0,3)$, where $K=N=1$ and $Q=2/J$.
Away from this slice, the genericity statement concerns only the scalar
pair $(M,Q)$. At a zero of $\mathfrak c$, the present argument is silent
about endpoint regularity.

\begin{proof}
We compute the leading $n^{-2}$ tail in the positive transverse
coordinate. The estimates of Section~\ref{sec:endpointupper} allow
all other terms to be made smaller than this tail at a sufficiently
late, fixed time. For several poles, a mean-square argument rules out
cancellation.

\paragraph{The one-pole forcing.}
In the twice-corrected coordinates of Section~\ref{sec:endpointupper},
let $w=(w_f,w_k)$ be the Hardy pair after the normal form. With
$\nu=1+K/N$, the first negative correction and its principal positive
forcing satisfy
\[
 h_n=-\frac{K\sqrt Q}{N}\frac{dp^n}{n+\nu},\qquad
 g_h=QT_{\overline{\mathcal C}^{\,2}}h,
 \qquad \mathcal C=1-C\bar p+CF.
\]
Here $T_a f=\Pi_+(af)$, $F=dz/(1-pz)$, and the bar on
$\mathcal C$ in the Toeplitz symbol denotes boundary conjugation.
The integral formula for $(n+\nu)^{-1}$ gives exactly
\begin{equation}\label{tailgen:forcing}
 (g_h)_n=-\frac{KQ\sqrt Q}{N}dp^n
 \int_0^1r^{n+\nu-1}
 \left(1-Cp+\frac{Cdpr}{1-(1-d)r}\right)^2dr.
\end{equation}

The limiting Fourier matrix and its backward propagator are
\[
 B_n=\begin{pmatrix}
 \delta&Q\overline{\mathcal C_{0,*}}^{\,2}\\
 -Q\mathcal C_{0,*}^{\,2}&-2[N(n+1)+K]-\delta
 \end{pmatrix},\qquad
 U_n(\tau)=e^{i\tau B_n},\qquad \mathcal C_{0,*}=1-C\bar p_*.
\]
For $n\ge1$, the slow eigenvalue is $\delta+O(n^{-1})$, the other
is $-2Nn+O(1)$, and the spectral projectors give
\begin{equation}\label{tailgen:group}
 \|U_n(\tau)\|\le C,\quad
 |(U_n)_{11}-e^{i\delta\tau}|
 \le C(\tau/n+n^{-2}),\quad |(U_n)_{12}|\le C/n.
\end{equation}
These bounds hold for all $\tau\ge0$; the only possible Jordan block
is at $n=0$ and does not affect the tail.

\paragraph{The remaining terms.}
The bounds established in Section~\ref{sec:endpointupper} are
\[
 |\gamma|+\|w\|_{\mathcal W}\le Cd\ell,\qquad
 \|w\|_{\mathcal A_2}\le C.
\]
The normal-form remainder is bounded in $\mathcal A_2$ by
$C d\ell\|w\|_{\mathcal A_2}
 +C\ell(|\gamma|+\|w\|_{\mathcal W})$.
Reconstruction gives physical error of Wiener norm $O(d\ell)$ and
$\mathcal A_2$ norm $O(\ell)$, so the nonlinear remainder is
$O(d\ell^2)$ in $\mathcal A_2$.
The finite shear and the normal-form change of explicit forcing have
the same bound. Denote the sum of these norm-integrable remainders
by $\mathfrak r$. Then
\[
 \|\mathfrak r(t)\|_{\mathcal A_2}\le Cd(t)\ell(t)^2,
\]
with $C$ independent of sufficiently large starting time $T$.
The second negative correction $j$ satisfies the coefficient bound
\[
 |j_n|+|(j_t)_n|
 \le C\left[\frac d{(n+1)^2}+\frac{d^2\ell}{n+1}\right]
 (1+dn)^{m_0} e^{-cdn}.
\]
Coanalytic Toeplitz multiplication preserves this envelope with adjusted
constants. Consequently its positive forcing has time integral
$O_T((1+\log n)n^{-3})$; the remaining negative forcing has integral
$O(n^{-2})$ and acquires the additional factor $O(n^{-1})$ from
$(U_n)_{12}$. Finally,
\[
 \int_T^\infty |(g_h)_n(s)|\,ds\le Cn^{-2},\qquad
 \int_T^\infty(s-T)|(g_h)_n(s)|\,ds
 \le C_T(1+\log n)n^{-2}.
\]
These estimates follow by $\rho=nd(s)$ and $-d'/d\asymp1$.
There are therefore four errors to keep track of. The positive
$j$-forcing contributes $O_T((1+\log n)n^{-3})$; the negative forcing
contributes $O(n^{-3})$ after the off-diagonal propagator; replacing
the slow propagator by $e^{i\delta(s-T)}$ costs
$O_T((1+\log n)n^{-3})$; and the integral of $\mathfrak r$ is bounded
by $Cd(T)\ell(T)^2n^{-2}$. Together with \eqref{tailgen:group},
these estimates give
\begin{equation}\label{tailgen:main}
 (w_f)_n(T)=i\int_T^\infty e^{i\delta(s-T)}(g_h)_n(s)\,ds
       +o_T(n^{-2})+\varepsilon_n(T),\qquad
 \sup_{n\ge1}n^2|\varepsilon_n(T)|\le Cd(T)\ell(T)^2.
\end{equation}
Here $o_T(n^{-2})$ is a sequence whose product with $n^2$ tends to
zero as $n\to\infty$, with $T$ fixed. The constant in $O_T$ may also
depend on this fixed time.

\paragraph{The Mellin coefficient.}
As $d\downarrow0$, the explicit orbit gives
\[
 p/p_*=1-\kappa d+O(d^2),\quad -d'/d\longrightarrow\Lambda,
 \quad d(s)=D_*e^{-\Lambda s}(1+O(e^{-\Lambda s})).
\]
In \eqref{tailgen:forcing}, substitute $\rho=nd(s)$ and $v=n(1-r)$.
Since $e^{i\delta s}=D_*^{i\omega}n^{i\omega}\rho^{-i\omega}(1+o(1))$,
we obtain
\begin{align}
 &\lim_{n\to\infty}n^{2-i\omega}p_*^{-n}e^{i\delta T}
 i\int_T^\infty e^{i\delta(s-T)}(g_h)_n(s)\,ds\notag\\
 &\quad=-\frac{iKQ\sqrt Q}{N\Lambda}D_*^{i\omega}
 \int_0^\infty\!\int_0^\infty
 e^{-\kappa\rho-v}\rho^{-i\omega}
 \left(\frac{\rho+A_*v}{\rho+v}\right)^2dv\,d\rho
 =D_*^{i\omega}\mathfrak c.\label{tailgen:mellin}
\end{align}
Dominated convergence follows from $|p|^n\le e^{-\rho/2}$,
$r^{n+\nu-1}\le e^{-v}$, $|\rho^{-i\omega}|=1$, and the uniform
boundedness of the rational bracket. To obtain the final expression,
set $\rho=Ry$, $v=R(1-y)$ and integrate $R^{1-i\omega}$.

\paragraph{Nonvanishing of the coefficient.}
First suppose $M=\alpha$.
Then $\omega=0$, $2Cx=1$, and, with $\tau=Cb>0$,
\[
 Cp_*=\tfrac12-i\tau,\qquad A_*=\tfrac12+i\tau,
 \qquad \kappa=\tfrac12-\frac{i}{4\tau}.
\]
Writing
\[
 z(y)=\frac{(1+y)/2+i\tau(1-y)}{1-y/2-iy/(4\tau)},
\]
we calculate
\[
 \Re z(y)=\frac{1}{2|1-y/2-iy/(4\tau)|^2}>0,
\]
\[
 \Im z(y)=\frac{\tau(1-y)(1-y/2)+y(1+y)/(8\tau)}
 {|1-y/2-iy/(4\tau)|^2}>0\qquad(0\le y\le1).
\]
Thus $\Im(z(y)^2)>0$, proving that the integral in
\eqref{tailgen:coefficient} is nonzero.
For the generic assertion, the map
\[
 (C,x)\in(0,\infty)\times(-1,1)\longmapsto
 (M,Q)=\left(\alpha+\frac\alpha{C^2}-\frac{2\alpha x}{C},
                    \frac\alpha{C^2}\right)
\]
is a real-analytic diffeomorphism onto $\Omega_K$, which is therefore
connected. The integral defining $\mathfrak c$ is real analytic:
locally complexify $C,x$, preserving the square-root branch for $b$,
positive real part of the denominator, and $|\Im\omega|<\epsilon<1$.
The resulting integral is holomorphic by an integrable bound
$Cy^{-\epsilon}$. Hence $|\mathfrak c|^2$ is a nontrivial real-analytic
function. Its zero set has empty interior and measure zero, giving the
claimed open dense, full-measure set.

\paragraph{Several concentration points.}
Let $J=\deg\Psi$, let
$\Psi(\zeta_j)=\bar p_*$, and put
$v_j=|\Psi'(\zeta_j)|$, $q_{j,*}=\bar\zeta_j$.
The partial fractions and root evolution used in
Section~\ref{core:construction} give
\[
 F=\sum_{j=1}^J b_j\frac{d_jz}{1-q_jz},\quad
 d_j=d/v_j+O(d^2),\quad b_j\longrightarrow b_{j,*}=q_{j,*}/p_*,
 \quad q_j'=-i\sqrt{\alpha Q}\,b_jd_j.
\]
In particular,
\[
 q_j/q_{j,*}=1-\kappa d_j+O(d_j^2),\qquad
 d_j(s)=(D_*/v_j)e^{-\Lambda s}(1+O(e^{-\Lambda s})).
\]
The first correction is now the sum of
$-(K\sqrt Q/N)b_jd_jq_j^n/(n+\nu)$.
In applying the coanalytic Toeplitz operator to its $j$th summand, the
analytic symbol obtained by conjugating the Taylor coefficients of
$\mathcal C$ is evaluated at $q_jr$, giving
\[
 1-Cp+C\sum_l\frac{\bar b_l d_lq_jr}{1-\bar q_lq_jr}.
\]
The terms $l\ne j$ are uniformly $O(d)$ because the limiting points
are distinct. Since $\bar b_{j,*}q_{j,*}=p_*$, the remaining bracket
has the same limit $(\rho+A_*v)/(\rho+v)$ under
$\rho=nd_j$, $v=n(1-r)$. The preceding dominated-convergence argument
therefore applies to each summand. We conclude that
\begin{equation}\label{tailgen:multipole}
 (w_f)_n(T)=n^{-2+i\omega}e^{-i\delta T}D_*^{i\omega}\mathfrak c
 S_n+o_T(n^{-2})+\varepsilon_n(T),\qquad
 S_n=\sum_{j=1}^J b_{j,*}v_j^{-i\omega}q_{j,*}^{\,n},
\end{equation}
where $\sup_n n^2|\varepsilon_n(T)|\le C_\Psi d(T)\ell(T)^2$.
Here the constants may depend on the fixed geometry.

The coefficients in $S_n$ have modulus one and its frequencies are
distinct. Summing finite geometric progressions gives
\[
 \lim_{L\to\infty}\frac1L\sum_{n=L}^{2L-1}|S_n|^2=J.
\]
If $\mathfrak c\ne0$, choose $T$ so late that
$C_\Psi d(T)\ell(T)^2<|\mathfrak c|\sqrt J/8$.
The little-oh term in \eqref{tailgen:multipole} is eventually smaller
than the same bound. Using $|a+b|^2\ge|a|^2/2-|b|^2$ yields
\[
 \liminf_{L\to\infty}\frac1L
 \sum_{n=L}^{2L-1}n^4|(w_f)_n(T)|^2>0.
\]
Consequently, for all sufficiently large integers $m$,
\[
 \sum_{n=2^m}^{2^{m+1}-1}
 \frac{n^3|(w_f)_n(T)|^2}{\log^{2\sigma}n}
 \ge c m^{-2\sigma}.
\]
The sum over $m$ diverges for $\sigma\le1/2$.

\paragraph{Return to the physical solution.}
At fixed $T$, reconstruction gives
$f=w_f+\mathsf K_{12}w_k$. The map
$\mathsf K_{12}=-(Q/2)T_{\bar V_1}\mathcal Q$ sends $H^1$ to $H^2$,
since its symbol is smooth and $\mathcal Q$ gains one derivative.
This property is also preserved when using the inverse normal form:
every negative-to-positive path in its Neumann series contains this
one-smoothing entry. All finite-coordinate terms and explicit
corrections are smooth. After projecting onto nonnegative modes, the
negative component $J_0k$ disappears. Multiplication by $\phi$ is
bounded and invertible on the full space $\mathcal H_\sigma$, with
smooth boundary inverse $\bar\phi$. Since $H^2\subset\mathcal H_\sigma$,
the lower-order terms cannot cancel the failure of regularity of
$w_f(T)$. The frequency conversion $m=-K+Nn$ then transfers it to
the physical solution.
Finally, finite-interval persistence in $\mathcal H_\sigma$, obtained
from the tame product estimate and the bounded $H^1$ norm on compact
time intervals, shows that the solution could not belong to
$\mathcal H_\sigma$ at any other finite time.
\end{proof}

\section{Concentration geometry and defect measures}
\label{sec:concentration-instability}

\subsection{Critical concentration and spatial defect measures}
\label{sec:defects}

The shadowing estimates determine both the profile of each concentrating
peak and the limiting spatial distribution of kinetic energy. We use
the parameters of Theorem~\ref{core:main}:
$\alpha=2K$, $1\le K\le N$, mass $M$, analytic momentum parameter $Q$,
and a degree-$J$ Blaschke product $\Psi$ with $\Psi(0)=0$.
Write $p_\infty=x_*-ib$, $r_\infty=C-\bar p_\infty$,
$\Psi(\zeta_j)=\bar p_\infty$, $\zeta_j=e^{i\theta_j}$ and
$v_j=|\Psi'(\zeta_j)|$. Thus $Q|r_\infty|^2=M$.
In this section, $y=x-t$ is the physical coordinate moving with speed
one; it is not the rescaled coordinate of \eqref{core:transform}.
We reuse $V,W$ locally for the moving solution and its limiting wave:

\[
 V(t,y)=e^{i(M+2K)t}u(t,y+t),\qquad
 W(y)=\sqrt Q\,r_\infty e^{-iKy}.
\]
Theorem~\ref{core:main} and Theorem~\ref{endpoint} give
\begin{equation}\label{def:shadow}
 V=\sqrt Q e^{-iKy}(r(t)+F(t,e^{iNy}))+\mathcal E_{\mathrm{sh}},
 \quad F(t,z)=\frac{d(t)\Psi(z)}{1-p(t)\Psi(z)},
\end{equation}
where, with $\ell=1+|\log d|$,
\begin{equation}\label{def:errors}
 \|\mathcal E_{\mathrm{sh}}\|_{\W}\lesssim d\ell,\qquad
 \|\mathcal E_{\mathrm{sh}}\|_{H^1}\lesssim\sqrt d\,\ell,\qquad
 \sup_n\langle n\rangle^2
 |\widehat{\mathcal E_{\mathrm{sh}}}(n)|\lesssim\ell.
\end{equation}
Replacing $r(t)$ by $r_\infty$ changes the error by $O(d)$ in all these
norms. All spatial integrals below use $dy/(2\pi)$, whereas
$\delta_a$ denotes the Dirac measure of total mass one at $a$.
Weak-$*$ convergence of measures means convergence against every
continuous test function on $\T$. In all sums over $(j,l)$,
$1\le j\le J$ and $0\le l<N$.

\begin{proposition}[Subcritical convergence and critical defects]
\label{prop:defects}
Put $y_{j,l}=(\theta_j+2\pi l)/N$, $0\le l<N$. Then
$V(t)\rightharpoonup W$ in $H^{1/2}$, and
\begin{equation}\label{def:criticalgap}
 \|V(t)-W\|_{\dot H^{1/2}}^2\longrightarrow NJQ.
\end{equation}
For every $0\le s<1/2$,
\begin{equation}\label{def:subcritical}
 \|V(t)-W\|_{H^s}^2\sim
 N^{2s}Q\Gamma(2s+1)
 \left(\sum_{j=1}^Jv_j^{2s-1}\right)d(t)^{1-2s}.
\end{equation}
The following weak limits hold in the space of finite measures:
\begin{align}
 \bigl||D|^{1/2}V\bigr|^2\frac{dy}{2\pi}
 &\rightharpoonup KM\frac{dy}{2\pi}
       +Q\sum_{j,l}\delta_{y_{j,l}},\label{def:kinetic}\\
 \operatorname{Re}(\bar VDV)\frac{dy}{2\pi}
 &\rightharpoonup-KM\frac{dy}{2\pi}
       +Q\sum_{j,l}\delta_{y_{j,l}},\label{def:momentum}\\
 \left(\frac12\bigl||D|^{1/2}V\bigr|^2+\frac14|V|^4\right)
 \frac{dy}{2\pi}
 &\rightharpoonup\left(\frac{KM}{2}+\frac{M^2}{4}\right)
 \frac{dy}{2\pi}+\frac Q2\sum_{j,l}\delta_{y_{j,l}}.
 \label{def:energy}
\end{align}
Also $|V|^2\to M$ and $|V|^4\to M^2$ in $L^1$.
In particular, each of the $NJ$ concentration points carries critical
kinetic mass $Q$, independently of its width.
\end{proposition}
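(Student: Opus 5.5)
Throughout, write $V=W+\Phi+\mathcal E$ with $\Phi(t,y)=\sqrt Q\,e^{-iKy}F(t,e^{iNy})$ and $\mathcal E=\mathcal E_{\mathrm{sh}}+\sqrt Q e^{-iKy}(r-r_\infty)$; by \eqref{def:errors}, $\mathcal E$ is $O(d\ell)$ in $\W$ and $O(\sqrt d\,\ell)$ in $H^1$, hence $o(1)$ in $H^{1/2}$ and in $L^\infty$. All statements then reduce to explicit computations on $\Phi$, using the partial fractions \eqref{core:partial-fractions}. The Fourier coefficients of $F$ are $\sum_j b_jd_jq_j^{n-1}$ for $n\ge1$. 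They are supported at physical frequencies $m=-K+Nn$, with $|b_j|\to1$ and $d_j\sim d/v_j$.

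For the norm statements, $\|\Phi\|_{H^s}^2$ is computed exactly as in \eqref{core:one-pole-asymptotic}. The diagonal sums give $\sum_n (Nn)^{2s}d_j^2|q_j|^{2n}\sim N^{2s}\Gamma(2s+1)d_j^{1-2s}$, valid for every $s\ge0$. The cross terms between different poles are $O(d^{2}\cdot d^{-2s})$ by Abel summation, which is lower order for $s\le1/2$. Substituting $d_j=d/v_j$ gives \eqref{def:subcritical} for $s<1/2$, since the error $\mathcal E$ contributes $o(d^{1/2-s})$ (interpolate between $\W$, which controls $L^2$, and $H^1$). At $s=1/2$ the same sum gives $N\sum_j|b_j|^2\to NJ$ times $Q$, which yields \eqref{def:criticalgap}. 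Here the $-K$ shift only changes lower-order weights, and the cross terms are again $O(d)$. Since $\|\Phi\|_{L^2}\to0$ and $\Phi$ is bounded in $H^{1/2}$, we get $\Phi\rightharpoonup0$ there, hence $V\rightharpoonup W$.

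For the measures, I pair each density with a smooth test function $\chi$, treating the cases $\chi\in C^\infty$ and then passing to continuous $\chi$ by uniform $L^1$-mass bounds. The total masses converge by the conserved quantities \eqref{core:invariants}. Positivity of the kinetic and energy densities then allows density of smooth functions. The momentum density is handled as the difference of the kinetic density and a positive part, or directly from the same computation. Expand $|\,|D|^{1/2}V|^2$ into $W$--$W$, $W$--$\Phi$, $\Phi$--$\Phi$, and error terms. The $W$ part gives $KQ|r_\infty|^2=KM$ times Lebesgue measure. Error terms vanish because $\mathcal E\to0$ in $H^{1/2}$. The cross term $\langle \chi |D|^{1/2}W,|D|^{1/2}\Phi\rangle$ tends to zero by weak convergence.

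The main step is to show that $\int\chi\,||D|^{1/2}\Phi|^2\to Q\sum_{j,l}\chi(y_{j,l})$. I would write $|D|^{1/2}\Phi$ at positive frequencies as $N^{1/2}(D+\text{const})^{1/2}$ acting on a sum of profiles $d_j^{1/2}$-normalized and concentrated at $e^{iNy}=\zeta_j$, i.e.\ at the $N$ points $y_{j,l}$, with width $\asymp d$. A commutator estimate $[|D|^{1/2},\chi]$ bounded on $L^2$, together with $\|\Phi\|_{L^2}\to0$, reduces the problem to $\int\chi|D_N^{1/2}\Phi|^2$. This density is a sum of squared Poisson-type kernels concentrating at $y_{j,l}$, each of mass $Q|b_j|^2/N\cdot N\to Q$. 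The $N$ preimages share the mass because $e^{iNy}$ is $N$-to-one. I expect this localization, including the control of cross terms between poles by separation, to be the main obstacle.

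Momentum follows identically: at positive frequencies $D=|D|$, while $W$ sits at frequency $-K$, which flips the sign of the uniform part. For the energy, $|V|^2\to M$ in $L^1$ because $|W|^2=M$, $\|\Phi\|_{L^2}\to0$, and $V$ is bounded in $L^4$. Boundedness of $\Phi$ in $L^\infty$ alone is not enough here: $\Phi\to0$ in $L^p$ for $p<\infty$ only needs $\|\Phi\|_{L^2}\to0$ together with a uniform $L^\infty$ bound. Hence $|V|^4\to M^2$ in $L^1$, and combining this with \eqref{def:kinetic} gives \eqref{def:energy}.
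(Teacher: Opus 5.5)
Your treatment of the norms, of $V\rightharpoonup W$, of the carrier, cross and error parts of the kinetic density, and of the $L^1$ limits of $|V|^2,|V|^4$ matches the paper. The gap is the step you flag yourself. The localization $\int\chi\,\bigl||D|^{1/2}\Phi\bigr|^2\to Q\sum_{j,l}\chi(y_{j,l})$ is asserted, not proved, and the stated reason is inaccurate. Because $|D|^{1/2}$ is nonlocal, $\bigl||D|^{1/2}\Phi\bigr|^2$ is not a sum of squared Poisson kernels. You would still have to show that it carries vanishing mass away from the cores, and that cross terms between poles vanish. Your commutator remark also aims at the wrong target: trading one half-derivative density for another keeps the nonlocality.

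The missing idea is to use the commutator to reach the \emph{local} momentum density. Since $\Phi$ has only nonnegative frequencies ($N\ge K$), $|D|\Phi=D\Phi$. For real smooth $\chi$,
\[
 \int\chi\bigl||D|^{1/2}\Phi\bigr|^2\frac{dy}{2\pi}
 =\operatorname{Re}\int\chi\,\bar\Phi D\Phi\,\frac{dy}{2\pi}
 +O_\chi\bigl(\|\Phi\|_2\|\Phi\|_{\dot H^{1/2}}\bigr).
\]
The momentum density is explicit. For $G=b\delta z/(1-qz)$, with $q=\rho e^{-i\vartheta}$ and $\delta=1-\rho^2$,
\[
 \operatorname{Re}(\bar GDG)=\frac{|b|^2\delta^2\bigl(1-\rho\cos(\theta-\vartheta)\bigr)}{\bigl(1-2\rho\cos(\theta-\vartheta)+\rho^2\bigr)^2}\ge0 .
\]
Its integral is $|b|^2$ and its mass away from $\vartheta$ vanishes. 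For $j\ne k$, the bounds $\|DG_k\|_1=|b_k|$ and $G_j=O(d)$ near the $k$th center give $\bar G_jDG_k\to0$ in $L^1$. The carrier term $-KQ|F|^2$ also tends to zero in $L^1$. This is how the paper proves \eqref{def:momentum} first and then deduces \eqref{def:kinetic}. Your direct route can be completed, but it needs genuine pointwise estimates. For example, you need $|D|^{1/2}G_j=O(d)$ away from its center, via two summations by parts, and Parseval for the total mass.

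There is a second, smaller gap in the momentum density. Because it is signed, positivity cannot upgrade convergence from smooth to continuous test functions. You need $\sup_t\|\operatorname{Re}(\bar VDV)\|_1<\infty$. This does not follow from the $H^{1/2}$ bounds, since $\|DV\|_2\to\infty$. Writing the density as ``the kinetic density minus a positive part'' does not supply it either. The paper obtains it from $\|V\|_\infty\lesssim1$, $\|DG_j\|_1=|b_j|$, and $\|D\mathcal E_{\mathrm{sh}}\|_1\le\|\mathcal E_{\mathrm{sh}}\|_{H^1}\to0$. The same bounds transfer the momentum limit from the profile to $V$ in $L^1$.
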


\begin{proof}
We first compute the Sobolev norms. The partial-fraction decomposition
from Section~\ref{core:construction} is
\[
 F=\sum_{j=1}^JG_j,\qquad
 G_j=\frac{b_jd_jz}{1-q_jz},\qquad
 d_j=1-|q_j|^2=\frac d{v_j}+O(d^2),\quad
 q_j\to\bar\zeta_j,\quad b_j\to\frac{\bar\zeta_j}{p_\infty}.
\]
The oscillatory Riemann sums for distinct poles have vanishing cross
terms after multiplication by $d^{2s-1}$. The diagonal terms equal
$\Gamma(2s+1)d_j^{1-2s}(1+o(1))$ for $s\ge0$.
The carrier shifts Fourier indices by a fixed integer and has no effect
on these leading terms. This proves \eqref{def:subcritical} for the
profile. Interpolation in \eqref{def:errors} gives
$\|\mathcal E_{\mathrm{sh}}\|_{H^s}\lesssim d^{1-s/2}\ell$ for
$0\le s\le1$,
which is lower order. The same argument at $s=1/2$ gives
\eqref{def:criticalgap}. Boundedness in $H^{1/2}$ and strong $L^2$
convergence imply weak convergence in $H^{1/2}$.

We next identify the measures. A single pole
$G=b\delta z/(1-qz)$, with $q=\rho e^{-i\vartheta}$ and
$\delta=1-\rho^2$, satisfies
\[
 \operatorname{Re}(\bar GDG)(\theta)
 =\frac{|b|^2\delta^2(1-\rho\cos(\theta-\vartheta))}
 {(1-2\rho\cos(\theta-\vartheta)+\rho^2)^2}\ge0.
\]
Its integral is $|b|^2$, and its mass outside any fixed neighborhood
of $\vartheta$ tends to zero. Moreover $\|DG\|_1=|b|$.
For distinct poles, $\bar G_jDG_k\to0$ in $L^1$. To see this, choose
disjoint fixed neighborhoods of their limiting centers. Near the $k$th
center, $G_j=O(d)$ and $\|DG_k\|_1$ is bounded. Away from that
neighborhood, $DG_k=O(d)$ and $G_j$ is uniformly bounded. The same
estimate applies on the complement of all the neighborhoods. Consequently
\[
 \operatorname{Re}(\bar FDF)\frac{d\theta}{2\pi}
 \rightharpoonup\sum_j\delta_{\theta_j}.
\]
Composition with $Ny$ and the factor $N$ from differentiation give
one unit mass at each $y_{j,l}$. The carrier contribution to the spike's
momentum is $-KQ|F|^2$, whose $L^1$ norm tends to zero.
Terms linear in $F$ tend weakly to zero by integration by parts and
$F\to0$ in $L^1$; their total variations are uniformly bounded.
The pure carrier contributes $-KM\,dy/(2\pi)$.

To pass from momentum to kinetic density, put
$f=\sqrt Q e^{-iKy}F(e^{iNy})$. Its frequencies are nonnegative because
$N\ge K$, so $|D|f=Df$. For a smooth real test function $a$, commuting
one half derivative through $a$ gives
\[
 \left|\int a\bigl||D|^{1/2}f\bigr|^2
       -\operatorname{Re}\int a\bar fDf\right|
 \lesssim_a\|f\|_2\|f\|_{\dot H^{1/2}}=o(1).
\]
The commutator is bounded on $L^2$: directly in Fourier variables,
$\bigl||n|^{1/2}-|m|^{1/2}\bigr|\le|n-m|^{1/2}$ and
$\sum_k|k|^{1/2}|a_k|<\infty$. The kinetic cross term with the carrier
vanishes because $|D|^{1/2}f\rightharpoonup0$ in $L^2$; the carrier's
kinetic density is $KM$. This proves \eqref{def:kinetic} for the profile.

The error tends to zero in $H^{1/2}$ and the profiles are bounded there,
so the kinetic densities transfer in $L^1$. For momentum density,
use the uniform $L^\infty$ profile bound, its uniform $L^1$ derivative
bound, $\|\mathcal E_{\mathrm{sh}}\|_\infty\to0$ and
$\|D\mathcal E_{\mathrm{sh}}\|_1\to0$ to transfer in $L^1$ as well.
Finally, uniform boundedness in $L^\infty$ and strong $L^2$ convergence
to $W$ prove the assertions about the mass and quartic densities, and
hence \eqref{def:energy}.
\end{proof}

\begin{proposition}[Spike profiles and sharp $L^p$ asymptotics]
\label{prop:spikes}
Choose the continuous branch
$\theta_j(t)=\arg\bar q_j(t)$ converging to $\theta_j$. Set
$y_{j,l}(t)=(\theta_j(t)+2\pi l)/N$, and
\[
 \varepsilon_j(t)=\frac{1-|q_j(t)|}{N}
      \sim\frac{d(t)}{2Nv_j}.
\]
Uniformly for $\xi$ in compact subsets of $\mathbb R$,
\begin{align}
 e^{iKy_{j,l}(t)}V(t,y_{j,l}(t)+\varepsilon_j\xi)
 &\longrightarrow\sqrt Q\left(r_\infty+
                  \frac{2\bar p_\infty}{1-i\xi}\right),
 \label{def:profile}\\
 \sqrt{\varepsilon_j}\,e^{iKy_{j,l}(t)}
 (|D|^{1/2}V)(t,y_{j,l}(t)+\varepsilon_j\xi)
 &\longrightarrow\sqrt{\pi Q}\,
                  \frac{\bar p_\infty}{(1-i\xi)^{3/2}}.
 \label{def:halfprofile}
\end{align}
The power $(1-i\xi)^{-3/2}$ uses the principal logarithm.
For $1<p<\infty$, let
\[
 c_p=\frac{2^{p-2}\Gamma((p-1)/2)}{\sqrt\pi\,\Gamma(p/2)}.
\]
Then
\begin{equation}\label{def:Lpmeasure}
 d^{-1}|V-W|^p\frac{dy}{2\pi}
 \rightharpoonup c_p Q^{p/2}
      \sum_{j,l}\frac1{Nv_j}\delta_{y_{j,l}},
 \qquad \|V-W\|_p^p\sim c_p Q^{p/2}d.
\end{equation}
In contrast,
\[
 \|V-W\|_\infty\to2\sqrt Q,\qquad
 \|V\|_\infty\to\sqrt\alpha+\sqrt Q,\qquad
 \min_y|V(t,y)|\to|\sqrt\alpha-\sqrt Q|.
\]
\end{proposition}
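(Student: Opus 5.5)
The plan is to reduce every assertion to an explicit computation on the profile \eqref{def:shadow}, then show that the shadowing error \eqref{def:errors} is negligible at the spike scale. Near the spike we split $F=G_j+\sum_{k\ne j}G_k$. On a window of width $O(\varepsilon_j)$ around $y_{j,l}(t)$ the other poles contribute $O(d)$ uniformly, because their limiting centers are separated. For the error, interpolation between the Wiener bound and the $\mathcal A_2$ bound gives $\|\mathcal E_{\mathrm{sh}}\|_\infty\le\|\mathcal E_{\mathrm{sh}}\|_\W\lesssim d\ell\to0$. This already suffices for \eqref{def:profile} and for all the $L^\infty$ and $L^p$ statements. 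For \eqref{def:halfprofile} we need $\sqrt{\varepsilon}\,\||D|^{1/2}\mathcal E_{\mathrm{sh}}\|_\infty\to0$. We bound $\sum|n|^{1/2}|\widehat{\mathcal E}_n|$ by splitting at $|n|\sim d^{-1}$: the low part is at most $d^{-1/2}\cdot d\ell$, and the high part is at most $\ell\sum_{n>1/d}n^{-3/2}\lesssim\ell d^{1/2}$. After multiplying by $\sqrt d$ this gives $O(d\ell)\to0$.

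\emph{Explicit spike.} Write $z=e^{iNy}$. At $y=y_{j,l}(t)+\varepsilon_j\xi$ we have $z=\bar q_j/|q_j|\cdot e^{iN\varepsilon_j\xi}$. Hence $1-q_jz=1-|q_j|e^{iN\varepsilon_j\xi}=N\varepsilon_j(1-i\xi)+O(\varepsilon_j^2)$. The numerator is $b_jd_jz\to b_{j,\infty}\zeta_j\cdot2N\varepsilon_j=2N\varepsilon_j\bar p_\infty^{-1}\cdots$, and $b_{j,\infty}\zeta_j=\bar\zeta_j\zeta_j/p_\infty=1/p_\infty=\bar p_\infty/|p_\infty|^2$. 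Here we use $d_j\sim2N\varepsilon_j$ and $|p_\infty|=1$ (since $x_*^2+b^2=1$). This gives $G_j\to2\bar p_\infty/(1-i\xi)$. The factor $e^{-iKy}$ is cancelled by $e^{iKy_{j,l}}$ up to $O(\varepsilon)$. This proves \eqref{def:profile}.

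For the half-derivative we act on Fourier coefficients: $|D|^{1/2}$ applied to $e^{-iKy}G_j(e^{iNy})$ has coefficients $(Nn-K)^{1/2}b_jd_jq_j^{n-1}$, and the carrier shift is negligible at $n\sim d^{-1}$. The sum $\sum_n(Nn)^{1/2}d_j(q_jz)^{n}$ is a Riemann sum at scale $n\sim1/(N\varepsilon_j)$. Evaluating it with $\sum_n n^{1/2}w^n\approx\Gamma(3/2)(1-w)^{-3/2}$ gives $\sqrt{N}\,d_j\,\Gamma(3/2)(N\varepsilon_j(1-i\xi))^{-3/2}$, which equals $\varepsilon_j^{-1/2}\cdot2\cdot\tfrac{\sqrt\pi}{2}(1-i\xi)^{-3/2}$ times the phase $\bar p_\infty$. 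The constant $r_\infty$ and the cross-pole terms contribute $O(1)$ before the factor $\sqrt{\varepsilon_j}$ is applied, so they vanish in the limit. The constants in this computation are the part I expect to need the most care.

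\emph{$L^p$ and sup norms.} Away from the windows $|y-y_{j,l}|\le R\varepsilon_j$ we have $|V-W|\lesssim\varepsilon_j/|y-y_{j,l}|+d\ell$. For $p>1$ this tail contributes $O(R^{1-p}d)$ to $\int|V-W|^p$, which is uniformly small relative to $d$ once $R$ is large. Inside the windows, rescaling $y=y_{j,l}+\varepsilon_j\xi$ and using \eqref{def:profile} (with $V-W\to2\sqrt Q\bar p_\infty/(1-i\xi)$, up to a unimodular phase) gives a contribution
\[
\frac{\varepsilon_j}{2\pi}(2\sqrt Q)^p\int_{\mathbb R}(1+\xi^2)^{-p/2}\,d\xi=\frac{\varepsilon_j}{2\pi}2^pQ^{p/2}\sqrt\pi\,\frac{\Gamma((p-1)/2)}{\Gamma(p/2)}.
\]
With $\varepsilon_j\sim d/(2Nv_j)$ this equals $c_pQ^{p/2}d/(Nv_j)$, which proves the weak limit \eqref{def:Lpmeasure}. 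Summing over $l$, which supplies a factor $N$, and over $j$ with $\sum_j 1/v_j=1$, gives the norm asymptotic. The identity $\sum_j1/v_j=1$ follows from $\Psi(0)=0$: the boundary preimages of a point under a Blaschke product satisfy $\sum_j1/|\Psi'(\zeta_j)|=\Re\frac{1+\overline{w}\Psi(0)}{1-\overline{w}\Psi(0)}=1$.

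For the sup norms, $V-W$ is uniformly approximated by the explicit profile. The maximum of $|2\bar p_\infty/(1-i\xi)|$ is $2$, attained at $\xi=0$, while away from the spikes $V-W\to0$; hence $\|V-W\|_\infty\to2\sqrt Q$. Next, $|V|$ is given by the limit $\sqrt Q|r_\infty+2\bar p_\infty/(1-i\xi)|$. The map $\xi\mapsto 2/(1-i\xi)$ traces the circle $|w-1|=1$, so $r_\infty+\bar p_\infty\cdot2/(1-i\xi)$ traces the circle centered at $r_\infty+\bar p_\infty=C$ with radius $1$. Its modulus therefore ranges over $[|C-1|,C+1]$. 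Multiplying by $\sqrt Q$ gives the range $[|\sqrt\alpha-\sqrt Q|,\sqrt\alpha+\sqrt Q]$. The value $\sqrt M=\sqrt Q|r_\infty|$ lies inside this range, being the value at $\xi\to\infty$. It remains to check that the minimum is attained within the spike windows and not far away; this holds by the uniform tail bound. This proves the last two limits.
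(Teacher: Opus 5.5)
Your proposal is correct and takes essentially the same route as the paper: partial-fraction evaluation at the spike scale, the Riemann sum $\sum n^{1/2}w^n\approx\Gamma(3/2)(1-w)^{-3/2}$ for the half-derivative, core integration with the tail bound $d/(d+|y-y_{j,l}|)$ for the $L^p$ limits, and the Clark identity $\sum_j v_j^{-1}=1$ (which you cite via the Clark-measure mass formula and the paper derives from the Herglotz function). The differences are minor. The paper gets the sup and min limits directly from $\sup|F|=d/(1-|p|)$ and from $r+F=C+\beta$ with $\beta$ inner, rather than from your window-plus-tail argument. Your Wiener-based split at $|n|\sim d^{-1}$ gives the sharper bound $\||D|^{1/2}\mathcal E_{\mathrm{sh}}\|_\infty\lesssim d^{1/2}\ell$, compared with the paper's $d^{1/4}\ell$; both suffice.
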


\begin{proof}
At the $j$th pole, substitute $Ny=\theta_j(t)+(1-|q_j|)\xi$
into its partial fraction. Since
$b_je^{i\theta_j(t)}\to\bar p_\infty$, this term tends to
$2\bar p_\infty/(1-i\xi)$. All other terms are $O(d)$ there.
Equation \eqref{def:profile} follows from \eqref{def:errors}.
For \eqref{def:halfprofile}, write $\delta=1-|q_j|=N\varepsilon_j$
and use the Riemann sum
\[
 \delta^{3/2}\sum_{m\ge1}\sqrt{m-K/N}\,
 |q_j|^{m-1}e^{im\delta\xi}
 \longrightarrow\Gamma(3/2)(1-i\xi)^{-3/2}.
\]
The convergence is uniform on compact $\xi$ intervals by exponential
domination. Since $d_j\sim2\delta$, the prefactor is
$2\Gamma(3/2)\sqrt Q=\sqrt{\pi Q}$. Contributions from the other poles
are lower order: away from their resonant angles the weighted series
are uniformly bounded by two summations by parts. The error also
vanishes after rescaling; indeed \eqref{def:errors} implies
\[
 |\widehat{\mathcal E_{\mathrm{sh}}}(n)|\lesssim
 \min(d\ell,\ell\langle n\rangle^{-2}),\qquad
 \||D|^{1/2}\mathcal E_{\mathrm{sh}}\|_\infty
 \lesssim d^{1/4}\ell=o(1),
\]
by splitting the sum at $|n|\simeq d^{-1/2}$.
Thus the universal critical core density is
$\pi Q(1+\xi^2)^{-3/2}$; its integral with the rescaled normalized
measure is $Q$.

For \eqref{def:Lpmeasure}, integrate the first profile over each core.
In a local coordinate centered at $y_{j,l}(t)$, the tails satisfy
$|G_j(e^{iNy})|\lesssim d/(d+|y-y_{j,l}(t)|)$.
Also $\int_{\mathbb R}(1+\xi^2)^{-p/2}d\xi
=\sqrt\pi\Gamma((p-1)/2)/\Gamma(p/2)$.
Each core therefore contributes $c_pQ^{p/2}d/(Nv_j)$.
The perturbation is $O(d\ell)=o(d^{1/p})$ in $L^p$; the standard
inequality for the difference of $p$th powers transfers the measure
limit. To sum the weights, observe that
\begin{equation}\label{def:clarkidentity}
 \sum_jv_j^{-1}=1.
\end{equation}
For completeness, the rational function
$H=(\bar p_\infty+\Psi)/(\bar p_\infty-\Psi)$ has positive real part,
poles $\zeta_j$, and residue $-2\zeta_j/v_j$ at each pole.
Subtracting $\sum_jv_j^{-1}(\zeta_j+z)/(\zeta_j-z)$ removes all poles;
the difference is a purely imaginary constant by the boundary values
and the maximum principle for its real part. Taking real parts at
zero, where $H(0)=1$, proves \eqref{def:clarkidentity}.
Finally $\sup|F|=d/(1-|p|)=1+|p|\to2$.
The inner function $\beta=(\Psi-\bar p)/(1-p\Psi)$ maps the circle
onto itself, so $\sqrt Q(C+\beta)$ has maximum modulus
$\sqrt\alpha+\sqrt Q$ and minimum modulus
$|\sqrt\alpha-\sqrt Q|$. Uniform smallness of the error proves the
three supremum and infimum limits.
\end{proof}

\begin{corollary}[Density of attainable critical defects on one shell]
\label{cor:defectdensity}
On the single invariant surface $(M,P,E)=(2,0,3)$, the set of limiting
critical kinetic defect measures of the constructed cascading solutions
is weak-$*$ dense in the positive Borel measures on $\T$ of total mass $2$.
\end{corollary}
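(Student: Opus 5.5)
The plan is to combine Corollary~\ref{core:same-shell} with Proposition~\ref{prop:defects} and then use a standard approximation of positive measures by finitely supported measures with equal weights. On $(M,P,E)=(2,0,3)$, Corollary~\ref{core:same-shell} uses $K=N=1$, $M=2$, $Q=2/J$, and an arbitrary degree-$J$ Blaschke product $\Psi$ with $\Psi(0)=0$. By Corollary~\ref{core:geometry}, the concentration points $\zeta_j=e^{i\theta_j}$ may be any $J$ distinct points of the circle. With $N=1$, Proposition~\ref{prop:defects} identifies the limiting critical kinetic defect, namely the part of the weak limit \eqref{def:kinetic} beyond the absolutely continuous carrier density $KM\,dy/(2\pi)$. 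This defect is
\[
 \frac2J\sum_{j=1}^J\delta_{\theta_j},
\]
with $\theta_j$ distinct but otherwise arbitrary. Hence the attainable set contains every normalized empirical measure $\frac2J\sum_j\delta_{\theta_j}$ with distinct atoms, for every $J\ge1$. Note that the widths $w_j$ play no role here, since each point carries mass $Q$ regardless of width.

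It then remains to show that such equal-weight, distinct-atom measures are weak-$*$ dense among positive measures of mass $2$ on $\T$. Fix a positive measure $\mu$ with $\mu(\T)=2$, finitely many continuous test functions $\varphi_1,\dots,\varphi_r$, and $\varepsilon>0$. The first step is to approximate $\mu$ by a finitely supported measure $\sum_i c_i\delta_{x_i}$ with $\sum_i c_i=2$. One partitions $\T$ into arcs of length less than $\rho$, where $\rho$ comes from the uniform continuity of the $\varphi_k$, and places the mass of each arc at a point inside it. The second step is to replace the weights by rationals with common denominator $J$: choose integers $m_i\ge0$ with $\sum_i m_i=J$ and $|2m_i/J-c_i|\le 2/J$. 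The third step is to split each atom $\frac{2m_i}{J}\delta_{x_i}$ into $m_i$ atoms of mass $2/J$ at distinct points within distance $\rho$ of $x_i$, so that all $J$ points are distinct.

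Each of these steps changes $\int\varphi_k\,d\mu$ by at most $\varepsilon/3$, provided $\rho$ is small and $J$ is large. The second step costs at most $(\#\text{arcs})\cdot\frac2J\max_k\|\varphi_k\|_\infty$, which is small once $J$ is large compared with the number of arcs. Since the weak-$*$ topology on measures of bounded mass has a basis of neighborhoods defined by finitely many test functions, this gives density.

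The only genuinely delicate point is the bookkeeping in the first paragraph: making sure the "critical kinetic defect measure" is exactly the atomic part $Q\sum_{j,l}\delta_{y_{j,l}}$ of \eqref{def:kinetic}, rather than the full limit including $KM\,dy/(2\pi)=2\,dy/(2\pi)$. If it were meant to include the carrier, the total mass would be $4$ instead of $2$, so the mass-$2$ normalization in the statement confirms the atomic interpretation. Distinctness of the points is needed both for Corollary~\ref{core:geometry} and because Proposition~\ref{prop:defects} gives no uniformity as points coalesce. This is harmless, since density only requires some nearby configuration with distinct points.
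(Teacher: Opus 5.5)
Your proposal is correct and is essentially the paper's proof. You specialize to $K=N=1$, $M=2$, $Q=2/J$ with arbitrary distinct centers, read off the defect $(2/J)\sum_{j}\delta_{\theta_j}$ from Proposition~\ref{prop:defects}, and conclude by weak-$*$ density of equal-weight empirical measures with distinct atoms; your partition, rounding and splitting steps just spell out the paper's one-line approximation argument.
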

\begin{proof}
Take $K=N=1$, $M=2$, $Q=2/J$. Arbitrary distinct centers are allowed,
and Proposition~\ref{prop:defects} gives the defect
$\mu_J=(2/J)\sum_{j=1}^J\delta_{\theta_j}$.
Equal-weight empirical measures at distinct points are weak-$*$ dense
in probability measures: approximate masses on a fine partition by
rational proportions with common denominator $J$, and place the
corresponding distinct points inside each cell. Refining the partition
and denominator proves the assertion.
\end{proof}

Each empirical measure in the proof is realized by a separate solution.
The weak-$*$ closure describes attainable spatial distributions across
the family; it does not assert that a single solution has a diffuse
defect measure.

\appendix
\section{Sharp invariant restrictions and optimal growth}
\label{sec:invariants}

We conclude the analysis of the explicit family by determining its sharp
invariant restrictions and optimal growth constants.
Throughout this appendix, $L=NJ$ denotes the total number of peaks,
not the multiplier used in Section~\ref{core:construction}. Thus
$L\ge K$, and
\begin{equation}\label{inv:parameters}
 \Lambda^2=8KQ-(2K+Q-M)^2>0,\qquad
 P=LQ-KM,\qquad
 E=\frac{LQ+KM}{2}+\frac{M^2}{4}.
\end{equation}
The logarithmic growth rate of the $H^s$ norm is
$\Lambda(s-\tfrac12)$ for $\tfrac12<s<\tfrac32$.

\begin{proposition}[Sharp energy floor]\label{inv:floor}
At every fixed mass $M>0$, the infimum of the energies of the constructed
cascades is
\begin{equation}\label{inv:massfloor}
 e_*(M)=\frac{M^2}{4}+M+1-\sqrt{2M}.
\end{equation}
This infimum is not attained. Let $x_0$ be the unique positive solution of
$x_0^3+4x_0-4=0$. The set of energies attained by the whole family is exactly
$(E_*,\infty)$, where
\begin{equation}\label{inv:absolutefloor}
 E_*=1+\frac{x_0^2}{2}+\frac{x_0^4}{16}-x_0
     =0.5438713443\ldots.
\end{equation}
In particular, no sequence from this family converges to zero in
$H^{1/2}(\T)$.
\end{proposition}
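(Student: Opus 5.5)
The plan is to reduce the statement to an elementary optimization over the parameters in \eqref{inv:parameters}. The energy is computed by \eqref{core:invariants}, so the task is to optimize over four discrete or continuous choices. The integers are $1\le K\le N$ and $J\ge1$, so $L=NJ$. The continuous parameters $M,Q>0$ are constrained by admissibility. Every integer $L\ge K$ is realized, for instance by $J=1$, $N=L$. The condition $\Lambda^2>0$ is equivalent to $|2K+Q-M|<2\sqrt{2KQ}$, that is,
\[
 (\sqrt Q-\sqrt{2K})^2<M<(\sqrt Q+\sqrt{2K})^2 ,
\]
or equivalently $\sqrt Q>|\sqrt M-\sqrt{2K}|$, together with the upper bound $\sqrt Q<\sqrt M+\sqrt{2K}$.

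\emph{Step 1: the floor at fixed mass.} Since $E$ is increasing in $L$, and $L$ is only subject to $L\ge K$, we first take $L=K$. This gives $E=K(Q+M)/2+M^2/4$, and it remains to minimize over $Q$. The constraint $\sqrt Q>|\sqrt M-\sqrt{2K}|$ is open, and $Q=(\sqrt M-\sqrt{2K})^2$ itself is inadmissible. Hence the infimum over $Q$ is
\[
 \frac{M^2}{4}+f_M(K),\qquad f_M(K)=K\bigl(M+K-\sqrt{2KM}\bigr),
\]
and it is not attained. When $M=2K$ the infimum is approached as $Q\downarrow0$, and the same formula holds.

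It remains to show $f_M(K)\ge f_M(1)=M+1-\sqrt{2M}$ for integers $K\ge1$. I regard this as the only nontrivial inequality. I would treat $K$ as a real variable and differentiate:
\[
 \partial_Kf_M=2K+M-\tfrac32\sqrt{2M}\,K^{1/2}.
\]
By AM--GM, $2K+M\ge2\sqrt{2KM}$, which is at least $\tfrac32\sqrt{2KM}$. So $f_M$ is increasing on $[1,\infty)$, and the minimum over integers is at $K=1$. This gives \eqref{inv:massfloor}. Non-attainment follows because every bound in the chain above is strict or unattained.

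\emph{Step 2: the absolute floor and the exact range.} Minimize $e_*(M)$ over $M>0$ by substituting $M=x^2/2$ with $x>0$. This gives
\[
 e_*=\frac{x^4}{16}+\frac{x^2}{2}+1-x,
\]
whose derivative is $\tfrac14(x^3+4x-4)$. The function is strictly convex in $x$, since its second derivative $\tfrac34x^2+1$ is positive. Hence the unique critical point $x_0$ is the minimum, and its value is \eqref{inv:absolutefloor}. No energy equals $E_*$, because each $e_*(M)$ is unattained.

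For surjectivity onto $(E_*,\infty)$, fix $K=L=1$ and use the admissible set $\Omega_1$. It is connected, being the image of a diffeomorphism as in the proof of Theorem~\ref{tailgen:theorem}. The energy $E=(Q+M)/2+M^2/4$ is continuous on $\Omega_1$. Its infimum there is $E_*$, approached near $M=x_0^2/2$, $Q\to(\sqrt M-\sqrt2)^2$. It is unbounded above as $M\to\infty$. By connectedness its image is an interval, so the image is exactly $(E_*,\infty)$.

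\emph{Step 3: the consequence.} Energy is continuous on bounded subsets of $H^{1/2}$, using $H^{1/2}\subset L^4$, and it vanishes at $0$. A sequence converging to zero in $H^{1/2}$ would therefore have energies tending to zero, contradicting $E>E_*>0$. The main care lies in Step 1: the monotonicity in $L$, the correct branch of the absolute value in the $Q$-constraint (for $M<2K$ versus $M>2K$), and the monotonicity of $f_M$ in $K$, which is the step most likely to hide an error.
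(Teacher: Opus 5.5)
Your proposal is correct and follows the paper's argument essentially step by step. It uses the lower bound from $L\ge K$ and $Q>q_-(M,K)$, monotonicity in $K$ by differentiation, minimization of $e_*$ through $x=\sqrt{2M}$, connectedness of the $K=N=J=1$ parameter domain for the exact range, and continuity of the energy in $H^{1/2}$. The differences are cosmetic: you verify $\partial_K f_M>0$ by AM--GM rather than by the paper's quadratic in $\sqrt{K/M}$. You also get connectedness of $\Omega_1$ from the diffeomorphism in the proof of Theorem~\ref{tailgen:theorem} rather than from vertical intervals containing $Q=M+2$, and your remark that an interval with unattained infimum $E_*$ and no upper bound is $(E_*,\infty)$ makes the paper's openness remark unnecessary.
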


\begin{proof}
The condition $\Lambda^2>0$ is equivalent to
\begin{equation}\label{inv:qinterval}
 q_-(M,K)<Q<q_+(M,K),\qquad
 q_\pm(M,K)=(\sqrt M\pm\sqrt{2K})^2.
\end{equation}
Since $L\ge K$, it follows that
\[
 E>\frac{M^2}{4}+KM+K^2-K\sqrt{2KM}=:e_K(M).
\]
The inequality is strict also when $q_-=0$. For real $K>0$,
\[
 \partial_K e_K(M)=M+2K-\frac32\sqrt{2MK}>0.
\]
Indeed, after division by $M$ and the substitution $y=\sqrt{K/M}$,
the expression is $1+2y^2-\frac{3\sqrt2}{2}y$, a positive quadratic.
Thus $E>e_1(M)=e_*(M)$. Sharpness follows by taking
$K=N=J=1$ and letting $Q\downarrow q_-(M,1)$ through the admissible interval.
The derivative of $e_*(M)$ vanishes precisely when
$x=\sqrt{2M}$ solves $x^3+4x-4=0$. This gives the unique global minimum
\eqref{inv:absolutefloor}, at $M=x_0^2/2$.

For $K=N=J=1$, the admissible $(M,Q)$ domain is connected: each of its
vertical intervals contains the graph $Q=M+2$. Its energy image is an
interval, is open because $\partial_Q E=1/2$, is unbounded above, and has
infimum $E_*$. It is therefore $(E_*,\infty)$. Finally the energy is
continuous at zero in $H^{1/2}(\T)$, since
$H^{1/2}(\T)\hookrightarrow L^4(\T)$. The positive energy floor excludes
convergence to zero in this space.
\end{proof}

\begin{proposition}[Optimal energy--growth constant]\label{inv:rate}
Let $c_\star$ be the unique root in $(4,5)$ of
\[
 c_\star^3-256c_\star+1024=0,
 \qquad c_\star=4.3135109824\ldots.
\]
Every member of the family satisfies
\begin{equation}\label{inv:ratebound}
 \Lambda^2\le c_\star E.
\end{equation}
The constant is optimal and equality is attained. More precisely, set
\[
 m_*=\frac{16}{c_\star}-2,\qquad D_*=2-\frac{c_\star}4,
 \qquad q_*=m_*+D_*=\frac{16}{c_\star}-\frac{c_\star}4.
\]
Equality holds exactly for
\begin{equation}\label{inv:rateequality}
 N=K,\qquad J=1,\qquad M=Km_*,\qquad Q=Kq_*.
\end{equation}
Consequently the maximal ratio, within this family, of squared
exponential growth rate to energy is $c_\star(s-\tfrac12)^2$ at every
$\tfrac12<s<\tfrac32$.
\end{proposition}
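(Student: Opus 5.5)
The plan is to reduce the statement to a two-variable concave optimization after removing the discrete parameters. In \eqref{inv:parameters} the energy is strictly increasing in $L=NJ$, while $\Lambda$ does not depend on $L$. Since $L\ge K$, every member therefore satisfies $\Lambda^2/E\le\Lambda^2/E|_{L=K}$, with strict inequality unless $L=K$. Because $N\ge K$ and $J\ge1$, the case $L=K$ forces $N=K$ and $J=1$. Next we rescale $M=Km$ and $Q=Kq$. Both $\Lambda^2=K^2[8q-(2+q-m)^2]$ and $E|_{L=K}=K^2[(q+m)/2+m^2/4]$ are homogeneous of degree two in $K$. Hence the ratio is
\[
 f(m,q)=\frac{8q-(2+q-m)^2}{(q+m)/2+m^2/4},
\]
independent of $K$, considered on the admissible set $\{m,q>0,\ 8q>(2+q-m)^2\}$.

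To bound $f$ we fix $c>0$ and study
\[
 g_c(m,q)=8q-(2+q-m)^2-c\Bigl(\frac{q+m}2+\frac{m^2}4\Bigr).
\]
The inequality $\Lambda^2\le cE$ on the family is equivalent to $g_c\le0$ there, since the denominator is positive. The quadratic part of $g_c$ is $-(q-m)^2-cm^2/4$, which is negative definite, so $g_c$ is strictly concave on $\mathbb R^2$. Its unique critical point is therefore its global maximum. Writing $D=q-m$, the condition $\partial_qg_c=0$ reads $8-2(2+D)-c/2=0$, which gives $D=2-c/4$. Substituting this into $\partial_mg_c=0$, i.e.\ $2(2+D)-c/2-cm/2=0$, gives $m=16/c-2$. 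These are exactly $D_*$ and $m_*$, and $q=m+D$ reproduces $q_*$.

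The main computation is the maximal value $G(c)=g_c(m(c),q(c))$. We plan to substitute the critical point and clear denominators, which should give $G(c)=-(c^3-256c+1024)/(\text{positive multiple of }c)$ or an equivalent expression. As a cross-check, by the envelope theorem $G'(c)=-E(m(c),q(c))/K^2<0$ whenever the critical point has positive energy, so $G$ is strictly decreasing there. Consequently $\sup f\le c$ exactly when $G(c)\le0$, and the optimal constant is the value of $c$ at which $G$ changes sign. We must confirm that this is the root $c_\star\in(4,5)$ of the stated cubic, and not one of its other roots: the cubic has one negative root and two positive roots, and we must check the sign of $G$ on the relevant interval. I expect this algebra, together with identifying the correct root, to be the main obstacle.

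It then remains to verify that the maximizer is admissible. For $c=c_\star\in(4,5)$ we have $m_*=16/c_\star-2\in(1.2,2)$, so $m_*>0$, and $D_*=2-c_\star/4>0$ gives $q_*>m_*>0$. At this point $\Lambda^2=c_\star E>0$, so the admissibility condition \eqref{core:admissible} holds, and $N=K$, $J=1$, $M=Km_*$, $Q=Kq_*$ is a genuine member of the family with equality. For uniqueness of the equality case: strict concavity gives $g_{c_\star}<0$ away from $(m_*,q_*)$, and any $L>K$ already gives strict inequality by the first reduction, which yields \eqref{inv:rateequality}. The final sentence of the proposition follows because the $H^s$ growth rate is $\Lambda(s-\tfrac12)$, so its square divided by $E$ is $(s-\tfrac12)^2\Lambda^2/E\le c_\star(s-\tfrac12)^2$, with equality attained.
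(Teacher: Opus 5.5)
Your proposal is correct, and in substance it is the paper's argument, derived rather than verified. The paper writes down the identity
\[
 c_\star E-\Lambda^2=(Q-M-KD_*)^2+\frac{c_\star}{4}(M-Km_*)^2+\frac{c_\star}{2}(L-K)Q
\]
and checks it by matching coefficients. The constant terms agree exactly because $c_\star$ solves the cubic, and nonnegativity of the three terms gives the bound and the equality case at once. Your two reductions produce precisely this identity. The last term is your monotonicity in $L$. The first two are $K^2$ times the expansion of your strictly concave quadratic about its critical point, $-g_c(m,q)=-G(c)+(q-m-D_*)^2+\frac c4(m-m_*)^2$. Your route explains where $m_*$, $D_*$ and the cubic come from; the paper's is a shorter verification.

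The computation you left open is immediate and removes the obstacle you anticipated. However, the sign you predicted is wrong:
\[
 G(c)=\frac{c^2}{16}-16+\frac{64}{c}=\frac{c^3-256c+1024}{16c}.
\]
With a minus sign, $G$ would increase through $c_\star$, contradicting your own envelope identity. Indeed $G'(c)=c/8-64/c^2$ is minus the rescaled energy at the critical point.

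Hence $G(c_\star)=0$, and strict concavity alone gives $g_{c_\star}\le0$ on $\mathbb R^2$, with equality only at $(m_*,q_*)$. No sign-change argument is needed.

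You should also drop the claim that $\sup f\le c$ exactly when $G(c)\le0$. For $c$ beyond the second positive root ($\approx13.6$) one again has $G(c)>0$, because the unconstrained maximizer then has negative energy, although the bound holds trivially.

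Every root of the cubic makes $G$ vanish. The root in $(4,5)$ is therefore singled out only by admissibility of the critical point ($m_*>0$), which you already check. The other positive root gives $m<0$, and a negative $c$ destroys concavity.
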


\begin{proof}
Completing squares gives the identity
\begin{equation}\label{inv:squares}
 c_\star E-\Lambda^2
 = (Q-M-KD_*)^2+\frac{c_\star}4(M-Km_*)^2
   +\frac{c_\star}2(L-K)Q.
\end{equation}
For completeness, expanding the left side gives
\[
 (Q-M)^2+\frac{c_\star}4M^2+4K^2
 +\left(\frac{c_\star}2-4\right)KM
 +\left(\frac{c_\star L}{2}-4K\right)Q.
\]
The coefficients on the right agree because
\[
 -2D_*=\frac{c_\star}2-4,\qquad
 2D_*-\frac{c_\star}2m_*=\frac{c_\star}2-4,\qquad
 D_*^2+\frac{c_\star}4m_*^2=4.
\]
The last identity is equivalent to the cubic equation for $c_\star$.
All terms in \eqref{inv:squares} are nonnegative, proving
\eqref{inv:ratebound}. Equality requires $L=K$, $M=Km_*$ and
$Q-M=KD_*$. Since $L=NJ$ with $N\ge K$, this is precisely
\eqref{inv:rateequality}. These parameters are admissible:
$m_*>0$ and $q_*>0$, while \eqref{inv:squares} gives
$\Lambda^2=c_\star E>0$. Thus the bound is attained by genuine cascades.
\end{proof}

\begin{proposition}[Peak counts on a fixed invariant surface]
\label{inv:peakcount}
Fix $(M,P,E)$ with $M>0$. Its realizations by the constructed family have
at most one carrier index, namely
\begin{equation}\label{inv:carrier}
 K=\frac{E-P/2-M^2/4}{M}.
\end{equation}
Such a realization exists if and only if $K$ is a positive integer,
$R:=P+KM>0$, and there is an integer $L\ge K$ satisfying
\begin{equation}\label{inv:countinterval}
 \frac{R}{q_+(M,K)}<L<\frac{R}{q_-(M,K)}.
\end{equation}
When $q_-=0$, the upper inequality is omitted. Every such $L$ is an
attainable total number of concentration points.

The total peak count is unbounded on this invariant surface if and only if
\begin{equation}\label{inv:unboundedcounts}
 M=2K,\qquad P>-2K^2,\qquad E=\frac P2+3K^2
\end{equation}
for a positive integer $K$. If $K=1$, every admissible peak count in
\eqref{inv:countinterval} permits arbitrary distinct peak positions and
arbitrary positive relative widths.
\end{proposition}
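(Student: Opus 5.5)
The plan is to read everything off the explicit invariant formulas \eqref{inv:parameters}. These give $E-P/2-M^2/4=KM$ for every member of the family, with $L=NJ$. Since $M>0$, the carrier index is forced to be \eqref{inv:carrier}, which gives uniqueness. Necessity of the listed conditions then follows in three steps.
\begin{enumerate}
\item $K$ must be a positive integer by construction.
\item The momentum identity gives $R=P+KM=LQ$, so $R>0$ because $L\ge1$ and $Q>0$.
\item $Q=R/L$ must lie in the admissible interval \eqref{inv:qinterval}, namely $q_-(M,K)<Q<q_+(M,K)$. Inverting this in $L$ gives \eqref{inv:countinterval}. When $q_-=0$ the lower constraint $Q>0$ is automatic, so the upper inequality on $L$ disappears.
\end{enumerate}
The constraint $L\ge K$ comes from $L=NJ\ge N\ge K$.

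For sufficiency, take $K$ from \eqref{inv:carrier} and an integer $L\ge K$ satisfying \eqref{inv:countinterval}, and put $Q=R/L$. Then $\Lambda^2>0$, and $\alpha=2K$. Choose $N=L$ and $J=1$, which is allowed since $N=L\ge K$, and take $\Psi(z)=z$. Theorem~\ref{core:main} then produces a solution, and \eqref{core:invariants} returns the prescribed $(M,P,E)$. Hence every such $L$ is an attainable total peak count.

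For the unboundedness criterion, the admissible counts form the integers $L\ge K$ in the open interval \eqref{inv:countinterval}. This set is infinite only if the interval is unbounded above, which forces $q_-(M,K)=(\sqrt M-\sqrt{2K})^2=0$, i.e. $M=2K$. Conversely, if $M=2K$ and $R>0$, every sufficiently large $L$ qualifies. Substituting $M=2K$ into $R>0$ gives $P>-2K^2$. Substituting it into \eqref{inv:carrier} gives $E=P/2+M^2/4+KM=P/2+3K^2$. This is exactly \eqref{inv:unboundedcounts}.

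Finally, for $K=1$ we may instead take $N=1$ and $J=L$, which is legitimate because $N\ge K=1$. Corollary~\ref{core:geometry} then supplies a Blaschke product with arbitrary distinct centers and arbitrary relative widths. Its degree is $J$, so the total peak count is $NJ=L$, and the invariants are unchanged because they depend only on $L=NJ$.

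The only delicate point is bookkeeping. One must check the strict inequalities and the degenerate case $q_-=0$, and confirm that the choice of $(N,J)$ never violates $N\ge K$. No analytic step should be difficult.
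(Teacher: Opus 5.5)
Your proposal is correct and follows the paper's proof essentially step for step. Like the paper, you eliminate $LQ$ from \eqref{inv:parameters} to force $K$ and get $R=LQ$, and you substitute $Q=R/L$ into \eqref{inv:qinterval}. You realize each admissible $L$ by $N=L$, $J=1$ (or by $N=1$, $J=L$ with Corollary~\ref{core:geometry} when $K=1$), and you obtain unboundedness exactly when $q_-(M,K)=0$, i.e.\ $M=2K$, which converts $R>0$ and \eqref{inv:carrier} into \eqref{inv:unboundedcounts}.
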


\begin{proof}
Eliminating $LQ$ from \eqref{inv:parameters} gives
\eqref{inv:carrier} and $LQ=R$. Substitution of $Q=R/L$ into
\eqref{inv:qinterval} yields exactly \eqref{inv:countinterval}.
Conversely, an admissible integer $L\ge K$ is realized by $N=L$, $J=1$,
and $Q=R/L$. For $K=1$ one may instead take $N=1$, $J=L$ and apply
Corollary~\ref{core:geometry}.

If $M\ne2K$, then $q_->0$ and \eqref{inv:countinterval} bounds $L$ above.
If $M=2K$, then $q_-=0$, $q_+=8K$, and all sufficiently large integers
$L$ are allowed. The relations $R>0$ and \eqref{inv:carrier} become
\eqref{inv:unboundedcounts}. One can also obtain an unbounded number of
free geometric parameters while keeping $N=K$ fixed: take $J$ arbitrarily
large subject to $J>R/(8K^2)$ and set $Q=R/(KJ)$.
\end{proof}

On a fixed invariant surface the rate for a given total peak count is
\begin{equation}\label{inv:ratecount}
 \Lambda_L^2=-(M-2K)^2+\frac{2(M+2K)R}{L}-\frac{R^2}{L^2}.
\end{equation}
On the surfaces allowing unbounded peak counts, this yields
$\Lambda_L\sim\sqrt{8KR}\,L^{-1/2}$ as $L\to\infty$. Thus arbitrarily
large peak counts on a fixed invariant surface are accompanied by
vanishing exponential growth rates within this family.

\end{document}